\theoremstyle{plain}
 \numberwithin{equation}{section}
\newtheorem{theorem}{Theorem}[section]
\newtheorem{proposition}[theorem]{Proposition}
\newtheorem{lemma}[theorem]{Lemma}
\newtheorem{porism}[theorem]{Porism}
\theoremstyle{definition}
\newcommand{\appsection}[1]{\let\oldthesection\thesection
\renewcommand{\thesection}{Appendix \oldthesection}
\section{#1}\let\thesection\oldthesection}
\newtheorem{definition}[theorem]{Definition}
\newtheorem{notation}[theorem]{Notation}
\theoremstyle{remark}
\newtheorem{remark}[theorem]{Remark}
\newtheorem{equations}[theorem]{Equations}
\DeclareMathOperator{\Spf}{Spf}
\DeclareMathOperator{\Pic}{Pic}
\def\oK{{\bar{K}}}
\def\Z{{\mathbb{Z}}}
\def\bS{{\mathbb{S}}}
\def\bX{{\mathbb{X}}}
\def\Q{{\mathbb{Q}}}
\def\C{{\mathbb{C}}}
\def\P{{\mathbb{P}}}
\def\A{{\mathbb{A}}}
\def\E{{\mathcal{E}}}
\def\cR{{\mathcal{R}}}
\def\m{{\mathfrak{m}}}
\def\n{{\mathfrak{n}}}
\def\cC{{\mathcal{C}}}
\def\O{{\mathcal{O}}}
\def\cH{{\mathcal{H}}}
\def\cK{{\mathcal{K}}}
\def\S{{\mathcal{S}}}
\def\X{{\mathcal{X}}}
\def\cX{{\mathcal{X}}}
\def\Y{{\mathcal{Y}}}
\def\N{{\mathcal{N}}}
\def\Z{{\mathbb{Z}}}
\def\a{{ \alpha}}
\def\b{{ \beta}}
\def\epsilon{\varepsilon}
\DeclareMathOperator{\Def}{Def}
\DeclareMathOperator{\Spec}{Spec}
\begin{document}

\title{The Craighero--Gattazzo surface is simply-connected}
\author[Julie Rana]{Julie Rana}
\email{jrana@umn.edu}
\address{School of Mathematics, University of Minnesota, 127 Vincent Hall, 206 Church St. SE, Minneapolis, MN 55455, USA.}
\author[Jenia Tevelev]{Jenia Tevelev}
\email{tevelev@math.umass.edu}
\address{Department of Mathematics and Statistics, Lederle Graduate Research Tower, 1623D, University of Massachusetts Amherst, 710 N. Pleasant Street, Amherst, MA 01003-9305, USA}
\author[Giancarlo Urz\'ua]{Giancarlo Urz\'ua}
\email{urzua@mat.uc.cl}
\address{Facultad de Matem\'aticas, Pontificia Universidad Cat\'olica de Chile, Campus San Joaqu\'in, Avenida Vicu\~na Mackenna 4860, Santiago, Chile.}
\subjclass[2010]{14J10,14J29 (primary), 14J25,14D06 (secondary)}
\keywords{Godeaux surfaces, fundamental group, deformation theory, moduli space}
\date{\today}

\begin{abstract}
We show that the Craighero--Gattazzo surface, the minimal resolution of an explicit complex quintic surface with four elliptic singularities, is simply-connected. This was conjectured by Dolgachev and Werner, who proved that its fundamental group has a trivial profinite completion.
The Craighero--Gattazzo surface is the only explicit example of a smooth
simply-connected complex surface of geometric genus zero with ample canonical class.
We hope that our method will find other applications: to prove a topological fact about a complex surface we use an algebraic reduction mod $p$ technique and deformation theory.
\end{abstract}

\maketitle

\section{Introduction}

Simply-connected minimal complex surfaces of general type of geometric genus zero, i.e.~
without global holomorphic $2$-forms,
occupy a special place in the geography of surfaces; see the excellent survey \cite{BCP}.
These surfaces are  homeomorphic (but not diffeomorphic)
to del Pezzo surfaces, i.e.~blow-ups of $\P^2$ in $9-K^2$ points where $1 \leq K^2 \leq 8$. Describing their
Gieseker moduli space of canonically polarized surfaces,
or even finding explicit examples, is difficult.
The first example was found by Barlow \cite{Ba}. Her surface has $K^2=1$ and contains four $(-2)$-curves.
Contracting them gives a canonically polarized surface with four $A_1$ singularities.
One can show by deformation theory that the local Gieseker moduli space of the Barlow surface is smooth and $8$-dimensional, and
there exist nearby surfaces which are smooth \cite[Th.~7]{CL}
and \cite{L}.

More examples, including examples for every $1\le K^2 \le 4$, were found using $\Q$-Gorenstein deformation theory, starting with the pioneering work of Lee and Park \cite{LP07}; see also \cite{PPSa,PPSb,SU}. From the moduli space perspective, the Gieseker moduli space of canonically polarized surfaces with ADE singularities is compactified by the Koll\'ar--Shepherd-Barron--Alexeev (KSBA) moduli space of canonically polarized surfaces with semi log canonical singularities \cite{KSB}. We call the complement of the Gieseker space the {\em KSBA boundary}.
Lee, Park, and others explicitly constructed special points on the KSBA boundary, and proved (using deformation theory) that the local KSBA moduli space is smooth at these points, and that one can find nearby surfaces which are smooth. To compute the fundamental group of the smoothing, one has to look into what happens when the singularity is replaced with the Milnor fiber. In the presence of special curves on the singular surface, one can use Van Kampen's theorem to compute the fundamental
group of the smoothing; see the proof of Theorem~\ref{finalresult}.

Another remarkable surface was found by Craighero and Gattazzo \cite{CG}. Their surface $S$ is the minimal resolution of singularities of an explicit quintic surface \eqref{QUIN1} with four elliptic singularities. This surface has $K_S^2=1$. It~was proved by Dolgachev and Werner \cite{DW} that $S$ is canonically polarized and that its algebraic fundamental group (i.e.~the profinite completion of the fundamental group) is trivial. In addition, it was proved by Catanese and Pignatelli \cite[Th. 0.31]{CP} that the local moduli space of $S$ is smooth of dimension~$8$. It was originally claimed in \cite{DW} that $S$ is simply-connected, but a serious flaw was
discovered in the proof; see \cite[Erratum]{DW}.

The goal of this paper is to prove that $S$ is simply-connected using
an algebraic reduction mod $p$ technique and deformation theory.
We would like to use the Lee-Park argument involving the Milnor fiber of a $\Q$-Gorenstein deformation and Van Kampen's theorem.
In order to do that, we need a $\Q$-Gorenstein family of complex surfaces $\S \to U$ over a smooth irreducible complex curve $U$,
such that one of the fibers is the Craighero--Gattazzo surface $S$ and another fiber is a simply-connected surface with
a cyclic quotient singularity and containing a special curve configuration needed to prove simply-connectedness. However, it is not clear how to
explicitly construct a family containing the Craighero--Gattazzo as a fiber because no explicit model of the moduli space is known.

Our trick is to work out an integral model of the Craighero--Gattazzo surface over a ring of algebraic integers. One obvious model is given by the quintic equation. In an REU (research experience for undergraduates) directed by the first two authors, Charles Boyd discovered that this arithmetic threefold has a non-reduced fiber in characteristic $7$, and its local equation has a very special form.
Over the complex disc, analogous families of quintic surfaces were studied by the first author in~\cite{R}, where it was proved that the KSBA replacement acquires a ${1\over 4}(1,1)$ singularity in the special fiber. In fact, it is proved in~\cite{R} that
numerical quintic surfaces with a ${1\over 4}(1,1)$ singularity form a divisor in the KSBA moduli space (and this divisor is explicitly described). The upshot is that, to some degree, it can be hoped that this singularity appears in one-parameter families of surfaces, including families over a ring of algebraic integers.
We show that the KSBA limit of $S$
over the $7$-adic disc is a surface $S_0$ with a ${1\over 4}(1,1)$ singularity.
We use the word ``KSBA limit'' somewhat loosely here because existence of the mixed characteristic
KSBA moduli space (or even canonical KSBA integral models) is still only conjectural.

The minimal resolution of $S_0$ turns out to be a very special and beautiful Dolgachev surface, i.e.~an elliptic fibration over $\P^1$ with two multiple fibers, one of multiplicity $2$ and one of multiplicity $3$. We call it the Boyd surface.
By pure luck, it carries a special curve, which, if it were a complex surface, would have allowed us to conclude that the Craighero--Gattazzo surface $S$ is simply connected.
Of course our degeneration is over the $7$-adic unit disc, so we can not use Van Kampen's theorem directly.
Our main idea is to use deformation theory to conclude that $S$ admits an analogous (but no longer explicit) degeneration over the complex unit disc to a complex surface $D_0$ with a ${1\over 4}(1,1)$ singularity such that its minimal resolution is a complex Dolgachev surface analogous to the Boyd surface.

As an application of our construction, we show in Theorem~\ref{Lefschetz} that
there exist simply-connected Dolgachev surfaces (with multiple fibers of multiplicity~$2,3$)
which carry algebraic genus~$2$ Lefschetz fibrations, specifically
genus~$2$ fibrations without multiple components in fibers and such that the only singularities of fibers are nodes. Dolgachev and Werner showed existence of a genus~$2$ fibration on the Craighero--Gattazzo surface~\cite[Prop.3.2]{DW}. If this fibration had only nodal singular fibers, then by combining our theorem that the Craighero--Gattazzo surface is simply-connected, we would have the existence of a simply connected numerical Godeaux surface with a genus $2$ Lefschetz fibration.
By \cite{Fr}, these surfaces are homeomorphic to $\P^2$ blown-up in $9$ or $8$ points, respectively.
In the symplectic category,  Lefschetz fibrations on knot
surgered elliptic surfaces in the homotopy class of $\P^2$ blown-up at $9$ points were constructed in \cite{FS}
and in the homotopy classes of $\P^2$ blown-up at $8$ or $7$ points in \cite{BK}.

\subsection*{Acknowledgements}

We are grateful to Paul Hacking for numerous discussions about moduli of stable surfaces,
to Inanc Baykur for his suggestion to construct Lefschetz fibrations mentioned above and to Charles Boyd for writing and testing Macaulay2 scripts which were used to find the KSBA limit of the Craighero--Gattazzo surface in characteristic $7$. The first author was partially supported by the NSF grant DMS-1502154. The second author was supported by the NSF grant DMS-1303415. The third author was supported by the FONDECYT regular grant 1150068.


\section{Stable limit of the CG surface in characteristic~$7$}

\bigskip

Let $X\subset \P^3_\C$ be the quintic surface
\smallskip
$$a^{2}(x^{2} y^{3}+x^{3}t^{2}+y^{2} z^{3}+z^{2}t^{3})+
m^{2}( x^{3} z^{2}+x^{2} z^{3}+y^{3} t^{2}+y^{2} t^{3})+\qquad\qquad{}$$
$$2 a m (x yz^{3}+x y^{3} t+x^{3} z t+y z t^{3})+
14 m (x^{3} y z+y^{3} z t+x z^{3} t+x y t^{3})+\qquad\qquad{}$$
\begin{equation}\label{QUIN1}
7b (x^{2} y^{2} z+y^{2} z^{2} t+x^{2} y t^{2}+x z^{2} t^{2})+
14 a( x y^{3} z+x^{3} y t+y z^{3} t+x zt^{3})+ \ \ \ \ \ \ \ \ \ \ \ \ \ \
\end{equation}
$$c (x^{2} y z^{2}+x^{2} z^{2} t+x y^{2} t^{2}+y^{2} z t^{2})+
7e (xy^{2} z^{2}+x^{2} y^{2} t+x^{2} z t^{2}+y z^{2} t^{2})+\qquad\qquad{}$$
$$f (x^{2} y z t+x y^{2} z t+x y z^{2} t+x y z t^{2})+
49(x^{3} y^{2}+y^{3} z^{2}+z^{3} t^{2}+x^{2} t^{3})=0.\qquad\qquad{}$$
\smallskip

\begin{figure}[htbp]
\includegraphics[width=8cm]{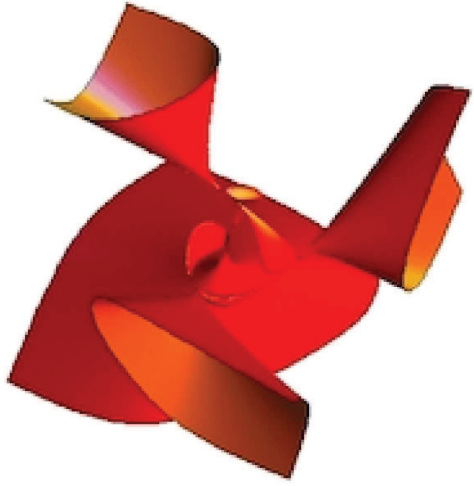}
\caption{The Craighero--Gattazzo quintic}
\end{figure}

\noindent
The coefficients are (from \cite[page 25]{CP}, multiplied by $49$)
$$a=7r^{2},\quad b=-2r^{2}+13r+18,\quad c=73r^{2}+75r+92,$$
$$e=-r^{2}+24r+9,\quad f=181r^{2}+241r+163,\quad m=3r^{2}+5r+1,$$
\smallskip
where $r$ is a complex root of the equation
\begin{equation}\label{cubeq}
r^3+r^2-1=0.
\end{equation}
The surface is invariant under the $\mu_4$ action which cyclically permutes the variables
as follows: $x\to y\to z\to t\to x$. It is singular at the points $$P_1=[1:0:0:0], \ P_2=[0:1:0:0], \ P_3=[0:0:1:0], \ P_4=[0:0:0:1].$$ Its minimal resolution is the {\em Craighero--Gattazzo surface~$S$}. Exceptional divisors over $P_1,\ldots,P_4$ are elliptic curves $\E_1,\ldots,\E_4$ such that $\E_i^2=-1$ for each~$i$.
These singularities are sometimes called {\em singularities of type $\tilde E_8$}.

The equation \eqref{QUIN1} gives an integral model of $X$ over $\Spec\Z[r]$. Since $3$ is a simple root of \eqref{cubeq} in $\Z/(7)$, by Hensel's Lemma we have a section $\Spec\Z_7\to\Spec\Z_7[r]$,
where $\Z_7$ is the ring of $7$-adic integers.
 Pulling back the integral model with respect to the base change $\Spec\Z_7\to\Spec\Z_7[r]\to\Spec\Z[r]$ gives the family
$\X$ over $\Spec\Z_7$. The corresponding root of \eqref{cubeq} modulo $7^3$ is equal to $143$ and
after some manipulations the equation of $\X$ to the order of $7^3$ takes the form
\begin{equation}\label{ranafamily}
f_1f_2^2 + 7f_2f_3 + 7^2f_5+\hbox{\rm (higher order terms)},
\end{equation}
where $f_1,f_2,f_3,f_5\in\Z/(7)[x,y,z,t]$ are the following forms (the subscript indicates the degree):
$$f_1=x+y+z+t,$$
$$f_2=x z+y t,$$
$$f_3=
2( x^{2} y
+y^{2} z
+ z^{2} t
+ x t^{2})
+x^{2} z
+x z^{2}
+y^{2} t
+y t^{2}-$$
$$3 (x y^{2}
+y z^{2}
+x^{2}t
+z t^{2}
+x y z
+x y t
+x z t
+y z t),$$
and
$$f_5=
x^{3} y^{2}
+x^{3}z^{2}
+y^{3} z^{2}
+x^{2} z^{3}
+y^{3} t^{2}
+z^{3} t^{2}
+x^{2} t^{3}
+y^{2} t^{3}
+$$
$$x^{3} y z
+y^{3} z t
+x z^{3} t
+x yt^{3}
-x y^{2} z^{2}
-x^{2} y^{2} t
-x^{2} z t^{2}
-y z^{2} t^{2}-$$
$$x^{2} y z t
-x y^{2} z t
-x y z^{2} t
-x y z t^{2}-
-3 x^{2} y^{3}
-3 y^{2}z^{3}
-3 x^{3} t^{2}
-3 z^{2} t^{3}
-$$
$$2 x^{2} y^{2} z
-2 x^{2} y z^{2}
-2 x^{2}z^{2} t
-2 y^{2} z^{2} t
-2 x^{2} y t^{2}
-2 x y^{2} t^{2}
-2 y^{2} z t^{2}
-2 x z^{2} t^{2}
-$$
$$3 x y^{3} z
-3 x^{3} y t
-3 y z^{3} t
-3 x z t^{3}.$$

This expansion  shows that the special fiber of $\X$  is
the union of the plane $L=(f_1=0)$ and
the quadric surface $Q=(f_2=0)$ with multiplicity $2$.
In~particular, it is not reduced.

Let $k$ be an algebraically closed field of characteristic $7$
and let $\cR$ be its ring of Witt vectors.
We denote the pull-back of $\X$ to $\Spec\cR$ (with respect to the canonical inclusion $\Z_7\hookrightarrow\cR$)
by the same letter~$\X$. We also pullback $L$ and $Q$ to $k$.

We would like to compute the stable limit of the generic fiber of $\X$. Over the complex disc, stable $\Q$-Gorenstein limits of families of the form \eqref{ranafamily} were computed by the first author \cite{R}, and semi-stable Gorenstein limits
of sufficiently general families
by Ashikaga and Konno~\cite{AK}. In our case the disc is $7$-adic but the computation is the same.
We now describe what the stable limit is, postponing the proof to Lemma~\ref{mainboydthing}.

Let $\Delta=L\cap Q\subset Q\simeq\P^1_k\times\P^1_k$. It is a curve in the linear system $|\O(1,1)|$.
The curve $$Q\cap (f_3^2-4f_1f_5=0)\subset\P^1_k\times\P^1_k$$
is the union of two curves in the linear system $|\O(3,3)|$:
\begin{equation}\label{b1eqqqq}
B_1=Q\cap(x y^{2}+3 x^{2} z-3 y^{2} z+3 x z^{2}-3
      x t^{2}+z t^{2}=0)
\end{equation}
and
\begin{equation}\label{b2eqqqq}
B_2=Q\cap(yz^2+3y^2t-3z^2t+3yt^2-3yx^2+tx^2=0).
\end{equation}

Figure~\ref{f1} shows how these curves intersect, where
$A_1,\ldots,A_4$ are rulings of $\P^1_k\times\P^1_k$ and $\{Q_1,Q_2\}=\Delta\cap B_1\cap B_2$.

\begin{figure}[htbp]
\includegraphics[width=10cm]{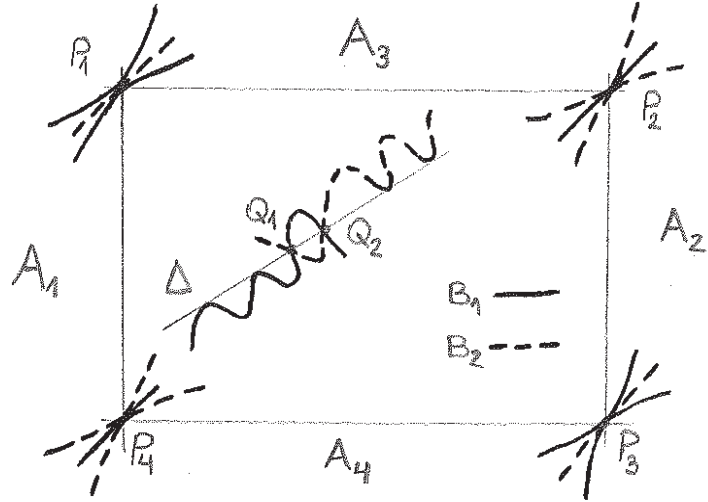}
\caption{Data in $Q\simeq\P_k^1 \times \P_k^1$}
\label{f1}
\end{figure}

\begin{lemma}
Let $$\pi:\,Z\to \P^1_k\times\P^1_k$$ be the double cover branched along $B_1\cup B_2$.
The surface $Z$ has $4$ simple elliptic singularities of type $\tilde E_8$
over $P_1,\ldots,P_4$, and two $A_1$ singularities over $Q_1$ and~$Q_2$. It is smooth elsewhere.
\end{lemma}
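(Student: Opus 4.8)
The plan is to reduce the statement to the local theory of double covers and then to an explicit local computation at the singular points of the branch curve $B=B_1\cup B_2$. Since $\operatorname{char}k=7\neq 2$, the cover $\pi\colon Z\to\P^1_k\times\P^1_k$ is the double cover determined by the line bundle $\O(3,3)$ and a section of $\O(6,6)$ vanishing on $B$ (possible because $B$, being $\{f_3^2-4f_1f_5=0\}\cap Q$, has class $(6,6)=2\cdot(3,3)$); in particular $Z$ is Gorenstein and normal, it is smooth over $(\P^1_k\times\P^1_k)\setminus\operatorname{Sing}(B)$ — over a point where $B$ is empty or smooth the double cover is smooth — and at a point over $p\in\operatorname{Sing}(B)$ the completed local ring of $Z$ is $k[[u,v,w]]/(w^2-g)$, where $g$ is a local equation of $B$ at $p$. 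So, after recording that $B$ and each $B_i$ are reduced (a direct check), the whole problem is to compute $\operatorname{Sing}(B)$ and, at each such point, the right-equivalence class of $w^2-g$.

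First I would use the $\mu_4$-symmetry: the cyclic substitution preserves $Q$ and $B$, interchanges $B_1$ and $B_2$, cyclically permutes $P_1,\dots,P_4$, and permutes $\{Q_1,Q_2\}$; hence it suffices to analyse $B$ near $P_1$ and near $Q_1$. At $P_1=[1:0:0:0]$ I would dehomogenise by $x=1$, use that $Q$ is locally the graph $z=-yt$, and substitute to obtain the local equations of $B_1$ and of $B_2$ on $Q$ in coordinates $(y,t)$. The computation — and this is the point where characteristic~$7$ does the work, for instance the quadratic part $y^2-3yt-3t^2$ of the local equation of $B_1$ at $P_1$ becomes a perfect square modulo~$7$, its discriminant being $21$ — reveals that $B_1$ has an $A_3$-singularity (a tacnode) at $P_1$ with tangent line $\ell=\{y+2t=0\}$, while $B_2$ is smooth at $P_1$ and its branch meets $\ell$ with contact order~$3$; moreover, and here characteristic~$7$ intervenes a second time, the tangent line of that branch is again $\ell$, because $-3y+t$ and $y+2t$ are proportional modulo~$7$. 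Passing to $s=y+2t$ and assigning $(s,t)$ the weights $(2,1)$, the weighted tangent cone of $B$ at $P_1$ is therefore, up to a unit, $s\,(s^2+4t^4)$: three distinct branches all tangent to $\ell$, namely $\{s=0\}$ and the two parabolas $s=\pm\alpha t^2$ with $\alpha^2=-4$.

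Giving $w$ weight~$3$, the equation $w^2=g$ of $Z$ at $P_1$ thus has weighted tangent cone $w^2+3s^3+5st^4$, whose associated binary cubic $\xi(3\xi^2+5)$ (obtained by setting $s=\xi t^2$) has three distinct roots in $k$. Performing the weighted blow-up with weights $(w,s,t)=(3,2,1)$, the higher-weight terms of $g$ restrict to $0$ on the exceptional divisor, which is therefore the curve $E=\{w^2+3s^3+5st^4=0\}\subset\P(3,2,1)$ — a smooth curve of genus~$1$ avoiding the two orbifold points of $\P(3,2,1)$; a chart computation shows the total space of the blow-up is smooth along $E$, and $E^2=-1$, so this is the minimal resolution and $Z$ has a $\tilde E_8$ simple elliptic singularity over $P_1$, hence over each of $P_1,\dots,P_4$ by symmetry. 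It remains to treat $Q_1,Q_2$ and to rule out further singular points of $B$. Each $B_i$ is reduced of arithmetic genus~$4$ and already has total $\delta$-invariant $2+2=4$ at its two tacnodes (at $P_1,P_3$ for $B_1$ and at $P_2,P_4$ for $B_2$, by symmetry), hence is smooth elsewhere; so $\operatorname{Sing}(B)=\{P_1,\dots,P_4\}\cup\bigl(B_1\cap B_2\setminus\{P_1,\dots,P_4\}\bigr)$. Since $B_1\cdot B_2=18$ and the local intersection number at each $P_i$ is $4$ (the contact-$3$ branch of one curve meets each of the two parabola-branches of the tacnode of the other with multiplicity~$2$), exactly $18-16=2$ intersection points remain; a direct check, e.g.\ by restricting to $\Delta$, shows these are two transverse points $Q_1,Q_2$. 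A node of the branch curve produces an $A_1$ on the double cover, so $Z$ has $A_1$-singularities over $Q_1$ and $Q_2$ and is smooth everywhere else.

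The main obstacle, and essentially the only step requiring genuine computation, is the local analysis at the points $P_i$: one must substitute correctly into the local equations of $B_1$ and $B_2$ in characteristic~$7$, notice that the tacnode of one component and the smooth contact-order-$3$ branch of the other share their tangent line, and then recognise that the resulting (somewhat delicate) mixed singularity of $B$ is tuned exactly so that the double cover is simple elliptic of type $\tilde E_8$ — neither a rational double point, which is what three branches not all mutually tangent would give, nor a more degenerate singularity, which is what a repeated root of the binary cubic would give — as well as check that the higher-weight correction terms may be discarded.
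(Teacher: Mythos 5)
The paper's own proof of this lemma is the single phrase ``Direct calculation,'' and your proposal carries out precisely that calculation, correctly: the quadratic part of $B_1$ at $P_1$ really is the perfect square $(y+2t)^2$ in characteristic $7$, the smooth branch of $B_2$ there really is tangent to the same line $\ell$ with contact order exactly $3$, the principal part $w^2+3s^3+5st^4$ for the weights $(3,2,1)$ does define a smooth genus-one curve in $\P(3,2,1)$ avoiding the orbifold points and having self-intersection $-6/(3\cdot 2\cdot 1)=-1$, and the count $B_1\cdot B_2=18=4\cdot 4+2$ correctly isolates the two transverse intersections at $Q_1,Q_2$. The one step I would tighten is the deduction that each $B_i$ is smooth away from its two tacnodes: the inference ``$\delta=p_a$, hence no further singularities'' is valid for irreducible curves but not for reducible connected ones (where $\delta$ can reach $p_a+r-1$), so you should either verify irreducibility of $B_i$ or, more in the spirit of a direct calculation, locate $\mathrm{Sing}(B_i)$ by the Jacobian criterion.
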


\begin{proof}
Direct calculation.
\end{proof}

We denote the ramification curves in $Z$ by $B_1$ and $B_2$, and we denote the singular points of $Z$ by the same letters as their images in $\P^1\times\P^1$.
Finally, $\pi^{-1}(\Delta)$ is the union of two smooth rational curves: $\Delta_1$ and $\Delta_2$.

Unless it causes confusion, we adopt the following convention throughout this paper: we use the same letter to denote an irreducible curve and its proper transform
after some birational transformation.

\begin{definition}
We call the minimal resolution $Y$ of $Z$ \emph{the Boyd surface}.
\end{definition}

The Boyd surface contains elliptic curves $E_1,\ldots,E_4$ of self-intersection~$-1$ (preimages of elliptic singularities of $Z$),
$(-2)$-curves $N_1$ and $N_2$ (preimages of $A_1$ singularities of $Z$), and $(-4)$-curves $\Delta_1$ and $\Delta_2$.

\begin{definition}
 Let $S_0$ be the surface obtained by contracting the $(-4)$-curve $\Delta_1$.
\end{definition}

\begin{lemma}[{c.f. \cite{R}}]\label{mainboydthing}\label{asfgasrgarg}
There exists a flat family $S\to\Spec\cR$ with special fiber~$S_0$
and generic fiber the Craighero--Gattazzo surface $S$ (after pull-back to $\C$).
Near the singular point of the special fiber, the family is formally isomorphic to
$$(xy=z^2+7)\subset{1\over2}(1,1,1)_\cR:=\Spec\cR[x,y,z]^{\mu_2},$$
where $\mu_2$ acts by $x\mapsto -x$, $y\mapsto -y$, $z\mapsto -z$.
\end{lemma}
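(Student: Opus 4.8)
The plan is to transplant the degeneration analysis of the first author \cite{R}, carried out there over a complex disc, to the mixed-characteristic base $\Spec\cR$; as the text notes, the prime $7$ is harmless, because the only primes entering the local computations are $2$ and $3$, which are units in $\cR$, and every singularity and group action that occurs is tame in characteristic $7$. I would begin with the total space $\X=V(F)\subset\P^3_\cR$, where $F=f_1f_2^2+7f_2f_3+7^2f_5+7^3(\cdots)$ as in \eqref{ranafamily}, with special fiber $L+2Q$. Since $2$ is invertible, over the locus where $f_1$ is a unit one may complete the square in $f_2$: writing $w$ for a lift of $f_2$, one gets $F=f_1\bigl(w^2+7^2\delta\bigr)$, where $w$ reduces to $f_2$ (a local equation of $Q$) and $4f_1^2\delta$ reduces to $4f_1f_5-f_3^2$ modulo $7$. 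In particular, over the open part of $Q$ disjoint from $L$ and from $B_1\cup B_2$, the threefold $\X$ étale-locally has the form $w^2=7^2(-\delta)$ with $\delta$ a unit, i.e.\ a pair of smooth branches crossing transversally along $Q$ and interchanged by the monodromy around $Q\cap(f_3^2-4f_1f_5=0)=B_1\cup B_2$. Hence normalizing $\X$ separates these branches, and resolving then replaces the double plane $2Q$ in the central fiber by the double cover $Z$ of $Q\simeq\P^1_k\times\P^1_k$ branched over $B_1\cup B_2$: this is the origin of $Z$. Along $\Delta=L\cap Q$ the picture is different; here one uses that the coefficient of $7$ in \eqref{ranafamily} is $f_2f_3$, which vanishes on $Q$, and this ``very special form'' of the family is exactly where the computation of \cite{R} pays off.

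The construction then proceeds in two stages. First, resolve $\X$ and carry out a semistable reduction: a suitable sequence of blow-ups along $Q$ (essentially, along the ideal $(f_2,7)$) and along the preimages of $\Delta$, of $B_1\cup B_2$, and of the points $P_1,\dots,P_4,Q_1,Q_2$ dictated by the local equations above --- possibly after a finite base change $\Spec\cR'\to\Spec\cR$ --- yields $\X'\to\Spec\cR'$ with $\X'$ regular and $\X'_0$ a reduced simple normal crossings divisor whose components are the strict transforms of $L$ and $Q$, a ``main'' component dominating the double cover $Z$, and exceptional surfaces over $\Delta$, over $B_1\cup B_2$, and over the $P_i$ and $Q_j$. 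Second, run the relative minimal model program over $\Spec\cR'$ --- equivalently, pass to the relative canonical model, the $\mathrm{Proj}$ of $\bigoplus_{m\ge0}g_*\O_{\X'}(mK_{\X'/\cR'})$ for the structure map $g\colon\X'\to\Spec\cR'$, which exists because the generic fiber is birational to the Craighero--Gattazzo surface, hence of general type with ample canonical class. This contracts the transient components $L'$ and $Q'$ together with the exceptional surfaces, and, by the local computation of \cite{R}, the two branches meeting along the strict transform of $\Delta$ get contracted to a single cyclic quotient singularity. Uniqueness of the relative canonical model then lets one descend the outcome to $\Spec\cR$, yielding a flat family $S\to\Spec\cR$.

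It remains to identify $S$. Away from the special fiber, $S$ agrees with the quintic family, so its generic fiber is the canonical model of a resolution of the generic quintic, that is, the Craighero--Gattazzo surface (after base change to $\C$). For the special fiber, one compares with the construction of the Boyd surface: the minimal resolution of the special fiber of $S$ is $Y$, the minimal resolution of the double cover $Z$ of $\P^1_k\times\P^1_k$ branched over $B_1\cup B_2$, and the curves $B_1,B_2$, the rulings $A_1,\dots,A_4$, the points $Q_1,Q_2$, the elliptic curves $E_i$ over $P_i$ and the $(-2)$-curves $N_i$ over $Q_j$ all match Figure~\ref{f1}, because $B_1\cup B_2$ is literally the discriminant locus $(f_3^2-4f_1f_5=0)\cap Q$ from the square completion; moreover the contracted curve is a $(-4)$-curve, hence one of $\Delta_1,\Delta_2$, and after relabeling it is $\Delta_1$. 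Thus the special fiber of $S$ is $S_0$, with its single ${1\over4}(1,1)$ point. Finally, near that point $S\to\Spec\cR$ is a $\Q$-Gorenstein smoothing of ${1\over4}(1,1)$; since such smoothings form the one-parameter family $\{xy=z^2+t\}\subset{1\over2}(1,1,1)\to\A^1_t$, the family $S$ is the pullback of this one along some $t\mapsto\phi\in7\cR$. The generic fiber of $S$ is smooth, so $\phi\neq0$, and the local computation of \cite{R} shows that $\phi$ vanishes to order exactly one, so $\phi=7u$ with $u\in\cR^\times$; rescaling $x,y,z$ by $\sqrt{u}$ (available since $u$ is a unit and $2$ is invertible) absorbs $u$ and gives the stated model $(xy=z^2+7)\subset{1\over2}(1,1,1)_\cR$.

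The main obstacle is transferring this explicit birational geometry --- the semistable reduction, the bookkeeping of all the components of the central fibers, and the relative canonical model --- from the complex disc to the mixed-characteristic base $\Spec\cR$, and verifying that the outcome is exactly $S_0$ and exactly the $\Q$-Gorenstein smoothing $(xy=z^2+7)/\mu_2$ near its singular point. This should be unproblematic in principle, since every singularity and group action that occurs is tame in characteristic $7$, but it is the step that needs care; a secondary subtlety is descending the construction from $\Spec\cR'$ to $\Spec\cR$ when a ramified base change is required for the semistable reduction, which follows from uniqueness of the relative canonical model together with Galois descent.
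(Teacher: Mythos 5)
Your overall picture of where $Z$ comes from is right: at a general point of $Q$ the equation is $uw^2+7aw+49b=0$ with $u$ a unit, completing the square gives $v^2=49c$ with $c\equiv f_3^2-4f_1f_5$, and separating the two branches produces the double cover of $Q$ branched along $B_1\cup B_2$. But the machinery you wrap around this local observation has genuine gaps. First, you invoke a semistable reduction (possibly after ramified base change) followed by the relative minimal model program / relative canonical model for a threefold over the mixed-characteristic base $\Spec\cR$. This is not available off the shelf: existence of relative canonical models in this setting is a deep and (at the time) open matter, and you cannot simply assert that $\mathrm{Proj}\bigoplus g_*\O(mK)$ behaves as over $\C$. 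Moreover, even granting it, the special fiber of a relative canonical model need not be $S_0$: the paper only establishes that $K_{S_0}$ is \emph{nef} (Lemma~\ref{nefness}), not ample, so the canonical-model formalism would a priori contract further curves; your "matching with the Boyd surface" step is asserted rather than proved. The paper sidesteps all of this with a single explicit move: the substitution $f_2=7w$ turns $\X$ into the family $(f_1w^2+f_3w+f_5+\hbox{h.o.t.}=0,\ f_2=7w)$ in $\P(1,1,1,1,2)_\cR$, whose generic fiber is unchanged and whose special fiber is computed directly to be $Z'$ (via the codimension-one isomorphism argument), with no base change, no normalization of a non-normal total space, and no MMP.

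Second, your derivation of the local model $(xy=z^2+7)$ is circular where it matters most. You correctly reduce to showing that the classifying map to the versal $\Q$-Gorenstein smoothing $\{xy=z^2+t\}$ of $\tfrac14(1,1)$ is given by some $\phi\in 7\cR$ with $v(\phi)=1$, but you then attribute $v(\phi)=1$ to "the local computation of \cite{R}" --- which is carried out over a complex disc with an abstract parameter and does not determine the $7$-adic valuation of $\phi$ for this particular arithmetic family. That valuation is exactly the content of the claim, and in the paper it falls out of the explicit chart: at $(0{:}0{:}0{:}0{:}1)$ the second equation reads $f_2=7$, so the family is literally $(xy=z^2+7)$ in $\tfrac12(1,1,1)_\cR$ after a formal change of coordinates. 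A ramified base change in your semistable reduction step would further garble this (replacing $7$ by a power of a uniformizer of $\cR'$), so the descent you defer to "uniqueness of the canonical model plus Galois descent" would also have to be tracked at the level of this local equation. In short: the idea is in the right place, but the proof needs the explicit weighted-projective model (or an equivalent direct computation) rather than the general-purpose machinery you appeal to.
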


\begin{proof}
We first produce the stable limit of the Craighero--Gattazzo quintic~$X$ in characteristic $7$.
Let $\X^0$ be the generic fiber of $\X$ given by equations \eqref{ranafamily}.
Consider the family $\hat\X\to\Spec\cR$ given by equations
$$(f_1w^2+f_3w+f_5+h.o.t.=0,\quad f_2=7w)\subset\P^4_{[x:y:z:t:w]}(1,1,1,1,2)_\cR$$
obtained by substituting $f_2$ for $7w$ in the first three terms of \eqref{ranafamily} and dividing by~$343$. Here, and throughout, ``h.o.t." refers to higher order terms with respect to the $7$-adic valuation.
The generic fiber of $\hat\X$ is clearly isomorphic to $\X^0$.

The special fiber $\hat\X _0$ is given by equations
$$(f_1w^2+f_3w+f_5=0,\quad f_2=0)\subset\P^4_{[x:y:z:t:w]}(1,1,1,1,2)_k.$$
We claim that it is isomorphic to the surface $Z'$ obtained by blowing down $4$ elliptic $(-1)$-curves on $S_0$
to $\tilde E_8$-singularities.

The point $(0:0:0:0:1)$ is an isolated singularity with equation, in a local chart,
$$(f_1+f_3+f_5=0,\quad f_2=0)\subset{1\over2}(1,1,1,1).$$
The singularity is formally isomorphic to
$$(xy=z^2)\subset{1\over2}(1,1,1)_k,$$
which has a $(-4)$-curve as the resolution graph.
Moreover, the equation of the whole family $\hat\X$ near this point is formally isomorphic to
$$(xy=z^2+7)\subset{1\over2}(1,1,1)_\cR.$$

Next we analyze $\hat\X _0$ away from $t_0=(0:0:0:0:1)$.
We use the generically $2:1$ map $\pi:\,S_0\setminus\{t_0\}\to Q$ given by $[x:y:z:t:w]\to [x:y:z:t]$.
Away from $\Delta=L\cap Q$, $\pi$~is a double cover branched along $(f_3^2-4f_1f_5=0)=B_1\cup B_2$.
Thus it can be identified with $Z'\setminus (\Delta_2\cup N_1\cup N_2)$.
Over $\Delta$, but away from $t_0$ (which includes $Q_1$ and $Q_2$) the map $\pi$ is one-to-one.
The preimages of $Q_1$ and $Q_2$ are lines (with coordinate $w$).
The preimages of the other four points where  $f_3=0$ are empty; in Figure~\ref{f1} these are the points where $B_1$ and $B_2$ are tangent to $\Delta$.
It follows that $\hat\X _0$ and $Z'$ are normal surfaces isomorphic in codimension~$1$, and therefore isomorphic.

It remains to notice that the family $\hat\X$ has $\tilde E_8$ singularities
along the sections $(1:0:0:0:0)$, $(0:1:0:0:0)$, $(0:0:1:0:0)$, and $(0:0:0:1:0)$.
Resolving them gives a family $\S\to\Spec\cR$ with special fiber $S_0$ and
generic fiber (after pulling back to $\Spec\C$) the Craighero--Gattazzo surface~$S$.
\end{proof}

\section{Study of the Boyd surface - vanishing of obstructions}\label{boydgeo1}

We have a commutative diagram,
$$\begin{CD}
W @>\tau>> Y @>>> Z\\
@V{\pi'}VV && @VV{\pi}V\\
\P &@>\sigma>> & Q=\P_k^1\times\P_k^1\\
\end{CD}$$
where the vertical maps are double covers and the horizontal maps are birational. Here $\P$ is obtained by blowing up $Q_1$ and $Q_2$
(let $\bar N_1$ and $\bar N_2$ be the exceptional divisors), blowing up $P_1,\ldots,P_4$ (let $\bar G_1,\ldots,\bar G_4$
be the exceptional divisors), and then blowing up these $4$ points again in the direction of the tangent cone to $B_1\cup B_2$
(let $\bar E_1,\ldots,\bar E_4$ be the exceptional divisors).

Since $$B_1 + B_2 + 2\bar N_1 + 2\bar N_2
\sim 6\sigma^*(\O_Q(1,1))-3 \sum_{i=1}^4 \bar G_i -6 \sum_{i=1}^4\bar E_i,$$ we have
\begin{equation}\label{sdghfghfgh}
B_1 + B_2 + 2\bar N_1 + 2\bar N_2
\sim  3 \left(\sum_{i=1}^4 A_i + \sum_{i=1}^4 \bar G_i\right)
\end{equation}
as well as
\begin{equation}
 B_1 + B_2 + \sum_{i=1}^4\bar G_i \sim 2 \Big( 3 \sigma^*(\O_Q(1,1))- \bar N_1-\bar N_2-3\sum_{i=1}^4\bar E_i - \sum_{i=1}^4\bar G_i\Big).
 \label{jhgjhgjhghjgj}\end{equation}
We define $W$ to be the double cover of $\P$ branched along the smooth curve
$$B=B_1+B_2+\bar G_1+\ldots+\bar G_4.$$
Let $N_i, E_i, G_i\subset W$ be the preimages of $\bar N_i, \bar E_i, \bar G_i$, respectively. The curves
$G_1,\ldots,G_4$ are $(-1)$-curves, and contracting them gives the Boyd surface $Y$. The curves $N_1$ and $N_2$ are $(-2)$-curves on $Y$, while
$E_1,\ldots,E_4$ are elliptic $(-1)$-curves (i.e.~elliptic curves with self-intersection $-1$).

\begin{theorem}\label{julie}
$H^2(Y, T_Y(-\log(\Delta_1+N_1)))=0$.
\end{theorem}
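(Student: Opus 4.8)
The plan is to compute $H^2(Y, T_Y(-\log(\Delta_1+N_1)))$ by pulling back to the double cover $W$ and using the eigenspace decomposition under the Galois involution. Recall $Y$ is obtained from $W$ by contracting the four $(-1)$-curves $G_1,\dots,G_4$, and $W$ is the double cover of the rational surface $\P$ (an explicit blow-up of $\P^1_k\times\P^1_k$) branched along the smooth curve $B = B_1+B_2+\bar G_1+\dots+\bar G_4$. First I would translate the log-tangent sheaf on $Y$ into data on $W$: up to the $G_i$ contractions, $T_Y(-\log(\Delta_1+N_1))$ pulls back (essentially) to $T_W(-\log(\tau^*\Delta_1 + N_1 + \sum G_i))$, and I would take care to check that the contraction does not introduce $H^2$ — pushforward along a contraction of $(-1)$-curves is exact on the relevant sheaves and $R^1$-terms vanish because the $G_i$ are rational. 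So the problem reduces to showing $H^2$ of a log-tangent sheaf on the \emph{smooth rational surface} $W$ vanishes.

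Next I would use that $\pi'\colon W\to\P$ is a $\mu_2$-cover, so $H^2(W,\mathcal F)$ for a $\mu_2$-equivariant sheaf $\mathcal F$ decomposes as the $+$ and $-$ eigenspaces, which are computed downstairs on $\P$. For the log-tangent sheaf of a double cover there is a standard exact sequence relating $T_W(-\log(\text{ramification} + \dots))$ to the pullback of $T_\P(-\log(B' + \dots))$ twisted by $\O(-\frac12 B)$ on the minus-eigenspace side; concretely I expect
\[
H^2\big(W, T_W(-\log D_W)\big) \;=\; H^2\big(\P, T_\P(-\log D_\P)\big)\ \oplus\ H^2\big(\P, T_\P(-\log D_\P)\otimes \O_\P(-\tfrac12 B)\big),
\]
for appropriate reduced divisors $D_W, D_\P$ matching up the components $\Delta_1$, $N_1$, $E_i$, $G_i$, $\bar N_i$, $\bar G_i$. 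Since $\P$ is a blow-up of $\P^1\times\P^1$, $H^2(\P, T_\P(-\log D_\P))$ vanishes by a direct argument (e.g. Serre duality against $\Omega^1_\P(\log D_\P)\otimes\omega_\P$, which has no global sections because $-K_\P$ is effective-ish / the relevant bundle has negative degree on enough curves), and for the twisted summand I would use Serre duality: $H^2(\P, T_\P(-\log D_\P)\otimes\O(-\tfrac12 B))^\vee = H^0(\P, \Omega^1_\P(\log D_\P)\otimes\omega_\P(\tfrac12 B))$, then show this $H^0$ is zero by writing $\omega_\P + \tfrac12 B$ explicitly using \eqref{sdghfghfgh}–\eqref{jhgjhgjhghjgj} and checking it pairs negatively against a moving curve, so a log-one-form section would have to vanish.

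The main obstacle will be the bookkeeping in the twisted $H^0$ vanishing on $\P$: one must get the half-canonical twist $\omega_\P(\tfrac12 B)$ exactly right (this is where \eqref{sdghfghfgh} and \eqref{jhgjhgjhghjgj} are used, expressing $\tfrac12 B$ in terms of $\sigma^*\O(1,1)$ and the exceptional classes $\bar G_i, \bar E_i, \bar N_i$), and then rule out log-differentials. I would do this by restricting to the rulings $A_i$ and to the $(-1)$/$(-2)$-configurations, showing the bundle $\Omega^1_\P(\log D_\P)\otimes\omega_\P(\tfrac12 B)$ has no sections by induction on blow-downs: pushing forward to $\P^1\times\P^1$, a section would give a log-differential with prescribed poles/zeros there, and on $\P^1\times\P^1$ an explicit degree count (using that $B_1+B_2\in|\O(3,3)|$ after accounting for the base locus at $P_i$ and $Q_j$) forces it to vanish. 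A secondary technical point is making sure the log divisor $\Delta_1+N_1$ on $Y$ — note it involves \emph{only one} of the two $\Delta$'s and \emph{only one} configuration — lifts correctly: $\Delta_1$ is a $(-4)$-curve mapping to $\Delta=L\cap Q$, so $\tau^*\Delta_1$ splits or not depending on whether $\Delta$ meets the branch locus, and I would verify from Figure~\ref{f1} that $\Delta$ meets $B$ so that $\Delta_1$ is a single smooth rational curve upstairs, keeping the log structure clean.
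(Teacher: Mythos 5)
Your overall architecture (reduce to $W$ via the contraction of the $G_i$, decompose under the covering involution, push down to the rational surface and kill sections by Serre duality) matches the paper's first steps, but there are two genuine gaps, the second of which is fatal to the plan as written.

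First, the divisor $\Delta_1+N_1$ is \emph{not} invariant under the deck transformation: $\pi^{-1}(\Delta)=\Delta_1\cup\Delta_2$ and $\mu_2$ interchanges $\Delta_1$ and $\Delta_2$ (your closing remark about whether ``$\tau^*\Delta_1$ splits'' conflates $\Delta_1$, which already lives upstairs, with the preimage of $\Delta$). Consequently $T_W(-\log(\Delta_1+N_1))$ is not a $\mu_2$-equivariant sheaf and admits no eigenspace decomposition; your displayed direct sum does not apply to it. The paper circumvents this by sandwiching: an invariant log one-form with pole along $\Delta_1$ but not $\Delta_2$ must have no pole along $\Delta_1$ at all, so the problem splits into the \emph{anti-invariant} part of $H^2(W,T_W(-\log(\Delta_1+\Delta_2+N_1)))$ and the \emph{invariant} part of $H^2(W,T_W(-\log N_1))$. (Also note that in the double-cover decomposition the two eigenspaces carry \emph{different} log divisors downstairs — the branch curve is added on the invariant side — whereas your formula keeps $D_\P$ fixed and only varies the twist.)

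Second, and more seriously, the invariant-part vanishing is not a positivity/degree-count statement. After reducing to $H^2(\P_1, T_{\P_1}(-\log(\Delta+B_1+B_2)))$, the untwisted piece $H^2(\P_1,T_{\P_1}(-\log(B_1+B_2)))$ does vanish by the kind of explicit section-counting you propose, but adding $\Delta$ to the log divisor brings in the exact sequence with $\N_{\Delta/\P_1}\cong\O_{\P^1}(-8)$, whose $H^1$ is $7$-dimensional and nonzero. One must therefore prove that the connecting map $H^1(T_{\P_1}(-\log(B_1+B_2)))\to H^1(\N_{\Delta/\P_1})$ is surjective, i.e.\ that every obstruction to deforming $\Delta$ is realized by an equisingular deformation of the pair $(\P_1,B_1\cup B_2)$ moving the points $Q_i$, $P_i$ and their tangent directions appropriately. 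This is the actual content of the theorem and it depends on the specific configuration; the paper establishes it by writing down an explicit system of $28+12$ linear equations in the deformation parameters of $B_1$ and $B_2$ and verifying its solvability in Macaulay2. No argument of the form ``the bundle pairs negatively against a moving curve'' can substitute for this step.
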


\begin{proof}
We follow \cite[4.8, 4.10]{R} closely. It suffices to show that
\begin{equation}
H^2(W, T_W(-\log(\Delta_1+N_1)))=0.
\label{asrgadfhdfhdfh}\end{equation}
Indeed, if this is the case then Serre duality implies
$$0=H^0(W, \Omega^1_W(\log(\Delta_1+N_1))(K_W))=$$
$$H^0(Y, \tau_*\left[\Omega^1_W(\log(\Delta_1+N_1))(G_1+\ldots+G_4)\right](K_Y))=$$
(by Lemma~\ref{pushforward})
$$=H^0(Y, \Omega^1_Y(\log(\Delta_1+N_1))(K_Y))=H^2(Y, T_Y(-\log(\Delta_1+N_1)))^\vee.$$

Arguing as in \cite[4.8]{R}, \eqref{asrgadfhdfhdfh} will follow if we can show that
\begin{equation}\label{1van}
H^2(W, T_W(-\log(\Delta_1+\Delta_2+N_1)))_-=0,
\end{equation}
and
\begin{equation}\label{3van}
H^2(W, T_W(-\log(N_1)))_+=0,
\end{equation}
where $+/-$ denotes the symmetric/skew-symmetric part with respect to the $\mu_2$-action on the double cover.
Explicitly, and using Serre duality multiple times, if $\alpha\in H^0(W, \Omega^1_W(\log(\Delta_1+N_1))(K))$, then since
$$\Omega^1_W(\log(\Delta_1+N_1))(K)\subset \Omega^1_W(\log(\Delta_1+\Delta_2+N_1))(K)$$
the one-form $\alpha$ must be invariant. But $\mu_2$ interchanges $\Delta_1$ and $\Delta_2$, so that $\alpha$ does not have a pole along $\Delta_1$.
Thus, $\alpha\in \Omega_W^1(\log N_1)(K)$ is an invariant one-form. Equation \eqref{3van} implies that $\alpha=0$.

\underline{\emph{Proof of \eqref{1van}.}}
At each of the points $Q_3, \ldots, Q_6$ (the remaining points of $B_i\cap \Delta$) we blow up twice to obtain a surface $\P_1$ where $\Delta$ and $B_i$ have normal crossings. Let $\bar C_i$, $\bar F_i$, $i=3,\ldots, 6$ be the exceptional divisors of these blowups, so that on $\P_1$ we have $\bar C_i^2=-2$ and $\bar F_i^2=-1$.
Let $\sigma' \colon \P_1 \to Q$ be the composition of these blowups, and let $f:\,W_1\to\P_1$ be the double cover branched over $B_1+B_2+\sum\bar G_i+\sum\bar C_i$.

The surface $W_1$ contains $(-1)$-curves $C_i$ and $(-2)$-curves $F_i$ which contract to give the surface $W$. By the $(-1)$- and $(-2)$-curve principles~\cite[Prop. 4.3, Thm. 4.4]{PSU}
(here we only need the $(-1)$-curve principle), we have
$$H^2(W_1, T_{W_1}(-\log(\Delta_1+\Delta_2+N_1)))\simeq H^2(W, T_W(-\log(\Delta_1+\Delta_2+N_1))).$$

Notice that the double cover $f$ is defined by (see \eqref{jhgjhgjhghjgj})
$$B_1+B_2+\sum\bar G_i+\sum \bar C_i \sim 2 L,$$
where
$$L\sim 3\sigma'^*(\O_Q(1,1))-\sum\bar G_i-3\sum \bar E_i-\sum \bar N_i-\sum \bar F_i.$$
Also we have
$$K_{\P_1}=-2\sigma'^*(\O_Q(1,1))+\sum \bar N_i +\sum \bar G_i +2\sum \bar E_i+\sum\bar C_i+2\sum\bar F_i,$$
and so
$$K_{\P_1}+L\sim \sigma'^*(\O_Q(1,1))-\sum\bar E_i+\sum\bar C_i+\sum\bar F_i.$$

By Lemma~\ref{double cover}, we have
$$f_*(T_{W_1}(-\log(\Delta_1+\Delta_2+N_1)))_-=T_{\P_1}(-\log(\Delta+\bar N_1))(-L).$$
By Serre duality, it suffices to prove vanishing of
$$H^0(\P_1, \Omega_{\P_1}^1(\log(\Delta+\bar N_1))(K_{\P_1}+L)),$$
or
$$H^0(\P_1, \Omega_{\P_1}^1(\log(\Delta+\bar N_1))(\sigma'^*(\O_Q(1,1))-\sum\bar E_i+\sum\bar C_i+\sum\bar F_i).$$

By Lemma~\ref{pushforward}, we have
\begin{eqnarray*}
\sigma'_*(\Omega_{\P_1}^1(\log(\Delta+ \bar N_1))(\sigma'^*(\O_Q(1,1))-\sum\bar E_i+\sum\bar C_i+\sum\bar F_i))\hspace{.5in}&\\
\subset
\sigma'_*( \Omega_{\P_1}^1(\Delta+ \bar N_1 +\sigma'^*(\O_Q(1,1))-\sum\bar E_i+\sum\bar C_i+\sum\bar F_i))&\\
=\sigma'_*( \Omega_{\P_1}^1(\sigma'^*(\Delta)+\sigma'^*(\O_Q(1,1))- \bar N_2 -\sum\bar E_i-\sum\bar F_i))&\\
\subset \Omega_Q^1 \otimes \O_Q(2,2)\otimes\mathcal{I}_{Q_2}\bigotimes_{i=1}^4 \mathcal{I}_{P_i}&\\
\end{eqnarray*}
Since
$\Omega_Q^1=\O_Q(-2, 0)\oplus \O_Q(0,-2)$, we have
$$\Omega_Q^1 \otimes \O_Q(2,2)=\O_Q(0,2)\oplus \O_Q(2,0).$$
Thus, any global section of $\Omega_Q^1 \otimes \O_Q(2,2)\otimes\mathcal{I}_{Q_2}\bigotimes_{i=1}^4 \mathcal{I}_{P_i}$ is a global section of $\O_Q(0,2)\oplus \O_Q(2,0)$ vanishing at the points $Q_2, P_1, \ldots, P_4$. Since these points are in three distinct horizontal and vertical fibers of $Q$, any such global section must be $0$. This completes the proof of Equation~\ref{1van}.


\begin{lemma}\label{double cover}
Let $Y$ be a smooth projective surface defined over an algebraically closed field of characteristic $\ne2$.
Let $f:X\rightarrow Y$ be a double cover with a
smooth branch divisor $B\subset Y$. Let $C=f^{-1}(D)$ be the preimage of a smooth curve $D$ on $Y$, and suppose that $D$ intersects $B$ transversally. Then
$$f_*(\Omega_X^1(\log C))=\Omega_Y^1(\log(D))\oplus\Omega_Y^1(\log (D+B))(-L)$$
and
$$f_*(T_X(-\log C))=T_Y(-\log(D+B))\oplus T_Y(-\log(D))(-L) $$
where $B\sim 2L$.
Moreover, these decompositions break the sheaves into their invariant and anti-invariant subspaces under the action of $\mu_2$ by deck transformations.
\end{lemma}
\begin{proof}
The surface $X$ is defined in the total space of the line bundle $L$ by the equation $z^2=x$ where $x$ is a global section of
$\mathcal{O}_{Y}(2L)$. This allows us to work \'etale-locally, using the argument of~\cite[4.6]{R}.
\end{proof}


\begin{lemma}\label{pushforward} Let $Y$ be a smooth
projective surface defined over an algebraically closed field. Let $\sigma: X\rightarrow Y$ be the blowup of $p\in Y$
with exceptional divisor~$E$. Then for every integer $m\ge 0$, we have $\sigma_*(\Omega_X^1(mE))= \Omega_Y^1$. Moreover,
$\sigma_*(\Omega_X^1(-E))=\Omega_Y^1\otimes \mathcal{I}_p$, where $\mathcal{I}_p$ is the ideal sheaf of the point $p$.
\end{lemma}

\begin{proof}
Let $\eta$ be the generic point of $Y$. The sheaves $\sigma_*(\Omega_X^1(mE))$ and $\Omega_Y^1$ are subsheaves of the
constant sheaf with stalk $\Omega_{Y,\eta}^1$ (the sheaf of rational differentials).
A local section of $\sigma_*(\Omega_X^1(mE))$ is regular outside of~$p$ and therefore regular at $p$
since $\Omega_Y^1$ is locally free. Thus we have an injective map
$i: \sigma_*(\Omega_X^1(mE))\rightarrow \Omega_Y^1$.
 It is surjective because given a local $1$-form $\alpha\in\Omega_Y^1(U)$, the $1$-form $\sigma^*(\alpha)\in\Omega_X^1(\sigma^{-1}(U))\subset\Omega_X^1(mE)(\sigma^{-1}(U))$ maps to $\alpha$.

For the second part, we have an injective map $i: \sigma_*(\Omega_X^1(-E))\rightarrow \Omega_Y^1$, as above. Moreover, any one-form $i(\alpha)$ in the image of $i$ vanishes at $p$, since $\alpha$ vanishes along $E$. Thus, the image of $i$ is the sheaf $\Omega_Y^1\otimes \mathcal{I}_p$.
\end{proof}

\underline{\emph{Proof of \eqref{3van}.}}
Note that we have the short exact sequence
$$0\rightarrow T_W(-\log(\Delta_1+\Delta_2+N_1))\rightarrow T_W(-\log N_1)\rightarrow \N_{\Delta_1/W}\oplus \N_{\Delta_2/W}\rightarrow 0.$$
Since $H^2(W, \N_{\Delta_1/W}\oplus \N_{\Delta_2/W})=0$, it suffices to prove that
$$H^2(W, T_W(-\log(\Delta_1+\Delta_2+N_1)))_+=0.$$
This part is more delicate and  the proof occupies the rest of the section.


By Lemma~\ref{double cover}, we have
$$f_*(T_{W_1}(-\log(\Delta_1+\Delta_2+N_1)))_+=T_{\P_1}(-\log(\Delta+\bar N_1+B_1+B_2+\sum \bar G_i+\sum\bar C_i)).$$
Again applying the $(-1)$ and $(-2)$-curve principles, it suffices to show that
$$H^2(\P_1, T_{\P_1}(-\log(\Delta+B_1+B_2)))=0.$$

To begin with, we claim that
\begin{equation}\label{onp1}
H^2(\P_1, T_{\P_1}(-\log(B_1+B_2)))=0.
\end{equation}
Because $B_1$ and $B_2$ have simple normal crossings after contracting the curves $\bar C_i$ and $\bar F_i$, it suffices to show that
$$H^2(\P, T_{\P}(-\log(B_1+B_2)))=0$$
or equivalently (by Serre duality)
$$ H^0(\P, \Omega_{\P}^1(\log(B_1+B_2))(-2D+\sum \bar N_i+\sum\bar G_i+2\sum\bar E_i))=0.$$

Letting $\mathcal{F}=\O_{\P}(-2D+\sum \bar N_i+\sum\bar G_i+2\sum\bar E_i)$,
we have the short exact sequence
$$0\rightarrow\Omega_{\P_1}^1\otimes\mathcal{F}\rightarrow \Omega_{\P}^1(\log(B_1+B_2))\otimes\mathcal{F}\rightarrow
(\O_{B_1}\oplus\O_{B_2})\otimes \mathcal{F}\rightarrow 0.$$
The products $B_j\cdot \mathcal{F}=-4<0$ for $j=1, 2$ and thus
$$H^0((\O_{B_1}\oplus\O_{B_2})\otimes \mathcal{F})=0.$$
The projection formula and Lemma~\ref{pushforward} give
$$H^0(\P_1,\Omega_{\P_1}^1\otimes\mathcal{F})\simeq H^0(Q, \Omega_Q^1(-2D)).$$
The sheaf $\Omega_Q^1(-2D)=\O_Q(-4,-2)\oplus\O_Q(-2,-4)$ has no global sections, completing the  proof of claim~\eqref{onp1}.

Now consider the short exact sequence
$$0\rightarrow T_{\P_1}(-\log(\Delta+B_1+B_2))\rightarrow T_{\P_1}(-\log(B_1+B_2))\rightarrow \N_{\Delta/\P_1}\rightarrow 0.$$
By claim~\eqref{onp1}, vanishing of $H^2(\P_1, T_{\P_1}(-\log(\Delta+B_1+B_2)))$ will be complete once we show that the map
\begin{equation}\label{issurjective?}
H^1(\P_1,T_{\P_1}(-\log(B_1+B_2)))\rightarrow H^1(\P_1,\N_{\Delta/\P_1})
\end{equation}
is surjective.  We identify $H^1(\P_1,T_{\P_1}(-\log(B_1+B_2)))$ with  the space of first-order infinitesimal deformations of $\P_1$
which contain an embedded first-order deformation of $B_1\cup B_2$. We identify $H^1(\P_1,\N_{\Delta/\P_1})$ with the space of obstructions to deforming $\Delta$ in $\P_1$. Thus, the map~\eqref{issurjective?} factors through the natural map
\begin{equation}\label{qqqyyyy}
H^1(\P_1,T_{\P_1})\rightarrow H^1(\P_1,\N_{\Delta/\P_1})
\end{equation}
which sends an infinitesimal first-order deformation of $\P_1$ to the obstruction to deforming $\Delta$ in this
first-order deformation of $\P_1$.
We have to show that given any such obstruction, there is a deformation of
the pair $(\P_1, B_1+B_2)$ that maps to the given obstruction.



Recall that $\P_1$ is obtained from $\P^1 \times \P^1$ by blowing up once at each of $Q_1, \ldots, Q_6; P_1,\ldots,P_4$,
and again at each of $Q_3, \ldots, Q_6$ in the direction of the proper transform of $\Delta$
and at each of $P_1, \ldots, P_4$ in the direction of tangent cone of $B_1\cup B_2$.
We denote by $\sigma_2 \colon \P_1 \to \tilde{Q}$ the ``intermediate'' blowup, i.e. the map which contracts the last eight $(-1)$-curves on $\P_1$.

We have the following exact sequence of sheaves on $\tilde{Q}$ 
$$0\rightarrow (\sigma_2)_*T_{\P_1}\rightarrow T_{\tilde{Q}}\rightarrow \bigoplus_{i=3}^6 k_{Q_i}^2 \oplus\bigoplus_{i=1}^4 k_{P_i}^2\rightarrow 0.$$
Looking at the corresponding exact sequence in cohomology, we see that every infinitesimal first-order deformation of $\P_1$ arises from either an infinitesimal first-order deformation of $\tilde{Q}$ (corresponding to an element of $H^1(\tilde{Q},T_{\tilde{Q}})$)  or from an infinitesimal first-order deformation of the points $Q_3, \ldots, Q_6, P_1,\ldots,P_4$ on $\tilde{Q}$, or both. This latter space is isomorphic to a vector space $V=(k^2)^8$. We note that $V$ has a linear subspace $V_1\simeq k^8$
corresponding to infinitesimal first-order deformations of the points $Q_3, \ldots, Q_6$, $ P_1,\ldots,P_4$ to points along the exceptional divisors of $\sigma_2$, i.e. changing the tangent direction of the infinitely-near blowup.

Similarly, because $\P^1\times \P^1$ is rigid, every first-order infinitesimal deformation of $\tilde{Q}$ arises from a first-order infinitesimal deformation of the points $Q_1, \ldots, Q_6; P_1,\ldots,P_4$ in $\P^1\times\P^1$. This latter deformation space is isomorphic to the vector space $W=(k^2)^{10}$. Thus, we have short exact sequences
$$0\to V\rightarrow H^1(\P_1, T_{\P_1})\rightarrow H^1(\tilde{Q},T_{\tilde{Q}})\rightarrow 0$$
$$0\rightarrow H^0(\P^1\times\P^1,T_{\P^1\times\P^1})\to W\rightarrow H^1(\tilde{Q},T_{\tilde{Q}})\rightarrow 0$$
signifying that every first-order infinitesimal deformation of $\P_1$, and therefore of $(\P_1, B_1\cup B_2)$, arises from
a first-order infinitesimal deformation of the points $Q_1, \ldots, Q_6$; $P_1,\ldots,P_4$ in $\P^1\times\P^1$ (i.e., an element of $W$) or a first-order deformation of $Q_3, \ldots, Q_6, P_1,\ldots,P_4$ in $\tilde{Q}$, or both.



We note that \eqref{qqqyyyy}, and even $V_1\rightarrow H^1(\P_1,\N_{\Delta/\P_1})$, is surjective, i.e.~
each obstruction in $H^1(\P_1,\N_{\Delta/\P_1})$
arises from a first-order infinitesimal deformation of
$Q_1, \ldots, Q_6$ and the tangent directions of $Q_3, \ldots, Q_6$ in $\P^1\times\P^1$ that fails to induce a first-order embedded deformation of  $\Delta$.

\begin{lemma}\label{basis}
The space $H^1(\P_1,\N_{\Delta/\P_1})$ has dimension $7$ and has
the following distinguished basis. Each basis element comes from a first order deformation of $\P_1$ which fixes $Q_1$, $Q_2$, $Q_3$
(this takes care of infinitesimal automorphisms of $\P^1\times\P^1$) and
either
\begin{itemize}
\item $I_k$ for $k=1, 2, 3$ leaves the tangent direction at $Q_3$ fixed, i.e.~parallel to $\Delta$, and moves $Q_{k+3}\in \{Q_4, Q_5, Q_6\}$ off $\Delta$ while keeping the remaining points and their tangent directions
fixed, i.e.~parallel to $\Delta$,
or
\item $I_k$ for $k=4, 5, 6, 7$ fixes $Q_{k-1}\in\{Q_3, Q_4, Q_5, Q_6\}$ and changes the tangent direction at $Q_{k-1}$, moving the remaining points of $Q_4, Q_5, Q_6$ along $\Delta$ and keeping the tangent directions at these remaining points fixed, i.e.~parallel to $\Delta$.
\end{itemize}
\end{lemma}
\begin{proof} Simple calculation.
\end{proof}

To show that the map~\eqref{issurjective?} is surjective, it suffices to show that for each deformation type listed, there exists an equisingular deformation of $B_1\cup B_2$ in $\P^1\times\P^1$ which passes through the points to which $Q_1, \ldots, Q_6$ deform and which has the desired tangent direction at each point.

To begin, let us choose bi-homogeneous coordinates $((\a: \a'),( \b: \b'))$ on $Q=\P^1\times\P^1$ so that $ \a=\frac{x}{y}=-\frac{t}{z}$ and $ \b=\frac{x}{t}=-\frac{y}{z}$.
Let  $g_1$ and $g_2$ be the equations (bihomogeneous of degree $(3,3)$) of $B_1$ and $B_2$, respectively.
Referring to \eqref{b1eqqqq} and \eqref{b2eqqqq}, we have
$$g_1=
-\a \a'^2 \b^3
+3 \a^2 \a' \b^2 \b'
-3 \b^2 \b' \a'^3
-3 \a \a'^2 \b \b'^2
+3 \a^3 \b \b'^2
+ \a^2 \a' \b'^3 $$
$$g_2=
- \b \b'^2 \a'^3
-3 \a \a'^2 \b^2 \b'
+3 \a \a'^2 \b'^3
-3 \b \b'^2 \a^2 \a'
+3 \b^3 \a^2 \a'
- \b^2 \b' \a^3.
$$

Global first order deformations $\tilde{B}_1$ and $\tilde{B}_2$ of $B_1$ and $B_2$ are given by equations $$g_1+\epsilon\bar{g_1}=g_1+\epsilon\sum_{0\le i, j\le 3}a_{ij} \a^i \a'^{3-i} \b^j \b'^{3-j}$$
 and
$$g_2+\epsilon\bar{g_2}=g_2+\epsilon\sum_{0\le i, j\le 3}b_{ij} \a^i \a'^{3-i} \b^j \b'^{3-j},$$
respectively. In order to describe equisingular first-order deformations of $B_1\cup B_2$, we move the singularities of $B_1$ and $B_2$ at $P_1, \ldots, P_4$ to the points $(\epsilon c_1, \epsilon d_1), \ldots, (\epsilon c_4, \epsilon d_4)$, given in local coordinates on $U_1, \ldots, U_4 \subset Q$ respectively, where
$$U_1=\{\a=\b=1\}, \quad U_2=\{\a'=\b=1\}, $$
$$U_3=\{\a=\b'=1\}, \quad  U_4=\{\a'=\b'=1\}.$$
To simplify calculations, we change coordinates on $U_1, \ldots, U_4$, so that the points $(\epsilon c_1, \epsilon d_1), \ldots, (\epsilon c_4, \epsilon d_4)$ are at the origin.

Letting $g_{ij}+\epsilon \bar{g}_{ij}$ be the degree $j$ part of the equation $g_i+\epsilon \bar{g}_i$ with respect to the new coordinates, we have the following conditions. These ensure that $B_1\cup B_2$ maintains the singularities, with possibly different tangent cones, at the points to which $P_1, \ldots, P_4$ deform. For simplicity we use the same notation for $P_1, \ldots, P_4$ and the points to which they deform.

\begin{enumerate}
\item $g_{10}+\epsilon\bar{g}_{10}=0$ on each $U_i$. This forces $\tilde{B}_1$ to pass through $P_1, \ldots, P_4$.
\item $g_{11}+\epsilon\bar{g}_{11}=0$ on $U_1$, $U_4$. This forces $\tilde{B}_1$ to be singular at $P_1$ and $P_4$.
\item $g_{12}+\epsilon\bar{g}_{12}=(m+m_1\epsilon)(g_{21}+\epsilon \bar{g}_{21})^2$, for some constants $m, m_1$, on $U_1$, $U_4$  (where $m, m_1$ may differ on $U_1$, $U_4$). This forces the tangent cones of $\tilde{B}_1$ at $P_1$ and $P_4$ to be the same as those of $\tilde{B}_2$ at $P_1$ and $P_4$.
\item $g_{13}+\epsilon\bar{g}_{13}=(g_{21}+\epsilon\bar{g}_{21})(h+\epsilon h_1)$, where $h$ and $h_1$ are quadratic forms. By Lemma~\ref{tacnode}, this forces $\tilde{B}_1$ to have tacnodes at the points $P_1$ and $P_4$.

\item $g_{20}+\epsilon\bar{g}_{20}=0$ on each $U_i$.  This forces and $\tilde{B}_2$ to pass through $P_1, \ldots, P_4$.
\item $g_{21}+\epsilon\bar{g}_{21}=0$ on $U_2$, $U_3$.  This forces $\tilde{B}_2$ to be singular at $P_2$ and $P_3$.
\item $g_{22}+\epsilon\bar{g}_{22}=(n+n_1\epsilon)(g_{11}+\epsilon\bar{g}_{11})^2$, for some constants $n, n_1$, on $U_2$, $U_3$ (where $n, n_1$ may differ on $U_2$, $U_3$). This forces the tangent cones of $\tilde{B}_2$ at $P_2$ and $P_3$ to be the same as those of $\tilde{B}_1$ at $P_2$ and $P_3$.
\item $g_{23}+\epsilon\bar{g}_{23}=(g_{11}+\epsilon\bar{g}_{11})(h+\epsilon h_1)$, where $h$ and $h_1$ are quadratic forms. By Lemma~\ref{tacnode}, this forces $\tilde{B}_2$ to have tacnodes at the points $P_2$ and $P_3$.
\end{enumerate}

Returning to original coordinates, and after simple algebraic manipulations,
this gives the following system of $28$ linear equations in $c_i, d_i, a_{ij}, b_{ij}$
(four blocks for four charts):

\begin{equations}\label{myequations}

$$a_{33}=0$$
$$b_{33}=d_1-3c_1$$
$$a_{32}=-3c_1-6d_1$$
$$a_{23}=2c_1-3d_1$$
$$a_{22}=a_{31}+b_{23}+3b_{32}$$
$$a_{13}=2a_{31}-2b_{32}+4b_{23}$$
$$2c_1-d_1+3a_{12}+a_{03}+2a_{21}-a_{30}=0$$ 

$$a_{30}=-c_2-3d_2$$ 
$$b_{30}=0$$
$$b_{31}=3c_2+2d_2$$
$$b_{20}=3d_2+c_2$$
$$b_{32}=2b_{10}+4a_{31}+2a_{20}$$
$$b_{21}=6b_{10}+6a_{31}+3a_{20}$$
$$5c_2+3d_2+5b_{00}+3b_{11}+6b_{22}+5b_{33}=0$$

$$a_{03}=c_3+3d_3$$
$$b_{03}=0$$
$$b_{02}=2d_3+3c_3$$
$$b_{13}=3d_3+c_3$$
$$b_{01}=5a_{13}+2b_{23}+3a_{02}$$
$$b_{12}=4a_{13}+6b_{23}+a_{02}$$
$$c_3+2d_3-3b_{11}+b_{33}+2b_{22}+b_{00}=0$$

$$a_{00}=0$$ 
$$b_{00}=d_4-3c_4$$ 
$$a_{01}=3c_4+6d_4$$ 
$$a_{10}=3d_4-2c_4$$ 
$$a_{20}=2a_{02}+2b_{01}+3b_{10}$$ 
$$a_{11}=a_{02}+4b_{01}+6b_{10}$$ 
$$4c_4+5d_4+2a_{03}+3a_{12}+a_{21}+5a_{30}=0$$
\end{equations}

Next, we determine all additional conditions on $a_{ij}$, $b_{ij}$, $c_i$, $d_i$ which ensure that $\tilde{B}_1$ and $\tilde{B}_2$ pass through the points to which $Q_1, \ldots, Q_6$ deform, with the desired multiplicities at each point. To do so, we look in the chart $U_4$. Here, the equation of $\Delta$ is
$$\a(1+\b)+\b-1=0.$$
Solving for $\a$ gives
$$\a=\frac{1-\b}{1+\b}.$$
Thus, the points at which $\Delta$ intersects $B_1$ and $B_2$ are the roots of the following polynomials:
$$(\b^2+1)(\b^2+4\b+6)^2$$
and
$$(\b^2+1)(\b^2+6\b+6)^2.$$
This gives the six points at which $B_1$ and $B_2$ intersect $\Delta$:
$$Q_1=(-i, i), \quad  Q_2=(i, -i),$$
$$Q_3=(3-5i, -2+4i),\quad Q_4=(3+5i, -2-4i),$$
$$Q_5=(-5+4i, -3+5i), \quad Q_6=(-5-4i, -3-5i),$$
where $i^2+1=0 \textrm{ mod }7$.

The intersections of $\bar{g}_1=0$ and $\bar{g}_2=0$ with $\Delta$ are given by the zeros of the following polynomials:
$$\hat{g}_1=
(1+\b)^3(a_{00}+a_{01}\b+a_{02}\b^2+a_{03}\b^3)$$
$$+(1+\b)^2(1-\b)(a_{10}+a_{11}\b+a_{12}\b^2+a_{13}\b^3)$$
$$+(1+\b)(1-\b)^2(a_{20}+a_{21}\b+a_{22}\b^2+a_{23}\b^3)$$
$$+(1-\b)^3(a_{30}+a_{31}\b+a_{32}\b^2+a_{33}\b^3)$$

$$\hat{g}_2=(1+\b)^3(b_{00}+b_{01}\b+b_{02}\b^2+b_{03}\b^3)$$
$$+(1+\b)^2(1-\b)(b_{10}+b_{11}\b+b_{12}\b^2+b_{13}\b^3)$$
$$+(1+b)(1-\b)^2(b_{20}+b_{21}\b+b_{22}\b^2+b_{23}\b^3)$$
$$+(1-\b)^3(b_{30}+b_{31}\b+b_{32}\b^2+b_{33}\b^3)$$

Using these equations, we obtain $8$ additional linear equations in $a_{ij}$, $b_{ij}$, $c_i$, $d_i$. These ensure that $\tilde{B}_1$ and $\tilde{B}_2$ pass through $Q_1, Q_2$, that $\tilde{B}_1$ passes through $Q_3, Q_4$, and that $\tilde{B}_2$ passes through $Q_5, Q_6$. Note that each restriction arises from setting $\beta$ equal to $i$, $-i$,  $-2+4i$, $-2-4i$, $-3+5i$, or $-3-5i$ in the appropriate equation.

\begin{itemize}
\item[(B1Q1)]
$$ 
(3c_1 - 3c_4 + 3d_1 + d_4 - 2a_{20} - 2a_{21} - a_{31} + 3a_{02} - a_{12} + 3a_{03} - 2b_{10} + 3b_{32} + 2b_{23})i $$
$$+ 3c_1 - 3c_4 + 3d_1 + d_4 + 2a_{20} - 2a_{21} + a_{31} - 3a_{02} - a_{12} + 3a_{03} + 2b_{10} - 3b_{32} - 2b_{23}=0$$
\item[(B1Q2)]
$$ 
(- 3c_1 + 3c_4 - 3d_1 - d_4 + 2a_{20} + 2a_{21} + a_{31} - 3a_{02} + a_{12} - 3a_{03} + 2b_{10} - 3b_{32} - 2b_{23})i$$
$$ + 3c_1 - 3c_4 + 3d_1 + d_4 + 2a_{20} - 2a_{21} + a_{31} - 3a_{02} - a_{12} + 3a_{03} + 2b_{10} - 3b_{32} - 2b_{23}=0$$
\item[(B2Q1)]
$$ 
(- 3c_2 - c_3 - 3d_3 - a_{20} - 3a_{31} -a_{02} - 2b_{11} + b_{22} + 3b_{32} + b_{23})i$$
$$ + 3c_2 + c_3 + 3d_3 - a_{20} - 3a_{31} - a_{02} + 2b_{11} - b_{22} + 3b_{32} + b_{23}=0$$
\item[(B2Q2)]
$$
(3c_2 + c_3 + 3d_3 + a_{20} + 3a_{31} + a_{02} + 2b_{11} - b_{22} - 3b_{32} - b_{23})i$$
$$ + 3c_2 + c_3 + 3d_3 - a_{20} - 3a_{31} - a_{02} + 2b_{11} - b_{22} + 3b_{32} + b_{23}=0$$
\item[(B1Q3)]
$$
(- c_4 + 2d_1 - 2d_4 + 3 a_{20} + 3 a_{21} -  a_{31} + 3 a_{12} + 2 b_{10} - 2 b_{32} - 3 b_{23})i$$
$$ + 3c_1 - c_4 - 3d_1 - 2d_4 - 2a_{20} + 3a_{21} - 3a_{31} + a_{02} + a_{12} - a_{03} - 3b_{10} - 3b_{32} + b_{23}=0$$
\item[(B1Q4)]
$$
(c_4 - 2 d_1 + 2 d_4 - 3 a_{20} - 3 a_{21} +  a_{31} - 3 a_{12} - 2 b_{10} + 2 b_{32} + 3 b_{23})i$$
$$ + 3c_1 - c_4 - 3d_1 - 2d_4 - 2a_{20} + 3a_{21} - 3a_{31} + a_{02} + a_{12} - a_{03} - 3b_{10} - 3b_{32} + b_{23}=0$$
\item[(B2Q5)]
$$
(c_1 + 3 c_2 + 2 c_3 + 2 d_1 -  d_3 +  a_{20} - 3 a_{31} + 2 a_{02} -  b_{10} + 3b_{11} -  b_{22} +  b_{23})i$$
$$ + 2c_1 - 2c_2 + 2c_3 - 3d_1 + 3d_2 - d_3 - 2a_{20} - a_{31} + 3a_{02} + b_{10} + 3b_{11} - 3b_{22} + b_{32} - 3b_{23}=0$$
\item[(B2Q6)]
$$
(-  c_1 - 3 c_2 - 2 c_3 - 2 d_1 +  d_3 -  a_{20} + 3 a_{31} - 2 a_{02} +  b_{10} - 3b_{11} +  b_{22} -  b_{23})i$$
$$ + 2c_1 - 2c_2 + 2c_3 - 3d_1 + 3d_2 - d_3 - 2a_{20} - a_{31} + 3a_{02} + b_{10} + 3b_{11} - 3b_{22} + b_{32} - 3b_{23}=0$$
\end{itemize}

Taking the derivatives of $\hat{g}_1$ and $\hat{g}_2$ with respect to $\b$ and setting $\b$ equal to $-2+4i$, $-2-4i$, $-3+5i$, or $-3-5i$ as appropriate gives the final four linear equations in $a_{ij}, b_{ij}, c_i, d_i$. These ensure that $\tilde{B}_1$ and $\tilde{B}_2$ are tangent to $\Delta$ at $Q_3, Q_4$ and $Q_5, Q_6$, respectively.

\begin{itemize}
\item[(dB1Q3)]
$$
(3c_1 - 2c_4 - 3d_1 + 3d_4 + a_{21} + 2a_{31} + 2a_{02} + a_{03} - b_{10} - 2b_{32})i$$
$$ - c_1 + c_4 - 2d_1 + 2d_4 + 2a_{20} + a_{21} + 3a_{02} + a_{03} - 3b_{10} - 2b_{32} - b_{23}=0$$
\item[(dB1Q4)]
$$
(- 3c_1 + 2c_4 + 3d_1 - 3d_4 - a_{21} - 2a_{31} - 2a_{02} - a_{03} + b_{10} + 2b_{32})i$$
$$ - c_1 + c_4 - 2d_1 + 2d_4 + 2a_{20} + a_{21} + 3a_{02} + a_{03} - 3b_{10} - 2b_{32} - b_{23}=0$$
\item[(dB2Q5)]
$$
(c_1 - 3c_2 - c_3 + 2d_1 + d_2 - 3d_3 + 3a_{20} + 2a_{31} + 3a_{02} + 2b_{10} - 2b_{11} - 3b_{22} - 2b_{23})i$$
$$ - c_2 - 3c_3 - 3d_2 - 2d_3 - 3a_{31} - 3a_{02} - b_{10} - b_{22} - 3b_{32}=0$$
\item[(dB2Q6)]
$$
(-c_1 + 3c_2 + c_3 - 2d_1 - d_2 + 3d_3 - 3a_{20} - 2a_{31} - 3a_{02} - 2b_{10} + 2b_{11} + 3b_{22} + 2b_{23})i$$
$$- c_2 - 3c_3 - 3d_2 - 2d_3 - 3a_{31} - 3a_{02} - b_{10} - b_{22} - 3b_{32}=0$$
\end{itemize}

Consider a basis element in $H^1(\P_1, \N_{\Delta/\P_1})$ corresponding via Lemma~\ref{basis} to some deformation of the points $P_1, \ldots,P_4, Q_1,\ldots, Q_6$ together with the tangent directions of $P_1, \ldots, P_4, Q_4, \ldots, Q_6$ in $\P^1\times \P^1$.  There are two cases, as in Lemma~\ref{basis}.

Consider for example the basis element $I_1$.  The existence of an equisingular deformation of $(\P_1, B_1\cup B_2)$ mapping to $I_1$ is equivalent to the existence of $a_{ij}, b_{ij}, c_i, d_i$ which satisfy Equations~\ref{myequations}, as well as B1Q1, B1Q2, B2Q1, B2Q2, B1Q3, dB1Q3, B1Q4-1, B2Q5, B2Q6. Here, we use Lemma~\ref{defs}.

Next we consider the basis element $I_4$. The existence of an equisingular deformation of $(\P_1, B_1\cup B_2)$ mapping to $I_4$ is equivalent to the existence of $a_{ij}, b_{ij}, c_i, d_i$ which satisfy Equations~\ref{myequations}, as well as B1Q1, B1Q2, B2Q1, B2Q2, B1Q3, dB1Q3, dB1Q3-1, B2Q4, B2Q5, B2Q6. Here, we use Lemma~\ref{defs}.

Thus, each of the seven basis elements corresponds to finding a nontrivial solution of a large system of linear equations. As working with such large matrices is unwieldy, we use Macaulay2 to check this (see the code included in the Appendix). In each case, we find that solutions indeed form either a 3- or 4-dimensional vector space, depending on the basis element, completing the proof.
\end{proof}

\begin{lemma}\label{tacnode}
The singularity $(h_1^2+h_1h_2+h.o.t.=0)\subset\A^2$, where $h_i$ is a form of degree $i$, is a tacnode (or a degeneration of a tacnode).
\end{lemma}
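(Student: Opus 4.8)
The plan is to identify the germ, up to formal isomorphism, with one of the $A_n$ normal forms $x^2+y^{n+1}$ — the tacnode being $n=3$ — and to show that divisibility of the degree-$3$ part of the defining equation by $h_1$ is exactly what rules out the node $A_1$ and the cusp $A_2$, leaving only the tacnode and its degenerations. Throughout we use $\operatorname{char} k\ne 2$, so that squares may be completed. If $h_1=0$ the equation has order $\ge 4$ and there is nothing to check (this case does not occur in the applications above), so assume $h_1\ne 0$; after a linear change of coordinates on $\A^2$ we take $h_1=x$, and then the defining series $f$ has quadratic part $x^2$.

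\emph{Step 1: Weierstrass normal form.} Since $f(x,0)=x^2+(\text{higher order in }x)$, Weierstrass preparation writes $f=u\cdot\bigl(x^2+a_1(y)\,x+a_0(y)\bigr)$ with $u\in k[[x,y]]^\times$, $u(0,0)=1$, and $a_0,a_1\in k[[y]]$ vanishing at $0$. Because $f$, hence the distinguished polynomial $P:=x^2+a_1x+a_0$, has multiplicity $2$ with quadratic part $x^2$, the coefficient of $y$ in $a_1$ and the coefficients of $y$ and $y^2$ in $a_0$ must all vanish; equivalently $\operatorname{ord}_y a_1\ge 2$ and $\operatorname{ord}_y a_0\ge 3$.

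\emph{Step 2: excluding the cusp.} With these order bounds the degree-$3$ part of $P$ is $\alpha\,xy^2+\beta\,y^3$, where $\alpha$ is the coefficient of $y^2$ in $a_1$ and $\beta$ the coefficient of $y^3$ in $a_0$. Multiplying by $u$ and reducing modulo the ideal $(x)$, the degree-$3$ part of $f$ becomes $\beta\,y^3$. On the other hand, by hypothesis the degree-$3$ part of $f$ is $h_1h_2=x\,h_2\in(x)$. Hence $\beta=0$, i.e.\ $\operatorname{ord}_y a_0\ge 4$.

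\emph{Step 3: conclusion.} Completing the square via $x\mapsto x-\tfrac12 a_1(y)$ turns $P$ into $x^2+b(y)$ with $b=a_0-\tfrac14 a_1^2$, and $\operatorname{ord}_y b\ge\min\{\operatorname{ord}_y a_0,\,2\operatorname{ord}_y a_1\}\ge 4$. If $b=0$ the germ is the non-reduced line $(x^2=0)$, a degeneration of the tacnode; otherwise $b$ is a unit times $y^m$ with $m\ge 4$, and $f$ is formally isomorphic to $x^2+y^m$ — an honest tacnode when $m=4$, and an $A_{m-1}$ degeneration of the tacnode when $m\ge 5$. This proves the lemma. The computations are all routine; the only places demanding care are the translation in Step 1 of ``multiplicity $2$ with quadratic part $x^2$'' into the order bounds $\operatorname{ord}_y a_1\ge 2$, $\operatorname{ord}_y a_0\ge 3$, and the bookkeeping of the degree-$3$ part of the product $uP$ modulo $(x)$ in Step 2 — neither constitutes a genuine obstacle.
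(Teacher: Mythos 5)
Your argument is correct, and it reaches the stated conclusion by a genuinely longer route than the paper's. The paper's proof is a two-line direct computation: since $\operatorname{char}\ne 2$ one completes the square on the nose, $h_1^2+h_1h_2=(h_1+\tfrac12h_2)^2-\tfrac14h_2^2$, observes that $\tfrac14h_2^2$ has degree $4$ and is absorbed into the higher-order terms, and then takes $h=h_1+\tfrac12h_2$ (a function with nonzero linear part $h_1$, hence part of a formal coordinate system) as a new coordinate, leaving $h^2+(\text{order}\ge4)$. You instead pass through Weierstrass preparation, translate the hypotheses into order bounds on the coefficients $a_0,a_1$ of the distinguished polynomial, use the divisibility of the cubic part by $h_1$ to force $\operatorname{ord}_y a_0\ge4$, and only then complete the square. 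The two proofs share the same kernel (char $\ne2$ plus the observation that the degree-$3$ part lies in the ideal $(h_1)$, so the shear kills it), but your version buys a finer output: the explicit $A_{m-1}$ normal form $x^2+y^m$, $m\ge4$, rather than just ``$3$-jet equal to $x^2$.'' One small caveat on that bonus: absorbing the unit in $b(y)=(\text{unit})\cdot y^m$ requires extracting a formal $m$-th root, which fails in characteristic $7$ when $7\mid m$; this does not affect the lemma, since the conclusion ``tacnode or a degeneration thereof'' already follows from $\operatorname{ord}_y b\ge4$, and the honest tacnode case $m=4$ is unaffected. Your explicit treatment of the degenerate cases $h_1=0$ and $b=0$, which the paper leaves implicit, is a welcome addition.
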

\begin{proof} Completing the square, the equation becomes $((h_1+\frac{1}{2}h_2)^2+h.o.t=0)\subset \A^2$. Letting $h=h_1+\frac{1}{2}h_2$, the singularity becomes
$(h^2+h.o.t.=0)\subset \A^2$. As there are no terms of degree $3$, this is a tacnode.
\end{proof}

\begin{lemma}\label{defs}
Let $B=(g=0)$ be the germ of a smooth curve in $\A^2$ which is simply tangent to the $x$-axis at the origin, and let $\tilde{B}=(g+\epsilon \bar{g}=0)$ be its first order infinitesimal embedded deformation. Then $\tilde{B}$ is tangent to the $x$-axis if and only if $\bar{g}(0,0)=0$.
\end{lemma}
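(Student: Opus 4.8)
\emph{Proof proposal.} The plan is to reduce the whole statement to a one-variable computation on the $x$-axis and a ``complete the square'' manipulation over the ring of dual numbers $A:=k[\epsilon]/(\epsilon^2)$. First I would unwind the hypotheses. That $B=(g=0)$ is smooth and \emph{simply} tangent to the $x$-axis at the origin means exactly that $g(0,0)=0$, $g_x(0,0)=0$, $g_y(0,0)\neq 0$, and that the restriction $g(x,0)\in k[[x]]$ vanishes to order exactly two at $x=0$; writing $g(x,0)=x^2u(x)$ we have $u(0)\neq 0$. I read ``$\tilde B$ is tangent to the $x$-axis'' as follows: the germ at the origin of the intersection scheme $\tilde B\cap\{y=0\}$, i.e.\ the local algebra $R:=A[[x]]\big/\big(g(x,0)+\epsilon\bar g(x,0)\big)$, is isomorphic as an $A$-algebra to $A[t]/(t^2)$ --- equivalently, $\tilde B$ meets $\{y=0\}$ with multiplicity two along some section $\Spec A\hookrightarrow\{y=0\}$ lifting the origin. (Note that over $A$ the intersection germ is always finite flat of degree $2$ by the simple-tangency hypothesis, so the content of ``tangency'' is that this length-$2$ scheme does not ``shear apart'', i.e.\ that it actually contains an honest $A$-section.)

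For the ``only if'' direction I would argue directly and cheaply. If $\tilde B$ is tangent to the $x$-axis, then in particular it passes through a section $\tilde p$ of $\{y=0\}$ lifting the origin. Any such section has the form $\tilde p=(\epsilon s,0)$ with $s\in k$, since it must reduce modulo $\epsilon$ to a point of the germ $B\cap\{y=0\}$, which is supported at the origin. Expanding modulo $\epsilon^2$ and using $g(0,0)=g_x(0,0)=0$,
$$g(\epsilon s,0)+\epsilon\,\bar g(\epsilon s,0)=\epsilon\,\bar g(0,0),$$
and vanishing of the left-hand side forces $\bar g(0,0)=0$.

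For the ``if'' direction --- where the (very modest) real content sits --- I would assume $\bar g(0,0)=0$ and set $\phi(x):=g(x,0)+\epsilon\bar g(x,0)\in A[[x]]$. Since $\bar g(0,0)=0$, the power series $\bar g(x,0)$ has no constant term, so $\phi(x)=\mu_0x^2+(\text{higher }x\text{-order})+\epsilon\big(\kappa_1 x+(\text{higher }x\text{-order})\big)$ with $\mu_0=u(0)\neq 0$. Because $\epsilon^2=0$ and $\operatorname{char}k=7\neq 2$, the leading piece satisfies the exact identity $\mu_0x^2+\epsilon\kappa_1x=\mu_0\big(x+\tfrac{\epsilon\kappa_1}{2\mu_0}\big)^2$ in $A[x]$; performing the nilpotent (hence legitimate) change of variable $x\mapsto x-\tfrac{\epsilon\kappa_1}{2\mu_0}$ one checks that every remaining term still vanishes to $x$-order $\ge 2$ at the new origin, so $\phi$ becomes a unit times $x^2$. (This is the same trick as in Lemma~\ref{tacnode}; alternatively one invokes Weierstrass preparation over the Artinian local ring $A$.) Hence $R\cong A[[x]]/(x^2)\cong A[t]/(t^2)$, so $\tilde B\cap\{y=0\}$ is a non-split double section, supported along $x=-\tfrac{\epsilon\kappa_1}{2\mu_0}$, which is a first-order deformation of the origin. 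Thus $\tilde B$ is tangent to the $x$-axis.

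I expect the only genuine obstacle to be bookkeeping of what ``tangent to the $x$-axis'' should mean for an infinitesimal deformation: one must notice that tangency cannot mean ``non-reduced intersection'' (the degree-$2$ intersection scheme over $A$ is never reduced in the naive sense), but rather that the double point persists as an $A$-section --- equivalently, in the Weierstrass normal form $\phi=(x^2+\alpha x+\beta)\cdot(\text{unit})$, that the constant term $\beta$, which is a unit multiple of $\bar g(0,0)$, vanishes. Once the correct formulation is fixed, both implications are short. One could also remark that the lemma is precisely the first-order shadow of the classical fact that a simple tangency of a smooth curve with a fixed line is unobstructed, with the tangency point free to move along the line.
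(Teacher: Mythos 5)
Your proposal is correct, and its computational core coincides with the paper's proof: completing the square over $k[\epsilon]/(\epsilon^2)$ locates the tangency point at $x_0=-\bar g_x(0,0)/g_{xx}(0,0)$, which is exactly the $x_0=-\bar g_1'(0,0)/g_1''(0,0)$ the paper exhibits. The only differences are that you make the meaning of ``tangent'' precise scheme-theoretically (the intersection with $\{y=0\}$ remains a non-split length-$2$ subscheme over the dual numbers) and you supply the ``only if'' direction, which the paper leaves implicit; both are welcome but do not change the argument.
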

\begin{proof}  Suppose $\bar{g}(0,0)=0$. We have to show that there exists $x_0$ with
$$g(\epsilon x_0, 0)+\epsilon \bar{g}(\epsilon x_0, 0)=0$$
and that
$$\frac{d}{dx}\left(g(\epsilon x_0, 0)+\epsilon \bar{g}(\epsilon x_0, 0)\right)=0.$$
Taking the Taylor expansion of these with respect to $\epsilon$, the first of these obviously holds. The second holds for
$$x_0=\frac{-\bar{g}_1'(0,0)}{g_1''(0,0)}.$$
\end{proof}

\section{The Boyd surface is a Dolgachev surface}\label{boydgeo2}

\begin{lemma}
Blowing-down $\bar N_1$ and $\bar N_2$ on $\P$ gives a Halphen surface of index~$3$ \cite[Ch.V Sect.6]{CD}
with a multiple fiber $A_1+\ldots+A_4+ \bar G_1+\ldots\bar G_4$ of type $I_8$.
\end{lemma}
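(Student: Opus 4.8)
The plan is to blow down $\bar N_1,\bar N_2$ and then compute with the canonical class, using \eqref{sdghfghfgh} and the equivalence printed just above it. Let $c\colon\P\to\P'$ be the contraction of the two disjoint $(-1)$-curves $\bar N_1,\bar N_2$ (disjoint because $Q_1,Q_2\in\Delta$ are distinct from $P_1,\dots,P_4$, so no subsequent blow-up meets them). Then $\P'$ is a smooth rational surface with $K_{\P'}^2=K_\P^2+2=0$; concretely it is $\P^1_k\times\P^1_k$ blown up at $P_1,\dots,P_4$ and then at the four infinitely-near points that give $\bar E_1,\dots,\bar E_4$. From the blow-up description, $K_\P\sim -2\sigma^*(\O_Q(1,1))+\bar N_1+\bar N_2+\sum_{i=1}^4\bar G_i+2\sum_{i=1}^4\bar E_i$, so that, using $c^*K_{\P'}=K_\P-\bar N_1-\bar N_2$,
$$c^*(-3K_{\P'})\sim 6\,\sigma^*(\O_Q(1,1))-3\sum_{i=1}^4\bar G_i-6\sum_{i=1}^4\bar E_i,$$
which is exactly the right-hand side of the equivalence displayed just before \eqref{sdghfghfgh}.

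Set $D:=A_1+\dots+A_4+\bar G_1+\dots+\bar G_4$ on $\P'$ (the $c$-image of the like-named curves). Pushing the equivalence above forward along $c$, and using that $c_*\bar N_i=0$, gives $c_*B_1+c_*B_2\sim -3K_{\P'}$; pushing \eqref{sdghfghfgh} forward gives $c_*B_1+c_*B_2\sim 3D$. Hence $3D\sim -3K_{\P'}$, and since $\Pic(\P')$ is torsion-free, $-K_{\P'}\sim D$. Next I would read off from Figure~\ref{f1} that $P_1,\dots,P_4$ are the four nodes of $A_1+\dots+A_4$ (two of the $A_j$ in each of the two rulings of $Q$, forming a quadrilateral), so that blowing them up inserts each $\bar G_i$ between two consecutive $A_j$'s, that each $\bar G_i$ and each $A_j$ is a $(-2)$-curve on $\P'$, and that the eight curves $A_1,\bar G_1,A_2,\bar G_2,A_3,\bar G_3,A_4,\bar G_4$ close up into a cycle. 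Thus $D$ is a reduced cycle of eight $(-2)$-curves, i.e.\ a Kodaira fibre of type $I_8$, with $D^2=0$ and $p_a(D)=1$.

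To finish I would apply the criterion of \cite[Ch.~V, \S6]{CD}: a rational surface $X$ with $K_X^2=0$ carrying an effective, reduced anticanonical divisor $D$ of arithmetic genus $1$ is a Halphen surface, its index $m$ being the order of $\N_{D/X}=\O_X(-K_X)|_D$ in $\Pic^0(D)$; and then $|-mK_X|$ is a base-point-free pencil with smooth genus-$1$ generic member, defining the elliptic fibration, whose unique multiple fibre is $-K_X=D$, of multiplicity $m$. So it only remains to compute this order for $\P'$. Since $D$ is an $I_8$-cycle we have $\Pic^0(D)\cong\mathbb{G}_m$, and $\N_{D/\P'}$ has degree $D\cdot A_i=D\cdot\bar G_i=0$ on every component, so it is the line bundle on $D$ obtained by gluing the trivial bundle on each of the eight components by eight units, i.e.\ an explicit element of $k^\times$; unwinding the blow-up picture makes this element explicit, and I would check — by the sort of direct computation used in the previous lemmas, now in characteristic $7$ — that it has order exactly $3$ (recall that $2\in\mathbb F_7^\times$ has order $3$). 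Pinning down the index as $3$ rather than $1$ or $2$ is, I expect, the only real obstacle; everything before it is bookkeeping with the blow-up picture and \eqref{sdghfghfgh}.
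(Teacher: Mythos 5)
Your reduction is correct and follows a genuinely different route from the paper's, but it stops short of the one step that actually carries the content of the lemma. You correctly derive $D\sim -K_{\P'}$ from \eqref{sdghfghfgh} (pushing forward and using torsion-freeness of $\Pic$), identify $D$ as a reduced anticanonical $I_8$-cycle, and correctly reduce the index to the order of $\N_{D/\P'}$ in $\Pic^0(D)\cong k^\times$. But you then write only that you ``would check'' this order is $3$. That order is exactly what distinguishes a Halphen surface of index $3$ from, say, an ordinary rational elliptic surface with no multiple fiber (order $1$), and nothing in your argument excludes order $1$; the remark that $2\in\F_7^\times$ has order $3$ is not evidence. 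Computing the gluing constant of $\N_{D/\P'}$ around an eight-component cycle from the explicit equations of $B_1,B_2$ and the rulings is a genuine computation, and it is the whole point of the lemma.

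The paper avoids this computation entirely, and that is worth internalizing: \eqref{sdghfghfgh} does not merely give $3D\sim -3K_{\P'}$, it exhibits a \emph{second explicit member} $B_1+B_2+2\bar N_1+2\bar N_2$ of $|{-}3K_{\P'}|$. That member is connected (both $B_i$ meet both $\bar N_j$) and the gcd of its multiplicities is $1$, so the genus-one pencil spanned by it and $3D$ has connected fibers after Stein factorization — it cannot be composed with $|{-}K|$ or $|{-}2K|$ — and hence $3D$ is a fiber of multiplicity exactly $3$. Equivalently, restricting $3D\sim B_1+B_2+2\bar N_1+2\bar N_2$ to $D$ shows $\N_{D/\P'}^{\otimes 3}\cong\O_D$, and connectedness of the other fiber rules out order $1$. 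If you want to complete your argument along your own lines, this is the cheapest way to pin down the order; otherwise you must actually carry out the $\Pic^0(D)$ computation you deferred.
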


\begin{proof}
By \eqref{sdghfghfgh}, $\P$ has a fibration $\P \to \P_k^1$ with connected fibers such that the general fiber is smooth of genus $1$; see \cite[Sect.7]{B01}. Moreover, the $I_8$ fiber $\sum_{i=1}^4 A_i + \sum_{i=1}^4 \bar G_i$ has multiplicity $3$. Thus this elliptic fibration is a Halphen surface of index $3$ (after one blows-down $\bar N_1$ and $\bar N_2$); see \cite[Ch.V Thm.5.6.1]{CD}.
\end{proof}

\begin{lemma}\label{DDDD7}
The Boyd surface $Y$ is a Dolgachev surface in characteristic $7$. The elliptic fibration $Y \to \P_k^1$ has four singular fibers: one $I_4$ with multiplicity $3$, one $I_4$ with multiplicity $2$, and two reduced $I_2$.
\end{lemma}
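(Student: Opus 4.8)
The plan is to start from the Halphen surface of index~$3$ produced by the previous lemma, which carries an elliptic fibration with a multiple fiber $A_1+\ldots+A_4+\bar G_1+\ldots+\bar G_4$ of type $I_8$ and multiplicity~$3$, and transport this fibration up to the double cover $W$ and then down to $Y$. First I would observe that the pencil on $\P$ (after blowing down $\bar N_1,\bar N_2$) pulls back under $\pi'$ to an elliptic (genus~$1$) pencil on $W$: indeed the branch divisor $B=B_1+B_2+\sum \bar G_i$ was chosen in the linear system \eqref{jhgjhgjhghjgj}, i.e.\ twice a half of $3\sigma^*\O_Q(1,1)-\ldots$, which is compatible with the pencil, so the ramification behavior over a general fiber is mild (the general fiber of $\P\to\P^1_k$ meets $B$ transversally in an even number of points), hence the double cover of a general fiber stays of genus~$1$. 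After contracting $G_1,\ldots,G_4$ we obtain the elliptic fibration $Y\to\P^1_k$.

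Next I would identify the singular fibers. The key point is that the $I_8$ fiber on the Halphen surface is supported on $A_1,A_2,A_3,A_4,\bar G_1,\ldots,\bar G_4$ arranged in an octagon (Figure~\ref{f1}); the involution on $W$ restricted to this fiber has quotient this octagon, and exactly the four curves $\bar G_1,\ldots,\bar G_4$ lie in the branch locus while $A_1,\ldots,A_4$ do not. So over that point of the base the fiber of $W$ is the double cover of an $I_8$ cycle branched at four of its eight components; a direct local analysis shows this is a cycle of $8$ rational curves, of which four (the $G_i$) are $(-1)$-curves contracted when passing to $Y$, leaving an $I_4$ cycle, and tracking multiplicities through the degree-$2$ map shows this $I_4$ fiber on $Y$ has multiplicity~$3$. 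The other three special fibers come from the branch curve $B_1+B_2$: the two points $Q_1,Q_2$ (the $A_1$-singularities of $Z$, blown up to $\bar N_1,\bar N_2$) each contribute a reduced $I_2$ fiber, coming from the two curves $N_1$ (resp.\ $N_2$) plus the component through it; and the locus where $\Delta_1+\Delta_2$ sits, namely $L\cap Q$, produces one $I_4$ fiber of multiplicity~$2$, the multiplicity~$2$ being exactly the ramification of $W\to\P$ along $\Delta$ (recall $\pi$ is $1{:}1$ over $\Delta$, so $\Delta$ is in the branch locus upstairs, forcing multiplicity~$2$). I would verify that the Euler-characteristic bookkeeping is consistent: $\chi_{\mathrm{top}}(Y)$ equals the sum of the Euler numbers of the singular fibers, $4+4+2+2=12$, which matches $e(Y)=12$ for a surface with $K^2=1$, $p_g=q=0$.

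Finally, to conclude that $Y$ is a Dolgachev surface in characteristic~$7$, I would check the defining properties: $Y$ is a minimal properly elliptic surface (minimality because the $(-1)$-curves $G_i$ were already contracted and the remaining special curves $N_i$, $\Delta_i$, $E_i$ all have self-intersection $\le -1$ and cannot be $(-1)$-curves in a fiber—the $E_i$ are elliptic, the $N_i$ are $(-2)$, the $\Delta_i$ are $(-4)$), with $p_g(Y)=q(Y)=0$ and with exactly two multiple fibers of coprime multiplicities $2$ and $3$. The coprimality $(2,3)$ and the count of fibers then pin it down as a Dolgachev surface; the $(2,3)$ multiplicity also recovers $K_Y^2=0$ and the canonical bundle formula. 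The main obstacle I anticipate is the precise local computation at the multiple $I_8$ fiber: one must be careful about how the $\mu_2$-cover ramifies along the chain, which of the resulting nodal curves get contracted to reach $Y$, and especially the claim that the resulting $I_4$ fiber retains multiplicity~$3$ rather than $6$ or $1$—this requires a careful analysis of the local monodromy and of the intersection of the pullback pencil with the exceptional locus, of the sort indicated but not carried out in detail in the analogous argument in \cite{R}.
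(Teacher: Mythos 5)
Your overall strategy (transport the Halphen pencil from $\P$ up to $W$ and down to $Y$) is the paper's, and your treatment of the multiplicity-$3$ fiber is essentially correct: the octagon $\sum A_i+\sum\bar G_i$ pulls back to an octagon in $W$ because the $\bar G_i$ lie in the branch divisor while each $A_i$ meets the branch divisor in exactly two points (so its preimage is an irreducible double cover), and contracting the four $(-1)$-curves $G_i$ yields an $I_4$ of multiplicity $3$. But your identification of the other three singular fibers is wrong. The branch divisor of $W\to\P$ is $B_1+B_2+\bar G_1+\cdots+\bar G_4$; the curve $\Delta$ is \emph{not} in it, and $\Delta_1,\Delta_2$ are not fiber components at all but multisections (from $K_Y\equiv\tfrac16F$ and $K_Y\cdot\Delta_1=2$ one gets $F\cdot\Delta_1=12$). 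The multiplicity-$2$ fiber is instead the pullback of the other distinguished member $B_1+B_2+2\bar N_1+2\bar N_2$ of the pencil \eqref{sdghfghfgh}: since $B_1,B_2$ lie in the branch locus, $\pi'^*(B_1+B_2+2\bar N_1+2\bar N_2)=2(B_1+B_2+N_1+N_2)$, and $B_1,N_1,B_2,N_2$ form the $4$-cycle. In particular $N_1$ and $N_2$ are components of \emph{this} fiber, so they cannot also furnish two separate reduced $I_2$ fibers as you claim; $Q_1,Q_2$ lie on $B_1\cap B_2\cap\Delta$ and everything attached to them sits in the single multiplicity-$2$ fiber.

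The two reduced $I_2$'s therefore have to be produced by a different argument, which is the real content of the paper's proof. Since $\rho(\P)=12$, every fiber of the Halphen pencil other than the two special ones is irreducible, and the Euler number leaves room for exactly two $I_1$ or one $II$; type $II$ is excluded because the double cover is \'etale over all remaining fibers (the branch divisor is entirely contained in the two special fibers) and a cuspidal cubic, being simply connected, admits no connected \'etale double cover. The connectedness used here is itself a point you skip: the preimage in $W$ of a general fiber of $\P\to\P^1$ is an unramified double cover and could a priori be disconnected, in which case $W\to\P^1$ would factor through a degree-$2$ cover of the base. The paper rules this out via Stein factorization, using that the two special fibers pull back to \emph{connected} fibers of multiplicities $2$ and $3$, which are coprime. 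Your remark that ``the double cover of a general fiber stays of genus $1$'' does not address this. Finally, a small but telling slip: $K_Y^2=0$ (not $1$), so $e(Y)=12\chi(\O_Y)-K_Y^2=12$; with $K^2=1$ your Euler-number bookkeeping would not close. You also need $p_g(Y)=q(Y)=0$, which the paper gets from the double-cover formula \eqref{pg0}.
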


\begin{proof}
We denote by $\alpha$ the composition $W \to \P \to \P_k^1$. Since this is a projective morphism, we have a Stein factorization for $\alpha$, i.e.~maps $\beta \colon W \to C$ with connected fibers and $\gamma \colon C \to \P_k^1$ a finite morphism such that $\alpha=\gamma \circ \beta$. Notice that the multiplicity of the fiber $B_1+B_2+ \bar N_1 + \bar N_2$ of $\P \to \P_k^1$ is $1$, and so $\gamma \colon C \to \P_k^1$ is a finite separable morphism. Notice also that the fibers $B_1+B_2+ \bar N_1 + \bar N_2$ and $I_8$ in $\P \to \P_k^1$ pull back to connected fibers of $\alpha$ with multiplicities $2$ and $3$ respectively. Since these multiplicities are coprime, we must have that the degree of $\gamma$ is one, and so $\gamma$ is an isomorphism. In this way $\alpha$ has connected fibers. In addition, since it has two multiple fibers, the Kodaira dimension of $Y$ is nonnegative \cite{CD}.

The double cover $W \to \P$ induces a connected \'etale cover between the non multiple fibers of $\alpha$. Notice that $\P \to \P_k^1$ can only have irreducible singular fibers apart from $B_1+B_2+ \bar N_1 + \bar N_2$ and $I_8$, because the Picard number of $\P$ is $12$. Therefore we can have either two $I_1$ or one $II$ as extra singular fibers. But a fiber of type $II$ is \'etale simply connected, and so it does not have a connected \'etale cover of degree $2$. Thus, $\P \to \P_k^1$ has precisely two extra $I_1$ singular fibers, and their pre-images under $W \to \P$ give two $I_2$ reduced fibers for $\alpha$. This elliptic fibration induces a relatively minimal elliptic fibration $Y \to \P_k^1$, after we blow-down the curves $G_1,\ldots,G_4$.

Using well-known facts on double covers, one can easily verify that $K_Y^2=0$, $\chi(\O_Y)=1$, and
\begin{equation}\label{pg0}
p_g(Y)=h^2(-L)=h^0(K_{\P}+L)=0,
\end{equation}
where $$L=3 \sigma^*(\Delta)- \sum_{i=1}^4 \bar E_i - 2 \sum_{i=1}^4 \bar G_i - \bar N_1 - \bar N_2$$ is the line bundle defining the double cover $\pi'$. Thus $q(Y)=0$.
\end{proof}

The previous Lemma shows the canonical class of $Y$ has the form
\begin{equation}\label{KBoyd}
K_Y \sim -F + \Gamma_2 + 2 \Gamma_3 \equiv 1/6 F,
\end{equation}
where $F$ is a general fiber, $\Gamma_2$ is the $I_4$ with multiplicity $2$, and $\Gamma_3$ is the $I_4$ with multiplicity $3$.

\begin{lemma}\label{nefness}
$K_{S_0}$ is nef.
\end{lemma}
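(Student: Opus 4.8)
The plan is to pull the statement back to the Boyd surface $Y$ along the contraction $\rho\colon Y\to S_0$ of the $(-4)$-curve $\Delta_1$, and to exploit the elliptic fibration structure of $Y$ established in Lemma~\ref{DDDD7} and the numerical description of $K_Y$ in \eqref{KBoyd}.

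First I would observe that $K_Y$ is nef. By \eqref{KBoyd} we have $K_Y\equiv\frac{1}{6}F$, where $F$ is a general fiber of the elliptic fibration $Y\to\P^1_k$. Since $F$ is nef (it can be moved away from any prescribed curve, and has intersection $0$ with every component of every fiber), the positive rational multiple $K_Y\equiv\frac16 F$ is nef as well.

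Next I would record the pull-back formula for the canonical class. As $S_0$ has a single cyclic quotient singularity of type $\frac14(1,1)$, whose minimal resolution is exactly the $(-4)$-curve $\Delta_1$, the class $K_{S_0}$ is $\Q$-Cartier and
$$\rho^*K_{S_0}=K_Y+\tfrac12\Delta_1,$$
the coefficient $\tfrac12$ being the unique value for which $(\rho^*K_{S_0})\cdot\Delta_1=0$; equivalently, the discrepancy of $\Delta_1$ over $S_0$ equals $-1+\frac{2}{4}=-\frac12$. Then I would check nefness of $\rho^*K_{S_0}$ curve by curve on $Y$. For $C=\Delta_1$, adjunction on the smooth surface $Y$ gives $K_Y\cdot\Delta_1=-2-\Delta_1^2=2$, hence $(K_Y+\frac12\Delta_1)\cdot\Delta_1=2+\frac12(-4)=0$. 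For any irreducible curve $C\neq\Delta_1$ one has $\Delta_1\cdot C\ge 0$, so $(K_Y+\frac12\Delta_1)\cdot C\ge K_Y\cdot C\ge 0$ by nefness of $K_Y$. Thus $\rho^*K_{S_0}$ is nef, and since every irreducible curve on $S_0$ is the image of its strict transform on $Y$, the projection formula ($K_{S_0}\cdot C'=(\rho^*K_{S_0})\cdot C$ for $C$ the strict transform of $C'$) yields that $K_{S_0}$ is nef.

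The argument is entirely formal once Lemma~\ref{DDDD7} and \eqref{KBoyd} are available; the only points requiring any care are the discrepancy computation at the $\frac14(1,1)$ point and the observation that $\Delta_1$ is the unique curve on $Y$ meeting $\Delta_1$ negatively, so it is the sole curve for which the nefness of $K_Y$ by itself does not immediately give the conclusion. I do not expect a genuine obstacle here.
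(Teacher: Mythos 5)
Your proof is correct and follows essentially the same route as the paper: the paper reduces nefness of $K_{S_0}$ to nefness of $K_Y$ by invoking that $Y$ is the minimal resolution of the log terminal surface $S_0$, and then quotes \eqref{KBoyd}. You simply make the reduction explicit by computing the discrepancy $-\tfrac12$ of $\Delta_1$ over the $\tfrac14(1,1)$ point and checking $\rho^*K_{S_0}=K_Y+\tfrac12\Delta_1$ curve by curve, which is a correct unwinding of the general fact the paper uses.
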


\begin{proof}
The Boyd surface $Y$ is the minimal resolution of the surface $S_0$, which has log terminal singularities. Therefore, it suffices to show that $K_Y$ is nef,
which follows from \eqref{KBoyd}.
\end{proof}

\section{Some mixed characteristic deformation theory}\label{DefSec}

In this section we show that the Craighero--Gattazzo surface can be degenerated to a special complex
surface with a ${1\over 4}(1,1)$ singularity. Our argument is
based on the following simple fact.

\begin{lemma}\label{asgrhsf}
 Let $\cR$ be a DVR with residue field~$k$
 and quotient field $K$. Let $\bar K$ be the algebraic closure of $K$.
 Let $T$ be a smooth $\cR$-scheme.
 Let $o\in T$ be a $k$-point.
 Let~$\sigma_1,\sigma_2:\,\Spec\cR\to T$
 be two sections passing through  $o$.
 Then there exists an irreducible smooth $\bar K$-curve $C$ and a morphism
 $C\to T_{\bar K}$ such that its image contains $\sigma_1(\eta)$ and $\sigma_2(\eta)$, where
 $\eta\in\Spec\cR$ is the generic point.
\end{lemma}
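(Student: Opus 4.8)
The plan is to reduce the statement to a standard fact about connectedness of fibers of smooth morphisms over the generic point. The two sections $\sigma_1,\sigma_2$ agree at the closed point $o$, so their images $\sigma_1(\eta),\sigma_2(\eta)$ are two $\bar K$-points of $T_{\bar K}$ that specialize to the same $k$-point of $T$. First I would choose a smooth affine $\cR$-subscheme $U\subseteq T$ containing $o$, so that $U$ is of finite type, flat, and has geometrically connected fibers of some dimension $d$ (possibly after shrinking; one can even arrange $U$ to be \'etale over affine space $\A^d_\cR$ near $o$). The point is to produce a single irreducible curve through both $\bar K$-points.

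The key step is this: consider the blowup $B=\mathrm{Bl}_o U\to U$, or more simply work directly in $U$. Since $U_k$ is a geometrically connected smooth $k$-variety of dimension $d$ passing through $o$, and $\sigma_i(o)=o$ for both $i$, I would argue that a sufficiently general complete-intersection curve cut out by hyperplane sections can be chosen to pass through $o$ and to deform, over $\cR$, to a curve through both generic points. Concretely: pick $d-1$ general elements of the maximal ideal of $o$ in $\O_U$ (lifted to $\cR$-coefficients) whose common zero locus $V\subseteq U$ is, near $o$, a smooth $\cR$-curve (an $\cR$-curve in the sense of a flat relative dimension-one $\cR$-scheme) — this uses smoothness of $U$ over $\cR$ and Bertini-type genericity. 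One still has to force $V_{\bar K}$ to contain the two specific points $\sigma_1(\eta),\sigma_2(\eta)$; for that I would instead slice by hyperplanes through the \emph{subscheme} $\sigma_1(\Spec\cR)\cup\sigma_2(\Spec\cR)$ (two sections glued at $o$, hence a connected closed subscheme of $U$ of relative dimension zero), choosing the $d-1$ sections to vanish on this subscheme. Genericity of the choice, together with the Jacobian criterion away from finitely many bad points, gives that $V$ is generically smooth of relative dimension one and contains both sections. Finally, take $C$ to be the normalization of an irreducible component of $V_{\bar K}$ that dominates both $\sigma_i(\eta)$: because the two sections meet at $o$, the fiber $V_k$ is connected near $o$, so $V$ is connected in a neighborhood of $o$; shrinking to that neighborhood and passing to the component through $o$ keeps both generic points in the closure of a single irreducible component, and normalizing yields a smooth irreducible $\bar K$-curve mapping to $T_{\bar K}$ with the required image.

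The main obstacle is the last connectedness/irreducibility bookkeeping: a priori the two points $\sigma_1(\eta),\sigma_2(\eta)$ could land on \emph{different} irreducible components of the generic fiber of the sliced family $V$, even though the special fibers meet at $o$. The way around this is to insist that the slicing hyperplanes contain the connected closed subscheme $Z=\sigma_1(\Spec\cR)\cup\sigma_2(\Spec\cR)$, so $Z\subseteq V$; then the Zariski closure of $Z_{\bar K}$ inside $V_{\bar K}$ is connected (it is the generic fiber of a connected flat $\cR$-scheme, namely $Z$ itself with its two branches glued at $o$), and a connected curve lies in a chain of irreducible components — but a general enough slice will be irreducible near $o$ by Bertini, so in fact a single component contains all of $Z_{\bar K}$. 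Everything else — existence of the affine neighborhood, the flatness and relative-dimension count, the Jacobian criterion ensuring generic smoothness, and normalizing to get a smooth model — is routine, so I would state those without detailed verification.
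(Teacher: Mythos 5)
There is a genuine gap, and it sits exactly at the mathematical heart of the lemma: you never actually prove that the generic fiber of (a connected neighborhood of $o$ in) $T$ is connected. In your first paragraph you simply \emph{assert} that after shrinking, $U$ ``has geometrically connected fibers,'' and in your final paragraph the justification you offer for the two points landing on one component --- ``the Zariski closure of $Z_{\bar K}$ \dots is connected (it is the generic fiber of a connected flat $\cR$-scheme, namely $Z$ itself)'' --- is false. Your own scheme $Z=\sigma_1(\Spec\cR)\cup\sigma_2(\Spec\cR)$ is the counterexample: it is connected (the branches meet at $o$) and flat over $\cR$, yet its generic fiber is the disconnected two-point scheme $\{\sigma_1(\eta),\sigma_2(\eta)\}$. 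The subsequent appeal to Bertini irreducibility cannot rescue this, because the needed irreducible slice through both points simply does not exist when the generic fiber is disconnected. To see that something must use smoothness (reducedness of the special fiber) in an essential way, take $T=\Spec\cR[x,y]/(x^2-\pi^2)$ with the sections $x=\pi$ and $x=-\pi$ meeting at the origin: this is flat and connected, the sections meet at a $k$-point, but the generic fiber is two disjoint lines and no irreducible curve contains both generic points. Your argument uses smoothness only for the Jacobian criterion on the slice, so it would ``prove'' the statement in this situation too.

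The paper's proof is devoted entirely to the point you skipped: restricting to a connected affine neighborhood $\Spec A$ of $o$, it shows $A[1/\pi]$ has no nontrivial idempotents by a valuation-of-idempotents argument that crucially uses reducedness of the special fiber (if $e=a/\pi^n$ with $n>0$ minimal, then $a^2=\pi^n a$ forces $a\equiv 0 \bmod \pi$, contradicting minimality); smoothness plus the $K$-point $\sigma_1(\eta)$ upgrades connectedness to geometric connectedness, hence geometric irreducibility, and then the cited result of Mumford produces an irreducible curve through any two points. Your Bertini slicing is, in effect, only a proof of that last (easy) step; the connectedness of the generic fiber must be supplied first, and your proposal does not supply it.
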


\begin{remark}
For the proof we only need $\sigma_1$ to be a section; $\sigma_2$ can be
~a section $\Spec\cR'\to T_{\cR'}$ after a finite surjective base change $\Spec\cR'\to \Spec\cR$.
\end{remark}

\begin{proof}
We can substitute $T$ with an affine connected component $\Spec A$ of~$o$.
By \cite[p.56]{Mum}, it suffices to prove that $T_K$ is geometrically connected.
Since it is smooth over $\Spec K$ and has a $K$-point $\sigma_1(\eta)$,
it suffices to prove that it is connected.
Arguing by contradiction, suppose it is disconnected. Then $H^0(T_K,\O_{T_K})$ contains
a non-trivial idempotent $e$.
Let $\pi\in\cR$ be a uniformizer. Since $T$ is flat over $\Spec\cR$, $\pi$ is not a zero-divisor in $A$, and so~$e\in A[1/\pi]$. Let $n$ be the minimal non-negative integer such that $e$ can be written as $a/\pi^n$ with $a\in A$.
Then $a^2=\pi^na$. Since $T$ is  smooth over $\Spec\cR$, its special fiber is reduced.
It follows that $n=0$ because otherwise $a^2=0\mod(\pi)$ and therefore $a=0\mod(\pi)$, which implies that $n$ is not minimal.
So $e\in A$, which contradicts connectedness of $T$.
\end{proof}

\begin{lemma}\label{afbadfbadfb}
Let $\cR$ be a complete DVR with residue field~$k$
 and quotient field~$K$.
 Let $\bar K$ be the algebraic closure of $K$.
 Let $F$ be a limit preserving contravariant functor
 from the category of $\cR$-schemes to the category of sets.

 Fix $\zeta_0\in F(\Spec k)$.
 Let $F_{\zeta_0}$ be its ``deformation functor'', i.e.~
 a functor
 from the category of pointed $\cR$-schemes $(X,x_0)$,
 where $x_0$ is a closed point with residue field $k$, to sets. Specifically, $F_{\zeta_0}(X,x_0)=\{\xi\in F(X),|\,F(i)\xi=\zeta_0\}$,
 where $i:\,\Spec k=\Spec k(x_0)\hookrightarrow X$ is the inclusion.

 Suppose the restriction of $F_{\zeta_0}$ to the category of spectra of local artinian $\cR$-algebras with residue field $k$
 is smooth and satisfies Schlessinger's conditions \cite{Sch}. Suppose also that the natural map
 \begin{equation}\label{ajhsbvakshjf}
 F_{\zeta_0}(\Spec A)\to\lim_{\longleftarrow}F_{\zeta_0}(\Spec A/\m^n)
 \end{equation}
 is bijective for every complete local Noetherian $\cR$-algebra $(A,\m)$ with residue field~$k$.

 Let $\Sigma_1,\Sigma_2\in F_{\zeta_0}(\Spec\cR)$ and let $\bar\Sigma_1,\bar\Sigma_2\in F(\Spec\bar K)$
be their pull-backs to $\Spec\bar K$. Then there exists an irreducible smooth $\bar K$-curve $C$, $\bar K$-points $y_1,y_2\in C$,
 and an element $\Sigma\in F(C)$ which restricts to $\bar\Sigma_1$ and $\bar\Sigma_2$ at $y_1$ and $y_2$, respectively.
\end{lemma}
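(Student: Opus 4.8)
The plan is to algebraize the deformation functor $F_{\zeta_0}$ and then to invoke Lemma~\ref{asgrhsf}. Concretely, I would first produce a smooth finite-type $\cR$-scheme $T$, a $k$-point $o\in T$, and an object $\Xi\in F_{\zeta_0}(T)$ which is versal at $o$; then realize $\Sigma_1$ and $\Sigma_2$ as pull-backs of $\Xi$ along two $\cR$-sections of $T\to\Spec\cR$ through $o$; and then feed $T$ together with these sections into Lemma~\ref{asgrhsf}, pulling $\Xi$ back along the resulting curve.

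For the first step: by Schlessinger's theorem the restriction of $F_{\zeta_0}$ to Artinian $\cR$-algebras with residue field $k$ has a hull $(\hat R,\hat\xi)$, and since $F_{\zeta_0}$ is smooth its hull $\hat R$ is formally smooth over $\cR$, whence $\hat R\cong\cR[[t_1,\dots,t_n]]$ with $n=\dim_k t_{F_{\zeta_0}}$. Hypothesis~\eqref{ajhsbvakshjf} applied to $A=\hat R$ lets me regard $\hat\xi$ as an honest object $\xi\in F_{\zeta_0}(\Spec\hat R)$. Because $F$ is limit preserving and $\cR$ is excellent (being a complete Noetherian local ring), Artin's algebraization theorem then yields a finite-type $\cR$-scheme $T$, a point $o\in T$ with residue field $k$, and $\Xi\in F_{\zeta_0}(T)$, versal at $o$, with $\widehat{\O_{T,o}}\cong\hat R$ compatibly with $\Xi$ and $\hat\xi$. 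Since $\widehat{\O_{T,o}}\cong\cR[[t_1,\dots,t_n]]$ is formally smooth over $\cR$, the morphism $T\to\Spec\cR$ is smooth at $o$, and after shrinking $T$ I may assume it is smooth over $\cR$ — exactly the hypothesis Lemma~\ref{asgrhsf} requires.

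To build the sections, fix $i\in\{1,2\}$ and a uniformizer $\pi\in\cR$. By hypothesis~\eqref{ajhsbvakshjf} applied to $A=\cR$, the element $\Sigma_i$ is a compatible system $(\Sigma_{i,n})$ with $\Sigma_{i,n}\in F_{\zeta_0}(\Spec\cR/\pi^{n+1})$ and $\Sigma_{i,0}=\zeta_0$. The residue map $\widehat{\O_{T,o}}\to k$ sends $\Xi$ to $\zeta_0=\Sigma_{i,0}$, and since each $\cR/\pi^{n+1}\to\cR/\pi^n$ is a small extension of Artinian $\cR$-algebras with residue field $k$, versality of $\Xi$ at $o$ lets me lift step by step to local $\cR$-algebra homomorphisms $\widehat{\O_{T,o}}\to\cR/\pi^{n+1}$ carrying $\Xi$ to $\Sigma_{i,n}$. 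In the limit this is a continuous local $\cR$-algebra homomorphism $\phi_i\colon\widehat{\O_{T,o}}\to\cR$, and the composite $\sigma_i\colon\Spec\cR\xrightarrow{\phi_i}\Spec\widehat{\O_{T,o}}\to T$ is an $\cR$-section of $T\to\Spec\cR$ through $o$; a final use of hypothesis~\eqref{ajhsbvakshjf} (for $A=\cR$, comparing the two objects level by level) gives $\sigma_i^*\Xi=\Sigma_i$. Now Lemma~\ref{asgrhsf} provides an irreducible smooth $\bar K$-curve $C$, a morphism $g\colon C\to T_{\bar K}$, and $\bar K$-points $y_1,y_2\in C$ with $g(y_i)$ the base change to $\bar K$ of the $K$-point $\sigma_i(\eta)\in T_K$. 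Taking $\Sigma=g^{*}\Xi\in F(C)$ and factoring $\Spec\bar K\to C\xrightarrow{g}T$ through $\Spec\bar K\to\Spec K\xrightarrow{\sigma_i|_\eta}T$ gives $\Sigma|_{y_i}=\Sigma_i|_{\Spec K}\otimes_K\bar K=\bar\Sigma_i$, as wanted.

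I expect the main obstacle to be the passage from the formal hull of $F_{\zeta_0}$ to an honest algebraic family over a scheme that is \emph{smooth over} $\cR$, since this is where all the hypotheses enter: limit preservation and excellence of $\cR$ to algebraize, smoothness of $F_{\zeta_0}$ to make $T\to\Spec\cR$ smooth, and hypothesis~\eqref{ajhsbvakshjf} both to turn the versal formal object into an honest one and to glue the $\pi$-adic approximations into genuine $\cR$-sections. One can avoid effective algebraization by using Artin approximation instead: since $\cR[[t_1,\dots,t_n]]$ is the completion of $\A^n_\cR$ at the origin, approximating $\xi$ to first order over an étale neighborhood of the origin already yields a family versal at $o$, because versality of a formal object over a power series ring depends only on its first-order part and amounts to surjectivity on tangent spaces.
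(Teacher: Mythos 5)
Your proposal is correct and follows essentially the same route as the paper: Schlessinger hull, made effective via hypothesis~\eqref{ajhsbvakshjf}, algebraized by Artin's theorem to a finite-type $\cR$-scheme $T$ that is smooth over $\cR$ near $o$ by smoothness of $F_{\zeta_0}$, sections through $o$ obtained from versality and completeness of $\cR$ and identified with $\Sigma_1,\Sigma_2$ by another application of~\eqref{ajhsbvakshjf}, and finally Lemma~\ref{asgrhsf}. The only difference is cosmetic: you spell out the step-by-step lifting that produces the sections and note the Artin-approximation alternative, while the paper leaves these to the reader.
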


\begin{proof}
By \cite{Sch}, $F_{\zeta_0}$ admits a hull, and by \eqref{ajhsbvakshjf} we can assume that
the hull is induced by an element $\bar\zeta\in F_{\zeta_0}(\Spec \cH)$, where
$(\cH,\m)$  is a complete local Noetherian $\cR$-algebra with residue field $k$.
By Artin's algebraization theorem \cite[Th.~1.6]{Ar}, there exists an $\cR$-scheme of finite type $T$,
a closed $k$-point $o\in T$, an element $\zeta\in F_{\zeta_0}(T,o)$,
and an isomorphism $\sigma:\,\hat\O_{T,o}\to\cH$ such that $F(\sigma)\zeta$ and $\bar\zeta$ agree on $\cH/\m^n$ for all $n\ge1$.
By \eqref{ajhsbvakshjf}, in fact $F(\sigma)\zeta=\bar\zeta$.

Since $F_{\zeta_0}$ is smooth,
$T\to\Spec\cR$ is formally smooth at $o$, and therefore we can assume that $T$ is a smooth $\cR$-scheme after shrinking it if necessary.

Since $\cR$ is complete, we can find sections $\sigma_1,\sigma_2:\,\Spec\cR\to T$
such that $F(\sigma_i)(\zeta)$ and $\Sigma_i$ agree on $\cR/\n^n$ for  any~$n\ge1$, where $\n\subset\cR$
is the maximal ideal. By \eqref{ajhsbvakshjf}, $F(\sigma_i)(\zeta)=\Sigma_i$.
It~remains to apply Lemma~\ref{asgrhsf}.
\end{proof}

In our application $F$ will be a functor of  $\Q$-Gorenstein deformations,
as worked out in \cite{Ha} in characteristic zero and
\cite{AH} in general.
For simplicity, we allow only Cohen--Macaulay surfaces.
Following \cite{AH}, let $\cK^\omega$ be the
category  of Koll\'ar families fibered in groupoids over the category of schemes.
An object of  $\cK^\omega$ over a scheme $B$ is a triple $(f:\,X\to B, F, \phi)$,
where $f$ is a proper flat family of connected reduced Cohen--Macaulay surfaces, $F$~is a coherent sheaf,
and $\phi:\,F\to\omega_{X/B}$ is
an isomorphism. Moreover, we assume that the formation of every reflexive power $F^{[n]}$
commutes with arbitrary base change (we call this the Koll\'ar condition) and that for every geometric point $s$ of $B$
there exists a positive integer $N_s$ such that $F^{[N_s]}|_{X_s}$ is invertible and ample.
See \cite{AH} for the description of morphisms in $\cK^\omega$ and for the proof that it  is an algebraic stack.
The functor $\Def^{\Q G}$ of $\Q$-Gorenstein deformations is the associated set-valued functor of
isomorphism classes of Koll\'ar families.

\begin{theorem}\label{abstract}
Let $\cR$ be a complete DVR with algebraically closed  residue field~$k$ and quotient field $K$.
Let $\bar K$ be the algebraic closure of $K$.
Let $\cX_1$ and $\cX_2$ be two $\Q$-Gorenstein families over $\Spec\cR$.
Suppose their special fibers are both isomorphic to a $k$-surface $X$.
Let $\cK^\omega_\cR$ be the restriction of $\cK^\omega$ to the category of $\cR$-schemes.
Suppose it is $\cR$-smooth at $X\to\Spec k$.
Then there exists an irreducible smooth $\bar K$-curve $C$, $\bar K$-points $y_1,y_2\in C$,
 and a $\Q$-Gorenstein family over $C$ with
 fibers at $y_1$ and $y_2$ isomorphic to
 $(\cX_1)_\oK$ and $(\cX_2)_\oK$, respectively.
 \end{theorem}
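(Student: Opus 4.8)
The plan is to deduce Theorem~\ref{abstract} from Lemma~\ref{afbadfbadfb} applied to the $\Q$-Gorenstein deformation functor. I would set $F=\Def^{\Q G}$, restricted to the category of $\cR$-schemes, i.e.~the set-valued functor of isomorphism classes of objects of the Koll\'ar stack $\cK^\omega_\cR$, and take $\zeta_0\in F(\Spec k)$ to be the class of $(X\to\Spec k,\ \omega_X,\ \mathrm{id})$; this is legitimate because $X$, being the special fiber of a $\Q$-Gorenstein family, is a connected reduced Cohen--Macaulay surface some reflexive power of whose dualizing sheaf is ample. Then $\cX_1$ and $\cX_2$ give elements $\Sigma_1,\Sigma_2\in F_{\zeta_0}(\Spec\cR)$, with geometric pull-backs $\bar\Sigma_i=[(\cX_i)_{\oK}]\in F(\Spec\oK)$. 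Granting the hypotheses of Lemma~\ref{afbadfbadfb}, that lemma produces an irreducible smooth $\oK$-curve $C$, points $y_1,y_2\in C$, and an element $\Sigma\in F(C)$ restricting to $\bar\Sigma_1$ and $\bar\Sigma_2$ at $y_1$ and $y_2$. Choosing a Koll\'ar family representing $\Sigma$ and forgetting the sheaf datum yields a $\Q$-Gorenstein family over $C$ whose fiber at $y_i$ is isomorphic to $(\cX_i)_{\oK}$, which is exactly the assertion of the theorem.

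What then remains is to verify the four hypotheses of Lemma~\ref{afbadfbadfb} for $F=\Def^{\Q G}$ over $\cR$-schemes. That $F$ is limit preserving follows because $\cK^\omega$ is an algebraic stack locally of finite presentation \cite{AH}, hence so is $\cK^\omega_\cR$, and hence so is its associated set-valued functor. Schlessinger's conditions for $F_{\zeta_0}$ on spectra of local Artinian $\cR$-algebras with residue field $k$ are the standard consequence of $\cK^\omega$ being an algebraic stack. Smoothness of $F_{\zeta_0}$ on such rings is precisely the hypothesis that $\cK^\omega_\cR$ is $\cR$-smooth at $X\to\Spec k$. Finally, bijectivity of \eqref{ajhsbvakshjf} for a complete local Noetherian $\cR$-algebra $(A,\m)$ is the \emph{effectivity} of formal $\Q$-Gorenstein deformations: a compatible system of Koll\'ar families over the thickenings $\Spec A/\m^n$ algebraizes---the underlying proper flat family and the reflexive sheaf $F$ by Grothendieck's existence theorem, after which one checks that the Koll\'ar base-change condition and the existence of an ample reflexive power persist, the latter because $\Spec A$ is local so ampleness need only be checked on the closed fiber---and this construction is inverse to restriction on isomorphism classes; this is worked out in \cite{AH}.

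I expect the main obstacle to be precisely this last effectivity statement, together with the passage from the groupoid $\cK^\omega$ to the set-valued functor $\Def^{\Q G}$. Schlessinger's machinery, the notion of hull, and Artin algebraization as invoked in Lemma~\ref{afbadfbadfb} are all phrased for set-valued functors, so one must work with isomorphism classes throughout; in particular one must know that a compatible system of \emph{isomorphism classes} of Koll\'ar families over $\{\Spec A/\m^n\}$ lifts to a genuine Koll\'ar family over $\Spec A$, which is exactly where Grothendieck existence applied to the underlying family and its reflexive sheaf does the essential work. Once these points from \cite{AH} are in place, the rest of the argument is a routine unwinding of definitions followed by the citation of Lemma~\ref{afbadfbadfb}.
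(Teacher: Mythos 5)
Your proposal is correct and takes essentially the same route as the paper: the paper also deduces the theorem directly from Lemma~\ref{afbadfbadfb}, asserting that the hypotheses hold for the set-valued functor of the algebraic stack $\cK^\omega_\cR$ via Artin's criterion. Your more detailed verification (limit preservation, Schlessinger's conditions, effectivity of formal deformations via Grothendieck existence, and the passage from groupoid to isomorphism classes) simply unpacks what the paper cites wholesale.
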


\begin{proof}
Since $\cK^\omega_{\cR}$ is an algebraic $\cR$-stack, its associated set-valued functor $\Def^{\Q G}_{\cR}$
satisfies the conditions of
Lemma~\ref{afbadfbadfb} by Artin's criterion \cite{Ar1}.
\end{proof}

In our situation, $\cX_1$ will be a degeneration of the Craighero--Gattazzo surface
to the contraction $S_0$ of the Boyd surface $Y$. To construct the second family, we will need the following well-known fact.

\begin{lemma}\label{ajhsdbafhjsfb}
 Let $k$ be an algebraically closed field, let $\cR$ be a complete DVR with residue field $k$,
 let $Y$ be a smooth projective surface over $k$ and let $C_1,\ldots,C_r\subset Y$ be smooth curves intersecting transversally.
 Suppose
 $$H^2(Y, T_Y(-\log(C_1+\ldots +C_r)))=H^2(Y, \O_Y)=0.$$
Then there exists a smooth projective family of surfaces $\Y\to\Spec\cR$
with closed subschemes $\cC_1, \ldots, \cC_r\subset\Y$ smooth and proper over $\Spec\cR$
such that the special fiber is $(Y; C_1,\ldots,C_r)$.
\end{lemma}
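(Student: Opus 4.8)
The plan is to realize the family $\Y\to\Spec\cR$ as a formal deformation that is then algebraized, and to obtain the subschemes $\cC_i$ by deforming them along with $Y$. First I would work in the category of local artinian $\cR$-algebras with residue field $k$ and consider the deformation functor $\Def_{(Y;\,C_1+\ldots+C_r)}$ of the pair consisting of $Y$ together with the reduced divisor $C_1+\ldots+C_r$. Standard deformation theory (as in \cite{Sch}) identifies its tangent space with $H^1(Y,T_Y(-\log(C_1+\ldots+C_r)))$ and its obstruction space with $H^2(Y,T_Y(-\log(C_1+\ldots+C_r)))$. Since the latter vanishes by hypothesis, this functor is \emph{smooth}; in particular it is unobstructed, and Schlessinger's conditions hold. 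One then picks a smooth local $\cR$-algebra $(\cH,\m)$ whose induced family realizes the hull; by smoothness $\cH$ can be taken a power series ring over $\cR$, and in any case we get a compatible system of deformations $(Y_n; C_{1,n}+\ldots+C_{r,n})$ over $\cR/\n^n$, where $\n$ is the uniformizer ideal, because the obstructions to lifting from $\cR/\n^n$ to $\cR/\n^{n+1}$ lie in $H^2(Y,T_Y(-\log(\sum C_i)))=0$ and the transversality of the $C_i$ means the divisor deforms as a disjoint union of smooth curves (or a normal crossings divisor whose components stay smooth), so each $C_{i,n}$ stays smooth over $\cR/\n^n$.

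Next I would algebraize. The condition $H^2(Y,\O_Y)=0$ ensures that the formal line bundle used to polarize is algebraizable and, more to the point, lets us invoke Grothendieck's existence theorem: the compatible system $(Y_n;C_{i,n})$ together with an ample line bundle lifted from $Y$ (the obstruction to lifting it lives in $H^2(Y,\O_Y)=0$, and sections lift because $H^1$ of high powers vanishes) defines a formal scheme over $\Spf\cR$ which is the formal completion of a genuine projective scheme $\Y\to\Spec\cR$, and the closed formal subschemes $\widehat{\cC_i}$ algebraize to closed subschemes $\cC_i\subset\Y$. Flatness of $\Y$ and of each $\cC_i$ over $\Spec\cR$ follows from flatness at each finite level (Hilbert polynomials are constant), smoothness of $\Y\to\Spec\cR$ and of $\cC_i\to\Spec\cR$ follows from smoothness of the special fibers together with properness and the local criterion, and properness of $\cC_i$ over $\Spec\cR$ is automatic since $\cC_i\subset\Y$ is closed and $\Y\to\Spec\cR$ is proper. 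By construction the special fiber of $(\Y;\cC_1,\ldots,\cC_r)$ is $(Y;C_1,\ldots,C_r)$.

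The main obstacle is the algebraization step rather than the deformation-theoretic one: one must check that the formal family is \emph{effective} (comes from an actual scheme), for which the cleanest route is to produce a formal polarization and apply Grothendieck's existence theorem / Artin approximation, and this is exactly where $H^2(Y,\O_Y)=0$ is used. A secondary subtlety is making sure the $\cC_i$ stay smooth and transverse at every level; this is handled by the logarithmic tangent sheaf formalism, since a deformation classified by $H^1(Y,T_Y(-\log(\sum C_i)))$ automatically deforms the divisor \emph{with} its normal-crossings structure, so no component acquires a singularity and no two components become tangent. One could also phrase the whole argument stack-theoretically, using that the stack of such pairs is algebraic (Artin's criterion) and smooth over $\cR$ at $(Y;\sum C_i)$ by the $H^2$-vanishing, and then lift the $k$-point $(Y;\sum C_i)$ to an $\cR$-point by smoothness and completeness of $\cR$; this is parallel to the proof of Lemma~\ref{afbadfbadfb} and gives the family directly.
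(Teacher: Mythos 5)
Your proposal is correct and follows essentially the same route as the paper: an unobstructed formal deformation of the pair controlled by $T_Y(-\log(C_1+\ldots+C_r))$, followed by lifting an ample line bundle via $H^2(Y,\O_Y)=0$ and algebraizing with Grothendieck's existence theorem. The paper merely phrases the first step more explicitly, using the exact sequence $0\to T_Y(-\log(\sum C_i))\to T_Y\to\bigoplus i_{j*}N_{C_j/Y}\to 0$ to first lift $Y$ and then choose the lift so that each $C_i$ deforms along, which is the same cohomological content as your log deformation functor.
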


\begin{proof} This is well-known but we sketch a proof for completeness.
Let $\m\subset\cR$ be the maximal ideal and let $\cR_n=\cR/\m^{n+1}$ for each $n=0,1,\ldots$
We first lift $(Y; C_1,\ldots,C_r)$ to a scheme and a collection of subschemes flat over $\Spec\cR_n$ for each $n$ by induction on $n$.
So assume we already have a lift $(Y^n; C^n_1,\ldots,C^n_r)$ to $\Spec\cR_n$.
We have an exact sequence
\begin{equation}\label{kb,hjkgk,hjbvk,}
 0\to T_Y(-\log(C_1+\ldots +C_r))\to T_Y\to i_{1*}N_{C_1/Y}\oplus\ldots\oplus i_{r*}N_{C_r/Y}\to 0
\end{equation}
of sheaves on $Y$, where $i_{j}:\,C_j\to Y$ denotes the embedding for each~$j$.
Since $H^2(Y, T_Y(-\log(C_1+\ldots +C_r)))=0$, we have
$H^2(Y, T_Y)=0$ as well. Therefore we can lift $Y^n$ to a scheme $Y^{n+1}$ flat (and then automatically
smooth and proper) over $\Spec \cR_{n+1}$. Moreover, all possible lifts form an affine space
with underlying vector space $H^1(Y, T_Y)$.
Since
$$H^1(Y, T_Y)\to H^1(C_1,N_{C_1/Y})\oplus\ldots\oplus H^1(C_r, N_{C_r/Y})$$
is surjective by $H^2(Y, T_Y(-\log(C_1+\ldots +C_r)))=0$,
we can choose a lift such that the corresponding class in $H^1(C_i, N_{C_i/Y})$ vanishes for each~$i$.
This class can be interpreted as an obstruction to lifting $C^n_i\subset Y^n$ to a subscheme $C_i^{n+1}\subset Y^{n+1}$
flat over $\Spec \cR_{n+1}$.
So we can lift all $C_i$'s to subschemes $C^{n+1}_i\subset Y^{n+1}$ flat
(and automatically smooth and proper) over $\Spec \cR_{n+1}$.
The projective limit $\hat\Y=\lim\limits_\leftarrow Y^n$ is a formal scheme smooth and proper over $\Spf\cR$.
The projective limits $\hat\cC_i=\lim\limits_\leftarrow C_i^n$ for $i=1,\ldots,n$ are closed formal subschemes
smooth and proper over $\Spf\cR$.

Since $H^2(Y, \O_Y)=0$, we can lift any ample invertible sheaf on $Y$
to an (automatically ample) invertible sheaf on $\hat\Y$. By Grothendieck's existence theorem \cite[5.4.5]{EGAIII1},
there exists a scheme $\Y$ projective and
flat (and then automatically smooth) over $\Spec\cR$ such that $\hat\Y$ is a completion of its special fiber.
By~\cite[5.1.8]{EGAIII1}, there exist closed subschemes $\cC_1, \ldots, \cC_r\subset\Y$ such that
$\hat\cC_1,\ldots,\hat\cC_r$ are completions of their special fibers. They are flat (and automatically smooth and proper)
over $\Spec\cR$.
\end{proof}

\begin{notation}
We revert to the notation of the previous sections;
in particular $\cR$ will denote the ring of Witt vectors of an algebraically closed field $k$ of characteristic~$7$.
We~denote by $Y$~the Boyd surface over $k$. The $(-4)$-curve
$\Delta_1$ and the $(-2)$-curve $N_1$ of $Y$ intersect transversally and in one point.
\end{notation}

\begin{lemma}\label{sdgsgs}
There exists a smooth projective family of surfaces $\Y\to\Spec\cR$
with closed subschemes $\cC, \N\subset\Y$ smooth and proper over $\Spec\cR$
such that their geometric fibers are transversal rational curves of self-intersection $-4$ and $-2$,
respectively. The special fiber is the Boyd surface $(Y, \Delta_1, N_1)$.
\end{lemma}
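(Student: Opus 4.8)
The plan is to deduce this statement directly from Lemma~\ref{ajhsdbafhjsfb}, applied to $Y$ the Boyd surface with $r=2$, $C_1=\Delta_1$ the $(-4)$-curve, and $C_2=N_1$ the $(-2)$-curve. First I would record that the three hypotheses of Lemma~\ref{ajhsdbafhjsfb} are met. The curves $\Delta_1$ and $N_1$ are smooth rational curves meeting transversally in a single point: this is exactly the content of the Notation block preceding the statement. The vanishing $H^2(Y, T_Y(-\log(\Delta_1+N_1)))=0$ is Theorem~\ref{julie}. Finally, $H^2(Y,\O_Y)=0$ because $Y$ is a Dolgachev surface by Lemma~\ref{DDDD7}; concretely, $h^2(\O_Y)=p_g(Y)=0$ by equation~\eqref{pg0}.

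Granting these, Lemma~\ref{ajhsdbafhjsfb} produces a smooth projective family $\Y\to\Spec\cR$ together with closed subschemes $\cC_1,\cC_2\subset\Y$, smooth and proper over $\Spec\cR$, whose special fibers are $(Y;\Delta_1,N_1)$. Set $\cC=\cC_1$ and $\N=\cC_2$. It remains to identify the geometric fibers of $\cC$ and $\N$ over the generic point. Since $\cC_i\to\Spec\cR$ is smooth and proper of relative dimension $1$, each geometric fiber is a smooth projective curve; its arithmetic genus is locally constant over the connected base $\Spec\cR$, so every geometric fiber of $\cC$ and of $\N$ is a smooth rational curve. Likewise, the self-intersection of $\cC_i$ restricted to a fiber of $\Y\to\Spec\cR$ is locally constant (it is computed as a degree of a line bundle on $\cC_i$, which is constant in the flat proper family), hence equals $-4$ for $\cC$ and $-2$ for $\N$ on every geometric fiber; transversality of the intersection $\cC\cap\N$ in each fiber is an open condition that holds on the special fiber, hence on all of $\Spec\cR$. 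This gives all the claimed properties.

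I expect no genuine obstacle here: the statement is essentially a specialization of Lemma~\ref{ajhsdbafhjsfb} once Theorem~\ref{julie} is in hand, and that theorem has already been proved. The only point requiring a word of care is the constancy of the numerical invariants (genus, self-intersection, transversality) of the relative curves along the family, which follows from standard flatness and semicontinuity arguments together with the fact that $\Spec\cR$ is the spectrum of a local ring, so ``locally constant'' means ``constant.''
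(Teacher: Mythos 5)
Your proposal is correct and follows exactly the paper's own (very terse) proof, which likewise cites Theorem~\ref{julie}, equation~\eqref{pg0}, preservation of intersection numbers, and Lemma~\ref{ajhsdbafhjsfb}. You have merely spelled out the routine verifications (constancy of genus, self-intersection, and transversality along the family over the local base) that the paper leaves implicit.
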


\begin{proof}
This follows from Theorem~\ref{julie}, \eqref{pg0}, preservation of intersection numbers,
and  Lemma~\ref{ajhsdbafhjsfb}.
\end{proof}

We need a few facts about the ${1\over4}(1,1)$ singularity.
Let $\mu_4$ be the $\Z$-group scheme $\Spec\Z[\iota]/(\iota^4-1)$ with
comultiplication $\iota\to\iota\otimes\iota$.
Let $$\bX=\Spec \Z[u,v]^{\mu_4}=\Spec\Z[u^4, u^3v,u^2v^2,uv^3,v^4],$$
where $\mu_4$ acts on $\A^2$ with weights $(\iota,\iota)$.
For any scheme~$S$, we say that $\bX_S\to S$ is the standard family of surfaces with ${1\over4}(1,1)$ singularity.
If $k$ is a field then $\bX_k$ is isomorphic to the cone over the rational normal curve in~$\P^4_{ k}$.

\begin{definition}
Let $S$ be a locally Noetherian scheme and let $\X\to S$ be a flat family of geometrically connected reduced
surfaces smooth outside of a section $\Sigma:\,S\to\X$.
We say that $\X\to S$ has a ${1\over4}(1,1)$ singularity along~$\Sigma$ if there exists
a (not necessarily cartesian) commutative diagram
$$\begin{CD}
\X' @>g>> \X\\
@VVV @VVV\\
S'@>f>> S\\
\end{CD}$$
of morphisms with commuting sections $\Sigma$ and $\Sigma':\,S'\to\X'$
such that $f$ is surjective \'etale, $g$ is \'etale, and $\X'$ is isomorphic to an \'etale neighborhood of the section in the standard family $\bX_{S'}$.
\end{definition}

\begin{lemma}\label{kjbkhbk}
Let $\X\to S$ be a flat family of geometrically connected reduced surfaces with a section $\Sigma:\,S\to\X$ over a locally Noetherian base scheme $S$ and
smooth outside of $\Sigma$. Then
$\X$ has ${1\over4}(1,1)$ singularity along $\Sigma$ if and only if there exists a morphism $\pi:\,\Y\to\X$ over $S$ such that $\Y\to S$
is smooth, $\pi$ is an isomorphism outside of $\Sigma$, and $\P=\pi^{-1}(\Sigma)$ is a $\P^1$-bundle over $S$
such that all geometric fibers have
self-intersection~$-4$. In this case  $\X\to S$ satisfies the Koll\'ar condition.
\end{lemma}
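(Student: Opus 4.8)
The plan is to prove both implications \'etale-locally on $S$, reducing the statement to two rigidity facts---that the canonical resolution of the standard model $\bX$ commutes with base change, and that a negatively embedded smooth rational curve has a linearizable formal neighborhood---and then to read off the Koll\'ar condition from the resulting local normal form.

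\emph{``Only if''.} Suppose $\X$ has a ${1\over4}(1,1)$ singularity along $\Sigma$, and put $\Y:=\mathrm{Bl}_{\I_\Sigma}\X$. Blowing up commutes with flat base change, so after pulling back along the \'etale map $\X'\to\X$ of the definition, $\Y$ is computed on an \'etale neighborhood of the vertex section in $\bX_{S'}$; and blowing up the vertex section of $\bX_{S'}$ produces exactly the total space of $\O_{\P^1_{S'}}(-4)$ (the classical resolution of the cone over the rational normal quartic, manifestly compatible with the base change $S'\to\Spec\Z$). Hence $\Y\to S$ is smooth, $\pi$ is an isomorphism away from $\Sigma$, and $\P=\pi^{-1}(\Sigma)$ is a $\P^1$-bundle over $S$ whose geometric fibers are $(-4)$-curves---the self-intersection equals the fibrewise degree of $\N_{\P/\Y}$, which is locally constant and is read off in the standard model.

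\emph{``If''.} Given $\pi\colon\Y\to\X$ as in the statement, the property in question is \'etale-local on $S$, so we may replace $S$ by an \'etale cover over which both the $\P^1$-bundle $\P$ and the line bundle $\N_{\P/\Y}$ are trivial; then $\P\cong\P^1_S$ and $\N_{\P/\Y}\cong\O_{\P^1_S}(-4)$. The first claim is that the formal completion of $\Y$ along $\P$ is isomorphic over $S$, compatibly with $\P$, to the formal completion along the zero section of the total space $V$ of $\O_{\P^1_S}(-4)$. One builds this isomorphism by induction over the infinitesimal neighborhoods of $\P$; the obstruction to passing from the $n$-th to the $(n+1)$-st step, and the indeterminacy in the choices, are governed by the relative cohomology of $T_{\Y/S}|_{\P}\otimes(\N_{\P/\Y}^\vee)^{\otimes n}$, $n\ge1$, a sheaf which on each fibre $\cong\P^1$ is an extension of line bundles of non-negative degree (here one uses $\N_{\P/\Y}^\vee\cong\O_{\P^1}(4)$), so that $H^1$ vanishes; by cohomology and base change the relevant higher direct image vanishes, and all obstructions die. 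Contracting $\P$ (respectively the zero section of $V$)---using that $\X$ is normal (by Serre's criterion, since $\X$ is Cohen--Macaulay and smooth off the codimension-two section $\Sigma$), so that $\O_\X=\pi_*\O_\Y$ near $\Sigma$, over which $\pi$ is proper because its fibre $\P$ over $\Sigma$ is---identifies the formal completion of $\X$ along $\Sigma$ with that of $\bX_S$ along its vertex section. By Artin's approximation theorem this formal isomorphism of finite-type $S$-schemes comes from an isomorphism over a common \'etale neighborhood, which is precisely what the definition of a ${1\over4}(1,1)$ singularity along $\Sigma$ requires.

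\emph{The Koll\'ar condition.} That the formation of each reflexive power $\omega_{\X/S}^{[n]}$ commutes with arbitrary base change may again be checked \'etale-locally on $\X$ and $S$, hence reduces to the standard family $\bX_{S'}$. Writing $\bX=\A^2_{u,v}/\mu_4$ with $\mu_4$ acting with weights $(1,1)$, the sheaf $\omega_{\bX}^{[n]}$ is the weight-$2n$ semi-invariant submodule of $\O_{\A^2}\cdot(du\wedge dv)^{\otimes n}$; in particular $\omega_{\bX}^{[2]}$ is invertible, and since $\bX_{S'}\to S'$ is the base change of $\bX\to\Spec\Z$, the formation of every $\omega_{\bX_{S'}/S'}^{[n]}$ commutes with further base change. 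The main obstacle is the ``if'' direction: establishing the linearization of the formal neighborhood of $\P$ \emph{relatively over $S$}, with the cohomological vanishing holding in families and in mixed characteristic, and then algebraizing this formal isomorphism to an \'etale-local isomorphism of $\X$ with the standard family.
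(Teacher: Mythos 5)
Your argument takes a genuinely different route from the paper's. For the ``if'' direction the paper localizes to spectra of Henselian local rings and then invokes Lee--Nakayama (Th.~2.13 of [LN]), so that everything reduces to producing two relative Cartier divisors on $\X$ whose traces on $\P$ are disjoint sections of the $\P^1$-bundle; these are obtained from surjectivity of $\Pic\X\to\Pic\P^1_s$. You instead reprove the local structure theorem from scratch: formal linearization of the neighborhood of $\P$ in $\Y$ using the fibrewise vanishing of $H^1$ of $T_{\Y/S}|_{\P}\otimes(\N_{\P/\Y}^\vee)^{\otimes n}$ (the degree computation $0\to\O(4n+2)\to\cdot\to\O(4n-4)\to0$ is correct for $n\ge1$), followed by contraction and algebraization. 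This is essentially a direct proof of the input the paper imports from [LN], and the ``only if'' direction and the Koll\'ar-condition computation agree in substance with the paper's.

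There is, however, a genuine gap in the contraction step. You justify $\O_\X=\pi_*\O_\Y$ by asserting that $\X$ is normal ``by Serre's criterion, since $\X$ is Cohen--Macaulay and smooth off $\Sigma$''. Neither premise is available: Cohen--Macaulayness of $\X$ is not a hypothesis of the lemma, and smoothness of $\X\to S$ away from $\Sigma$ says nothing about regularity of the total space in codimension one when $S$ itself is singular or non-reduced --- and bases such as $\Spec k[\epsilon]$ are precisely the ones the lemma must handle, since it feeds into deformation-theoretic arguments; over such a base $\X$ is nowhere reduced, let alone normal. The identification $\O_{\hat\X}\simeq\pi_*\O_{\hat\Y}$ is true, but it has to be extracted from the hypothesis that the \emph{scheme-theoretic} preimage $\pi^{-1}(\Sigma)$ is the reduced $\P^1$-bundle $\P$, i.e.\ $\I_\Sigma\O_\Y=\I_\P$: injectivity follows from flatness together with reducedness of the fibers, and surjectivity from $\pi_*\O_\P=\O_\Sigma$ and a filtration by powers of $\I_\Sigma$ (equivalently, that $\bigoplus_n H^0(\P^1,\O(4n))$ is generated in degree one). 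This hypothesis is not decorative: a pinched cone (e.g.\ $\Spec(k+\m^2)$ for $\m$ the vertex ideal of $\bX_k$) admits the same smooth $\pi$, an isomorphism off the vertex with exceptional set a $(-4)$-curve, but has $\pi^{-1}(\Sigma)$ a thickening of $\P$ and is not a ${1\over4}(1,1)$ singularity. A second, smaller point: Artin approximation as usually stated applies at a (Henselian local) point, so to produce the surjective \'etale $S'\to S$ and the \'etale $g:\X'\to\X$ required by the definition you should approximate at each point of $\Sigma$ and use that the definition only demands an \'etale cover. Both issues are repairable, but as written the key identification is not established.
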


\begin{proof}
 In one direction, we obtain $\Y$ by blowing up $\Sigma$. In the opposite direction, since the question is \'etale-local on $S$ and $\X$, we can assume that $\X$ and $S$
 are spectra of Henselian local rings.
 By \cite[Th. 2.13]{LN}, it suffices to find relative Cartier divisors $D_1$ and $D_2$ of $\X\to S$
 such that their scheme-theoretic intersections with $\P$ are disjoint sections of the $\P^1$-bundle.
 As in the proof of \cite[Th. 2.11]{LN}, their existence follows from surjectivity of $\Pic\X\to\Pic\P^1_s$ \cite[Cor. 21.9.12]{EGAIV}, where $s\in S$ is the closed point.
 Finally, $\bX_{S'}$ (being toric) and hence $\X$ satisfy the Koll\'ar condition.
\end{proof}

Recall that we have a contraction $Y\mathop{\to}\limits^\alpha S_0$ of $\Delta_1$ to a ${1\over 4}(1,1)$ singularity.

\begin{lemma}\label{asfvasfafsb}
We can ``blow down'' the deformation $\Y\to\Spec\cR$ of $Y$ to
the deformation $\bar\Y\to\Spec\cR$ of $S_0$, i.e. there exists a morphism $\Y\to\bar\Y$ of deformations over $\Spec\cR$
which on the special fiber gives $\alpha$.

This morphism contracts $\cC$ to a section $\Sigma$ of $\bar\Y\to\Spec\cR$ and it is an isomorphism outside $\Sigma$.
The family $\bar\Y\to\Spec\cR$ has a ${1\over 4}(1,1)$ singularity along $\Sigma$
and is smooth elsewhere. It is  $\Q$-Gorenstein.
\end{lemma}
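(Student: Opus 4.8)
The plan is to construct $\bar\Y$ as a genuine relative contraction of the $(-4)$-divisor $\cC\subset\Y$ (the family of Lemma~\ref{sdgsgs}), effected by one carefully chosen line bundle on the total space $\Y$, and then to read off the assertions about the $\tfrac14(1,1)$ singularity along the section directly from Lemma~\ref{kjbkhbk}, which is tailored for this.

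\emph{The line bundle and fibrewise semiampleness.} Since $\Y\to\Spec\cR$ is projective, fix an ample line bundle $H$ on $\Y$ and set $d:=H\cdot\cC_s>0$; this is independent of the point $s\in\Spec\cR$ because $\cC$ is flat over $\cR$. Put $\L:=\O_\Y(4H+d\,\cC)$. On every fibre $\Y_s$ the restriction $\L_s=\O_{\Y_s}(4H_s+d\,\cC_s)$ is nef and big with null locus exactly $\cC_s$: one has $\L_s\cdot\cC_s=4d+d\,\cC_s^2=0$ and $\L_s^2=16H_s^2+4d^2>0$, while for an integral curve $C\ne\cC_s$ one gets $\cC_s\cdot C\ge0$ and hence $\L_s\cdot C\ge 4\,H_s\cdot C>0$. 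The factor $4$ in front of the ample class is exactly what pins the null locus down to $\cC_s$ on all fibres at once. Moreover $\L_s$ is semiample: on the special fibre the contraction $\alpha\colon Y\to S_0$ of $\Delta_1$ gives $\L_0=\alpha^*M_0$ with $M_0=\alpha_*\L_0$ a $\Q$-Cartier Weil divisor on $S_0$, which is ample by Nakai--Moishezon (its self-intersection, and its intersection with any curve on $S_0$, are computed by $\L_0$ on the strict transform and are positive by the previous line); on the generic fibre $\Y_\eta$, which has characteristic $0$ since $\cR$ is a ring of Witt vectors, $\cC_\eta$ is a rational $(-4)$-curve in a smooth projective surface, its contraction produces a normal projective surface with one $\tfrac14(1,1)$ point (hence $\Q$-factorial) and smooth elsewhere, and the same Nakai--Moishezon argument exhibits $\L_\eta$ as the pull-back of an ample $\Q$-Cartier divisor, so $\L_\eta$ is again semiample.

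\emph{Globalising the contraction.} The $\tfrac14(1,1)$ singularity is rational, so $R^{\ge1}$ of the fibrewise contractions above vanishes; Serre vanishing then gives $m_0$ with $H^1(Y,\L_0^{\otimes m})=0$ for all sufficiently divisible $m\ge m_0$, and since $\Spec\cR$ is local, semicontinuity forces $H^1(\Y_\eta,\L_\eta^{\otimes m})=0$ as well. For such $m$, the sheaves $\pi_*\L^{\otimes mj}$ ($\pi\colon\Y\to\Spec\cR$) are locally free over $\cR$ with formation commuting with base change, the graded algebra $\AR=\bigoplus_{j\ge0}\pi_*\L^{\otimes mj}$ is finitely generated (it is so after $\otimes k$, hence by graded Nakayama), and we set $\bar\Y:=\mathrm{Proj}_{\Spec\cR}\AR$; using its normalisation if necessary — which does not change the special fibre, the normal surface $S_0$ — we may assume $\bar\Y$ is normal. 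Then $\bar\Y$ is flat and projective over $\Spec\cR$ with special fibre $S_0$ and generic fibre the surface found above. Fibrewise semiampleness of $\L$ makes $\L^{\otimes m}$ relatively globally generated, so the canonical rational map becomes a morphism $\phi\colon\Y\to\bar\Y$ over $\Spec\cR$; it is proper birational, hence $\phi_*\O_\Y=\O_{\bar\Y}$, and on each fibre it is the contraction of the null locus $\cC_s$. Therefore $\phi$ contracts $\cC$, is an isomorphism over $\bar\Y$ minus the reduced closed subset $\Sigma:=\phi(\cC)$ (Zariski's main theorem), and restricts on the special fibre to $\alpha$. Since $\Sigma$ meets every fibre in one point, it is a section of $\bar\Y\to\Spec\cR$.

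\emph{Conclusion and main obstacle.} Apply Lemma~\ref{kjbkhbk} with $S=\Spec\cR$, $\X=\bar\Y$ and the section $\Sigma$: we have checked that $\Y\to\Spec\cR$ is smooth, $\phi$ is an isomorphism outside $\Sigma$, and $\phi^{-1}(\Sigma)=\cC$ is a $\P^1$-bundle over $\Spec\cR$ (smooth and proper with geometric $\P^1$-fibres over a complete local base with algebraically closed residue field) all of whose geometric fibres have self-intersection $-4$ (Lemma~\ref{sdgsgs}). Hence $\bar\Y\to\Spec\cR$ has a $\tfrac14(1,1)$ singularity along $\Sigma$, is smooth elsewhere, and satisfies the Koll\'ar condition. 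The $\tfrac14(1,1)$ singularity is $\Q$-Gorenstein of index $2$, so $\omega_{\bar\Y/\cR}^{[2]}$ is invertible with formation commuting with base change, and $\bar\Y\to\Spec\cR$ is a $\Q$-Gorenstein family. Finally $\phi$ is the required blow-down: a morphism of deformations over $\Spec\cR$ restricting to $\alpha$ on the central fibre, contracting $\cC$ to the section $\Sigma$ and an isomorphism elsewhere. The bookkeeping of the relative section ring is routine; the one genuinely geometric input, beyond formal algebra, is the contractibility of the negative curve $\cC_\eta$ on the complex surface $\Y_\eta$ together with Nakai--Moishezon to see that $\L_\eta$ is semiample. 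The crux of the whole argument is the choice $\L=\O_\Y(4H+d\,\cC)$, which forces the null locus to be exactly $\cC$ on every fibre simultaneously, so that $\phi$ contracts nothing more than $\cC$.
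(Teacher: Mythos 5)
Your argument is correct, but it reaches the conclusion by a genuinely different mechanism than the paper. You build the blow-down as a relative $\mathrm{Proj}$: the class $\L=\O_\Y(4H+d\,\cC)$ is nef and big on every fibre with null locus exactly $\cC_s$, it is fibrewise semiample because the $(-4)$-curve contracts to a rational, $\Q$-factorial point so that the descended $\Q$-Cartier class is ample by Nakai--Moishezon, and after killing $H^1$ by Serre vanishing plus semicontinuity the relative section ring $\bigoplus_{j\ge0}\pi_*\L^{\otimes mj}$ is finitely generated with formation commuting with base change, so $\bar\Y=\mathrm{Proj}$ of it does the job. The paper instead argues formally: it defines $\hat{\bar\Y}$ as the ringed space $(S_0,\alpha_*\O_{\hat\Y})$ on the formal completion $\hat\Y$ of the special fibre, deduces flatness over $\Spf\cR$ from $R^1\alpha_*\O_Y=0$ via \cite[0.4.4]{W1}, lifts an ample line bundle using $H^2(S_0,\O_{S_0})=0$, algebraizes by Grothendieck's existence theorem \cite[5.4.5]{EGAIII1}, and obtains the morphism $\Y\to\bar\Y$ from full faithfulness of the formal-fibre functor. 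Both proofs rest on the same geometric input --- rationality of the $\frac14(1,1)$ point, i.e.\ $R^1\alpha_*\O_Y=0$ --- and both finish by quoting Lemma~\ref{kjbkhbk}. Your route buys an explicit relative polarization on $\bar\Y$ and does not in principle need $\cR$ complete, at the cost of the semiampleness and cohomology-and-base-change bookkeeping; the paper's route is shorter because over a complete local base the formal blow-down plus algebraization packages all of that at once. Two small points worth making explicit in your write-up: first, when invoking Lemma~\ref{kjbkhbk} take $\P=\phi^{-1}(\Sigma)_{\mathrm{red}}=\cC$, and note that $\cC\to\Spec\cR$ is a $\P^1$-bundle because it is smooth and proper with geometric fibres $\P^1$ over a complete local ring with algebraically closed residue field; second, the identity $\L_0=\alpha^*(\alpha_*\L_0)$ with $\alpha_*\L_0$ invertible uses that $\L_0$ restricted to the formal neighbourhood of $\cC_0$ is trivial, which follows from $\deg\L_0|_{\cC_0}=0$ and $H^1(\cC_0,\O_{\cC_0}(-k\cC_0))=0$.
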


\begin{proof}
 This follows from the fact that $R^1\alpha_*(\O_Y)=0$ as in \cite{W1} (where the equi-characteristic local case is worked out).
Specifically, let $\hat\Y$ be the formal completion of the special fiber in $\Y$.
Let $\hat{\bar\Y}$ be a formal scheme with underlying topological space $S_0$ and sheaf of rings $\alpha_*\O_{\hat\Y}$.
The vanishing of $R^1\alpha_*(\O_Y)$ implies that $\hat{\bar\Y}$ is flat over $\Spf\cR$ by
\cite[0.4.4]{W1}.
Since $H^2(S_0,\O_{S_0})=0$ and $S_0$ is projective, $\hat{\bar\Y}$ carries an ample line bundle,
and therefore is a formal fiber of a scheme $\bar\Y$ projective and flat over $\Spec\cR$,
by Grothendieck's existence theorem \cite[5.4.5]{EGAIII1}.
Since the formal fiber functor is fully-faithful \cite[5.4.1]{EGAIII1},
the morphism $\hat\Y\to\hat{\bar\Y}$ is induced by the morphism
$\alpha:\,\Y\to\bar\Y$. The rest follows from Lemma~\ref{kjbkhbk}.
\end{proof}

 \begin{lemma}\label{asvsfgsfg}
 The $\cR$-stack $\cK^\omega_\cR$ is smooth at $S_0\to\Spec k$
\end{lemma}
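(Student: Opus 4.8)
The goal is to show that the $\cR$-stack $\cK^\omega_\cR$ of Koll\'ar families is smooth at the point $S_0\to\Spec k$. Since $S_0$ has a single $\frac14(1,1)$ singularity and is smooth elsewhere, $\Q$-Gorenstein deformation theory of $S_0$ is controlled by a cotangent complex computation, and the key point is that the obstruction space vanishes. The standard approach is to use the local-to-global spectral sequence for $\Q$-Gorenstein deformations: the relevant obstruction space sits in an exact sequence relating $H^2$ of the global tangent sheaf $T_{S_0}$ (more precisely the reflexive tangent sheaf, or better, the pushforward from the resolution of the log tangent sheaf) to the local $\Q$-Gorenstein deformation space of the $\frac14(1,1)$ singularity and $H^2$ of the structure sheaf.

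\textbf{Key steps.} First I would pass to the minimal resolution $\alpha\colon Y\to S_0$ of the $\frac14(1,1)$ singularity, which contracts the $(-4)$-curve $\Delta_1$. The $\frac14(1,1)$ singularity is a \emph{$\Q$-Gorenstein smoothable} cyclic quotient singularity of class $T$ (indeed it is the $n=2$ case of the $\frac{1}{n^2}(1,na-1)$ family), so its local $\Q$-Gorenstein deformation functor is smooth (unobstructed) and one-dimensional. Thus, by the usual argument (as in \cite{Ha}, and extended to mixed characteristic in \cite{AH}), local $\Q$-Gorenstein obstructions vanish and global obstructions to $\Q$-Gorenstein deformations of $S_0$ are computed by $H^2$ of the sheaf that, on the resolution $Y$, is $T_Y(-\log\Delta_1)$ — one takes logarithmic poles along the exceptional $(-4)$-curve to match the $\Q$-Gorenstein (rather than arbitrary) deformations of the singularity. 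Next I would invoke Theorem~\ref{julie}, which gives $H^2(Y, T_Y(-\log(\Delta_1+N_1)))=0$; since $T_Y(-\log\Delta_1)$ receives a surjection from $T_Y(-\log(\Delta_1+N_1))$ with cokernel supported on $N_1$ (a curve, so its $H^2$ vanishes), we get $H^2(Y, T_Y(-\log\Delta_1))=0$. Together with $H^2(Y,\O_Y)=H^2(S_0,\O_{S_0})=0$ from \eqref{pg0} (and $R^1\alpha_*\O_Y=0$), this kills both the obstruction to deforming the pair and the obstruction to lifting an ample line bundle / verifying the Koll\'ar condition in families. Finally I would note that smoothness over $\Spec\cR$ (as opposed to over $k$) follows because the obstruction to lifting from $\cR_n$ to $\cR_{n+1}$ lies in the same cohomology group $H^2(Y,T_Y(-\log\Delta_1))$ by flatness of the base change, so vanishing in one characteristic suffices; this is precisely the mixed-characteristic version of the argument used for Lemma~\ref{ajhsdbafhjsfb}.

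\textbf{Main obstacle.} The routine calculations (computing $T_{S_0}$-cohomology in terms of the resolution, checking that logarithmic poles along $\Delta_1$ correctly cut out the $\Q$-Gorenstein, as opposed to all, deformations) are standard but need care; the genuine content is already packaged in Theorem~\ref{julie}, whose proof is the hard computational heart of the paper. The one point requiring attention is the correct identification of the $\Q$-Gorenstein deformation space of the $\frac14(1,1)$ singularity with $H^0$ of local log tangent cohomology on the resolution — i.e.\ matching the deformation-theoretic normalization of \cite{AH} with the geometric picture of the $(-4)$-curve — and checking that the Koll\'ar condition is automatic here, which is handled by Lemma~\ref{kjbkhbk} (the $\frac14(1,1)$ family is toric, hence satisfies the Koll\'ar condition) so that $\cK^\omega_\cR$ really is the relevant moduli functor and its smoothness is governed by the vanishing above.
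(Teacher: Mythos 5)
Your proposal is correct and follows essentially the same route as the paper, which deduces smoothness from Theorem~\ref{julie} by citing \cite[Prop. 6.4]{W} and \cite[Th. 4.6]{LN}: the map $\Def X\to\Def^{loc}X$ is smooth once the global obstruction group (identified with $H^2(Y,T_Y(-\log\Delta_1))$, which vanishes since it is a quotient of $H^2(Y,T_Y(-\log(\Delta_1+N_1)))$ by a group supported on the curve $N_1$) dies, and the local $\Q$-Gorenstein deformations of the $\frac14(1,1)$ singularity are unobstructed. Your additional remarks on reducing $\cR$-smoothness to smoothness of the special fiber and on the Koll\'ar condition match the paper's (terser) treatment.
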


\begin{proof}
It suffices to prove that the special fiber of $\cK^\omega_\cR$, i.e. the algebraic stack of Koll\'ar families over $k$,
is smooth at $S_0\to\Spec k$. There are several ways to deduce this from Theorem~\ref{julie}.
One is to use the theory of index one covers as in \cite[Section 3]{Ha} (which assumes characteristic $0$ but in our case this is not important
because the index of the singularity $2$ is not divisible by the characteristic $7$).
One can also mimic calculations in \cite{Ha} in the setting of \cite{AH}.
Finally, one can apply \cite[Prop. 6.4]{W} (or \cite[Th. 4.6]{LN}), which shows that
the morphism of deformation functors of artinian rings $\Def X \to\Def^{loc}X$ is smooth
and that local $\Q$-Gorenstein deformations of a ${1\over4}(1,1)$-singularity are unobstructed.
\end{proof}

Let $(D; \Gamma, N)$ be the general fiber of the family $(\Y; \cC, \N)\to\Spec\cR$
after pull-back to $\Spec\C$.
Let $D\to D_0$ be the contraction of $\Gamma$.
Here $D_0$ is the general fiber of $\bar\Y\to\Spec\cR$ (after pull-back to $\Spec\C$).

\begin{theorem}\label{corollar}
There exists a $\Q$-Gorenstein family of complex surfaces $\bS\to U$ over a smooth irreducible complex curve
such that one of the fibers is $D_0$ and another fiber is the Craighero--Gattazzo surface $S$.
\end{theorem}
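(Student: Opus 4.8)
The plan is to apply Theorem~\ref{abstract} to the two $\Q$-Gorenstein families over $\Spec\cR$ whose special fibers are both $S_0$. The first family is $\cX_1=\bar\Y\to\Spec\cR$ constructed in Lemma~\ref{asfvasfafsb}: by Lemma~\ref{asfgasrgarg} (i.e.~Lemma~\ref{mainboydthing}) its generic fiber, after pull-back to~$\C$, is the Craighero--Gattazzo surface~$S$, and it is $\Q$-Gorenstein with a ${1\over4}(1,1)$ singularity along a section. The second family $\cX_2$ is obtained from Lemma~\ref{sdgsgs}: take the smooth family $\Y\to\Spec\cR$ deforming $(Y,\Delta_1,N_1)$, discard $\N$, and blow down $\cC$ to get $\bar\Y\to\Spec\cR$ exactly as in Lemma~\ref{asfvasfafsb}. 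Wait---this is literally the same~$\bar\Y$. So instead I should note that $\cX_1$ and $\cX_2$ are both allowed to be \emph{this same} family $\bar\Y$, or, more to the point, that Theorem~\ref{abstract} does not require the two families to be distinct: applying it with $\cX_1=\cX_2=\bar\Y$ and $X=S_0$ already produces a curve $C$ with two marked points whose fibers are $S_{\oK}$ and $S_{\oK}$. That is not what we want. The correct reading is that $\cX_1=\bar\Y$ (with generic fiber $S$) and $\cX_2$ is the \emph{trivial} family $S_0\times\Spec\cR$, whose generic fiber is $(S_0)_{\oK}$, i.e.~the base change of $D_0$ to $\oK$.

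So the key steps are as follows. First, verify the hypotheses of Theorem~\ref{abstract}: the residue field $k$ is algebraically closed, $\cR$ is a complete DVR (the Witt vectors of~$k$), and $\cK^\omega_\cR$ is $\cR$-smooth at $S_0\to\Spec k$, which is precisely Lemma~\ref{asvsfgsfg}. Second, exhibit the two $\Q$-Gorenstein families over $\Spec\cR$ with special fiber $S_0$: take $\cX_1=\bar\Y$ from Lemma~\ref{asfvasfafsb}, whose generic geometric fiber is $S_{\oK}$ by Lemma~\ref{asfgasrgarg}, and $\cX_2=S_0\times_{\Spec k}\Spec\cR$, the constant family (which is trivially $\Q$-Gorenstein since $S_0$ has a ${1\over4}(1,1)$ singularity and this base change is the standard one), whose generic geometric fiber is $(S_0)_{\oK}$. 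Third, apply Theorem~\ref{abstract} to obtain a smooth irreducible $\oK$-curve~$C$, points $y_1,y_2\in C$, and a $\Q$-Gorenstein family $\S_C\to C$ with fiber over $y_1$ isomorphic to $S_{\oK}$ and fiber over $y_2$ isomorphic to $(S_0)_{\oK}$. Fourth, descend from $\oK$ to~$\C$: since all the schemes and the curve $C$ are of finite type, they are defined over a subfield of $\oK$ finitely generated over $\Q$, which embeds in~$\C$; after such an embedding, base-change the family to~$\C$, so that over a suitable point the fiber becomes~$D_0$ (whose defining property was precisely that it is the general fiber of $\bar\Y$ pulled back to~$\C$) and over another point it becomes~$S$ (noting that the Craighero--Gattazzo surface is defined over the number field $\Q(r)\subset\C$, so ``$S_{\oK}$ descended to $\C$'' really is~$S$). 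Finally, shrink $C$ to a smooth irreducible affine complex curve $U$ containing both marked points and call the restricted family $\bS\to U$.

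The main obstacle is the last step, the descent from $\oK$ to~$\C$ together with the identification of the resulting fibers with the \emph{specific} complex surfaces $D_0$ and~$S$ rather than merely with abstract surfaces deformation-equivalent to them. The point is that $\oK=\overline{\Frac(\cR)}$ is an enormous field of characteristic zero; choosing an embedding $\oK\hookrightarrow\C$ is harmless for producing \emph{some} complex family, but one must check that under this embedding the fiber over $y_1$ becomes the Craighero--Gattazzo surface as originally defined. This works because $S$ arises from the quintic \eqref{QUIN1} with coefficients in $\Q(r)$, and the family $\bar\Y$ over $\Spec\cR$ was constructed in Lemma~\ref{asfgasrgarg} precisely as (the blow-down of a resolution of) the family~$\hat\X$ built from the equation~\eqref{ranafamily}, whose generic fiber over $\Frac(\cR)$ is the base change of the $\Q(r)$-surface~$X$; hence its generic geometric fiber, descended to any characteristic-zero field containing $\Q(r)$, is~$S$. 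Similarly $D_0$ is \emph{defined} as the pull-back to~$\C$ of the general fiber of $\bar\Y$, so the identification over~$y_2$ is automatic once we arrange the embedding $\oK\hookrightarrow\C$ to be compatible with the one used to define~$D_0$; any two such embeddings differ by an automorphism of~$\C$, under which all relevant properties (having a ${1\over4}(1,1)$ singularity, minimal resolution a Dolgachev surface, etc.) are preserved, so it suffices to rename. Once this bookkeeping is done, restricting to an affine neighborhood $U$ of $\{y_1,y_2\}$ in~$C$ gives the desired~$\bS\to U$.
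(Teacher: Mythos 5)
Your proposal goes off the rails at the point where you identify the two families to feed into Theorem~\ref{abstract}, and the patch you apply (the ``constant family'') does not exist. The intended input is genuinely two \emph{different} $\Q$-Gorenstein families over $\Spec\cR$ with the same special fiber $S_0$: first, the family $\S\to\Spec\cR$ of Lemma~\ref{asfgasrgarg} (= Lemma~\ref{mainboydthing}), built from the explicit arithmetic quintic \eqref{QUIN1} via the weighted-projective trick, whose generic fiber pulled back to $\C$ is the Craighero--Gattazzo surface $S$; and second, the family $\bar\Y\to\Spec\cR$ of Lemma~\ref{asfvasfafsb}, built by deformation theory from the Boyd surface (Theorem~\ref{julie}, Lemma~\ref{ajhsdbafhjsfb}, Lemma~\ref{sdgsgs}) and blowing down $\cC$, whose generic fiber pulled back to $\C$ is $D_0$ \emph{by definition}. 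You instead assert that the generic fiber of $\bar\Y$ is $S$, which is false ($D_0$ is singular, $S$ is smooth; they merely share the specialization $S_0$), and having thus collapsed the two families into one, you are forced to invent a second family.

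The second family you propose, $S_0\times_{\Spec k}\Spec\cR$, is not a thing: $\cR$ is the ring of Witt vectors of $k$, of mixed characteristic $(0,7)$, so $\Spec\cR$ is not a $k$-scheme and no such fiber product over $\Spec k$ exists as a scheme over $\Spec\cR$. There is no ``trivial'' lift of a characteristic-$7$ surface across the $7$-adic disc; producing \emph{any} flat lift of $S_0$ to $\Spec\cR$ is precisely the content of Sections~\ref{boydgeo1} and~\ref{DefSec} and requires the obstruction vanishing $H^2(Y,T_Y(-\log(\Delta_1+N_1)))=0$. Relatedly, ``$(S_0)_{\oK}$'' is meaningless since $S_0$ lives in characteristic $7$ and $\oK$ in characteristic $0$, and in particular it cannot equal $D_0$. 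Once you replace your $\cX_1,\cX_2$ by the correct pair $(\S,\bar\Y)$ and invoke Lemma~\ref{asvsfgsfg} for the smoothness hypothesis, the rest of your argument --- checking the hypotheses of Theorem~\ref{abstract} and descending from $\oK$ to $\C$ via a choice of embedding compatible with the definitions of $S$ and $D_0$ --- is sound and matches what the paper leaves implicit.
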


\begin{proof}
This follows from Theorem~\ref{abstract} and Lemmas~\ref{asfvasfafsb}, \ref{asfgasrgarg}, and \ref{asvsfgsfg}.
\end{proof}

\begin{figure}[htbp]
\includegraphics[width=9.3cm]{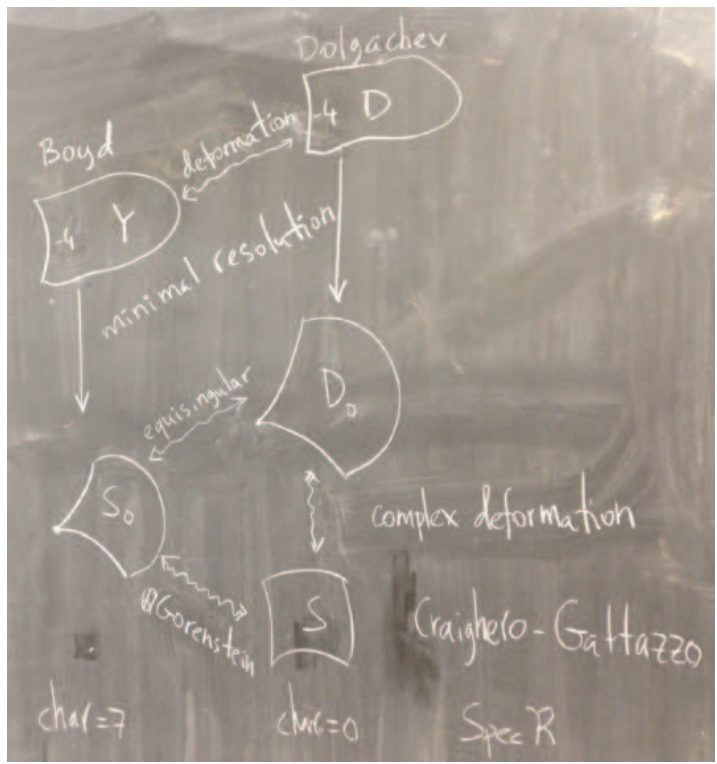}
\caption{The big picture}
\label{f2}
\end{figure}

The following corollary (of the proof) was first proved in \cite[Th.~0.31]{CP}.

\begin{porism}
The Craighero--Gattazzo surface is unobstructed and its local moduli space
is smooth of dimension $8$.
\end{porism}

\begin{proof}
Since the stack of Koll\'ar families $\cK^\omega_\cR$ is $\cR$-smooth at $S_0\to\Spec k$, the stack
$\cK^\omega_\C$ is $\C$-smooth at $S\to\Spec\C$. But in the neighborhood of a smooth surface such as $S$,
$\cK^\omega_\C$ can be identified with the Deligne--Mumford stack of Gieseker families of
canonically polarized surfaces with canonical singularities.
\end{proof}


\section{Calculation of the fundamental group}

\begin{proposition}\label{DolgTh}
The surface $D$ is a complex Dolgachev surface with multiple fibers of multiplicity $2$ and $3$.
In particular, $\pi_1(D)=1$. \label{trivialfun}
\end{proposition}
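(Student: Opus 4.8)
The plan is to show that $D$ is a Dolgachev surface by transporting all the relevant numerical and structural data from the Boyd surface $Y$ across the $\Q$-Gorenstein family $(\Y;\cC,\N)\to\Spec\cR$ constructed in Lemma~\ref{sdgsgs}, and then to invoke the standard fact that a complex elliptic surface over $\P^1$ with $p_g=q=0$ and two multiple fibers of coprime multiplicities $2,3$ is simply-connected. First I would record that, since $\Y\to\Spec\cR$ is a smooth proper family, all the numerical invariants computed in Lemma~\ref{DDDD7} are preserved: $K_D^2=0$, $\chi(\O_D)=1$, $p_g(D)=q(D)=0$, so $D$ is a minimal properly elliptic surface (it is minimal because $K_Y$ is nef by Lemma~\ref{nefness} and nefness of the canonical class is preserved in the family, or alternatively because $D$ has no $(-1)$-curves as $K_D^2=0$ and $\chi=1$ forces $\kappa(D)\ge 0$). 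The key point is that $D$ carries a genuine elliptic fibration $D\to\P^1_\C$: by Lemma~\ref{DDDD7} the surface $Y$ has an elliptic pencil $|F|$ with $h^0(F)=2$ and $F^2=0$, and since $h^0(\O_{Y_s}(F))$ and $h^1$ are locally constant in the flat family (the relevant line bundle deforms because $H^2(Y,\O_Y)=0$), the generic fiber $D$ inherits a base-point-free pencil of curves of arithmetic genus $1$, hence an elliptic fibration $\phi\colon D\to\P^1$.

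Next I would pin down the multiple fibers. In the family the two special fibers $\Gamma_2$ (the $I_4$ of multiplicity $2$) and $\Gamma_3$ (the $I_4$ of multiplicity $3$) of Lemma~\ref{DDDD7} deform to fibers of $\phi$; multiplicities of fibers in an elliptic fibration are deformation invariants (they are read off from the multiplicity of the fiber as a divisor, equivalently from the torsion of the normal bundle of a reduced component, all of which are locally constant), so $D$ has at least one multiple fiber of multiplicity $2$ and one of multiplicity $3$. By the canonical bundle formula for elliptic surfaces, $K_D \equiv (2g(\P^1)-2+\chi(\O_D))F + \sum (m_i-1)F_i = -F + \sum(m_i-1)F_i$; feeding in the two known multiplicities and using $\chi(\O_D)=1$ forces $K_D\equiv \frac16 F$ exactly as in \eqref{KBoyd}, which shows there are no further multiple fibers and that the multiple-fiber multiplicities are exactly $\{2,3\}$. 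Hence $D$ is a Dolgachev surface with multiple fibers of multiplicity $2$ and $3$ by definition.

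For the fundamental group, I would use the classical presentation for the fundamental group of an elliptic surface over $\P^1$ with no multiple fibers removed (for a rational elliptic surface, or more precisely for a surface with $\chi(\O)=1$ obtained from a rational elliptic surface by logarithmic transformations): after performing two logarithmic transformations of coprime orders $2$ and $3$ on a rational elliptic surface (which is simply-connected), Van Kampen's theorem gives $\pi_1(D)$ as a quotient of $\Z/2 * \Z/3$ by the relation identifying the two multiple-fiber loops with the generators, and the coprimality $\gcd(2,3)=1$ collapses this to the trivial group; this is Dolgachev's original computation, see \cite[Ch.~V]{CD}. Concretely: a Dolgachev surface with multiplicities $p,q$ has $\pi_1 \cong \Z/\gcd(p,q)$, so here $\pi_1(D)=\Z/1=1$. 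The main obstacle in this argument is not the topology at the end — that is standard — but making sure that the elliptic fibration genuinely survives to the generic complex fiber $D$ and that its two multiple fibers of multiplicities $2$ and $3$ are genuinely there; this is where $H^2(Y,\O_Y)=0$ (so the pencil lifts) and the deformation-invariance of fiber multiplicities do the real work, and one should remark that \emph{a priori} the special fibers of $\phi$ could degenerate (e.g. the $I_4$'s could become $I_n$'s for other $n$), but this does not affect either the multiplicities or the canonical bundle formula and hence does not affect the conclusion.
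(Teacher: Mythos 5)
Your proposal runs in the opposite logical direction from the paper's proof, and the two steps on which it pivots are both genuinely gapped. The first gap is the survival of the elliptic pencil. You assert that $h^0(\O_{Y_s}(F))$ and $h^1$ are locally constant in the family $\Y\to\Spec\cR$ because the line bundle deforms (which indeed follows from $H^2(Y,\O_Y)=0$). But cohomology is only upper semicontinuous: what is constant is $\chi(\O_{Y_s}(F))=1$, and since $h^0(Y,F)=2$, $h^1(Y,F)=1$ on the special fiber, semicontinuity permits $h^0=1$, $h^1=0$ on the generic fiber, in which case there is no pencil and no fibration. The same problem afflicts $|6K|$ (which is the relevant system, as $6K_Y\sim F$ by \eqref{KBoyd}); deformation invariance of plurigenera is a characteristic-zero theorem and is not available over the mixed-characteristic base $\Spec\cR$. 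Your fallback for minimality is also false as stated: $K^2=0$ and $\chi=1$ do not force $\kappa\ge 0$ (rational elliptic surfaces satisfy both). The paper produces the elliptic fibration on $D$ by an entirely different mechanism, namely the Enriques classification over $\C$: $D$ is not rational because $K_{D_0}$ is nef (Lemma~\ref{nefness} transported via Theorem~\ref{corollar}), $\kappa(D)=0$ is excluded by $K_D\cdot\Gamma=2$, and $\kappa(D)=2$ is excluded by Kawamata's theorem on degenerations to quotient singularities. This is where the $(-4)$-curve $\Gamma$ and the nefness statement do the real work; your argument uses neither for this purpose.

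The second gap is the claim that fiber multiplicities are deformation invariants. Even granting that the fibration extends to a morphism $\Y\to\P^1_{\cR}$, the assertion that the two non-reduced $I_4$ fibers remain non-reduced with the same multiplicities on the generic fiber is not a citable general principle in mixed characteristic and is precisely what has to be proved; the multiplicities are not determined by $K^2,\chi,p_g,q$, as logarithmic transformations show, and the non-reduced locus of $\Y\to\P^1_\cR$ need not be flat over $\cR$. The paper's logic is reversed: it first proves $\pi_1(D)=1$ --- using the Dolgachev--Werner theorem $\pi_1^{\mathrm{alg}}(S)=1$, transported to $\pi_1^{\mathrm{alg}}(D_0)=1$ through the complex family of Theorem~\ref{corollar}, then to $\pi_1^{\mathrm{alg}}(D)=1$ by Koll\'ar, and finally upgraded to $\pi_1(D)=1$ by residual finiteness of Xiao's presentation --- and only afterwards deduces that there are exactly two multiple fibers, with coprime multiplicities pinned down to $\{2,3\}$ by solving $\lambda(m_1m_2-m_1-m_2)=2$ from $K_D\cdot\Gamma=2$. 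Your proposal never invokes $\pi_1^{\mathrm{alg}}(S)=1$, which is the essential topological input of the whole paper; without it, and without valid substitutes for the two deformation claims above, the argument does not close. (The explicit lifting of the fibration that you are implicitly hoping for is carried out in Theorem~\ref{Camped}, but by lifting the branch-curve configuration on $\P^1\times\P^1$, not by abstract semicontinuity, and for a possibly different lift of $Y$ than the one connected to $S$.)
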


\begin{proof}
We first claim that
\begin{equation}\label{zfbfbzfbzfb}
\pi_1^{\text{alg}}(D_0)=1.
\end{equation}
We are going to use that $\pi_1^{\text{alg}}(S)=1$ \cite{DW}.
Since this is the only fact about $S$ that we need,
we can shrink the curve $U$ from Theorem~\ref{corollar} and without loss of generality assume that $U$ is a complex disc.
Since $\S$ contracts onto $D_0$, we have $\pi_1(\S)=\pi_1(D_0)$.
Now using the same argument as in \cite[p.601]{Xiao91},
we have an exact sequence
$$\pi_1(S) \to \pi_1(\S) \to \pi_1(U) \to 1,$$
and so $\pi_1(S)$ surjects onto $\pi_1(D_0)$.
The right exactness of profinite completions \cite[Prop.3.2.5]{RZ10}
implies that $\pi_1^{\text{alg}}(S)$ surjects onto $\pi_1^{\text{alg}}(D_0)$, which implies \eqref{zfbfbzfbzfb}.
Alternatively, surjectivity of $\pi_1^{\text{alg}}(S)\to\pi_1^{\text{alg}}(D_0)$ follows from the Grothendieck's specialization theorem  \cite[Cor. 2.3]{SGA1}.

We have $K_D^2=0$.
By Lemma~\ref{nefness} and Corollary~\ref{corollar}, $K_{D_0}$ is nef.
Therefore, $D$ is not rational. Indeed, if $D$ is rational, then by Riemann-Roch
$h^0(D,-K_D) \geq 1$ and so $-K_D \sim E \geq0$.
Since $K_D \cdot \Gamma = 2$, we have $\Gamma \subset E$.
We know that $f^*(2K_{D_0}) \sim -2E+\Gamma$ where $f \colon D \to D_0$ is the minimal resolution. But $E \neq \Gamma$, and so $f^*(2K_{D_0})$ cannot be nef.
Also, the Kodaira dimension of $D$ cannot be $0$ because of the Enriques classification
and $K_D \cdot \Gamma = 2$, and cannot be $2$ because of Kawamata's argument \cite{K92} (see \cite[Lemma 2.4]{R}).
Therefore the Kodaira dimension is $1$, and so $D$ is an elliptic fibration over $\P^1$ (since $q(D)=0$).

Say we have $r$ multiple fibers of multiplicities $m_1,\ldots,m_r$. By \cite[p.601]{Xiao91},
$$\pi_1(D)\simeq \langle a_1,\ldots,a_r : a_1\cdots a_r=a_1^{m_1}=\ldots=a_r^{m_r}=1 \rangle.$$
But this group is residually finite (see \cite[p.126]{LySch77} and \cite[p.141 last paragraph]{LySch77}).
We also have $\pi_1^{\text{alg}}(D)=\pi_1^{\text{alg}}(D_0)$
(see \cite{Ko93}), and so by the above we get $\pi_1(D)=1$.
This implies that there are only two multiple fibers $m_1 F_1, m_2 F_2$ with coprime multiplicities $m_1,m_2$. Let $F$ be a general fiber of $D \to \P^1$, and let $\Gamma \cdot F =d$. Then, since $K_D \sim -F + (m_1-1)F_1 + (m_2-1) F_2$, we have $\Gamma \cdot K_D= d- \frac{d}{m_1} - \frac{d}{m_2} =2$. In addition, since $\Gamma \cdot F_1=\frac{d}{m_1}$ and $\Gamma \cdot F_2=\frac{d}{m_2}$, we have $d=\lambda m_1 m_2$, and so $\lambda (m_1 m_2-m_1 -m_2)=2$. The only possible solutions, up to permuting $1$ and $2$, are $\lambda=2$, $m_1=2$, $m_2=3$.
\end{proof}

\begin{theorem}\label{finalresult}
$\pi_1(S)=1$.
\end{theorem}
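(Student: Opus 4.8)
The plan is to run the Lee--Park argument \cite{LP07} for the family $\bS\to U$ produced in Theorem~\ref{corollar}, whose fibers include both $D_0$ and the Craighero--Gattazzo surface $S$. Shrinking $U$ around the point lying over $D_0$, we may assume $U$ is a disc, so that $\bS\to U$ is a $\Q$-Gorenstein smoothing of $D_0$; note that the only singularity of $D_0$ is the ${1\over 4}(1,1)$ point, since on the Boyd surface all other singularities of $Z$ were resolved and only $\Delta_1$ was contracted. The ${1\over 4}(1,1)$ singularity is the Wahl singularity $\tfrac{1}{2^2}(1,1)$, so the Milnor fiber $M$ of its $\Q$-Gorenstein smoothing is a rational homology ball with $\pi_1(M)=\Z/2$ and boundary $\partial M=L(4,1)$, and the inclusion $\partial M\hookrightarrow M$ induces the natural surjection $\Z/4=\pi_1(\partial M)\twoheadrightarrow\pi_1(M)=\Z/2$. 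Topologically, $S$ is obtained from the smooth locus $D_0^{\mathrm{sm}}:=D_0\setminus\{\mathrm{pt}\}$ by gluing in $M$ along $L(4,1)$, so Van Kampen's theorem gives
\[
\pi_1(S)=\pi_1\bigl(D_0^{\mathrm{sm}}\bigr)\ast_{\Z/4}\Z/2 .
\]
Hence it is enough to prove $\pi_1(D_0^{\mathrm{sm}})=1$: the amalgam then collapses because $\Z/4\to\Z/2$ is onto, giving $\pi_1(S)=1$.

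To compute $\pi_1(D_0^{\mathrm{sm}})$ I would pass to the minimal resolution $f\colon D\to D_0$, which contracts the $(-4)$-curve $\Gamma$ to the singular point and is an isomorphism elsewhere, so $D_0^{\mathrm{sm}}\cong D\setminus\Gamma$. By Proposition~\ref{DolgTh}, $D$ is a complex Dolgachev surface, whence $\pi_1(D)=1$. Applying Van Kampen to $D=(D\setminus\Gamma)\cup\nu(\Gamma)$, where the tubular neighbourhood $\nu(\Gamma)$ deformation retracts onto $\Gamma\cong\P^1$ and $\partial\nu(\Gamma)=L(4,1)$, shows that $\pi_1(D\setminus\Gamma)$ is the normal closure of the meridian $\mu$ of $\Gamma$. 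Now the $(-2)$-curve $N$ on $D$, constructed together with $\Gamma$ in Lemma~\ref{sdgsgs} and the discussion preceding Theorem~\ref{corollar}, meets $\Gamma$ transversally in a single point $q$; removing from $N$ a small meridian disc of $\Gamma$ centred at $q$ leaves a disc $N'\subset D\setminus\Gamma$ with $\partial N'=\mu$ (as $N$ meets $\Gamma$ nowhere else). Therefore $\mu=1$ in $\pi_1(D\setminus\Gamma)$, so $\pi_1(D_0^{\mathrm{sm}})=\pi_1(D\setminus\Gamma)=1$, and hence $\pi_1(S)=1$.

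The Van Kampen bookkeeping and the tracking of $N$ and its intersection number with $\Gamma$ through the flat families of Lemma~\ref{sdgsgs} (where intersection numbers are preserved) are routine. The genuinely delicate point is the local input recorded in the first paragraph: that a \emph{$\Q$-Gorenstein} smoothing of $D_0$ --- as opposed to an arbitrary deformation --- replaces a neighbourhood of the ${1\over 4}(1,1)$ point by precisely the Milnor fiber of the ${1\over 4}(1,1)$ Wahl singularity, with $\pi_1=\Z/2$ and with the standard surjection $\Z/4\twoheadrightarrow\Z/2$ on boundary fundamental groups (equivalently, that $S$ is the rational blowdown of $D$ along $\Gamma$). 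This is the heart of the Lee--Park method and relies on the structure theory of $\Q$-Gorenstein smoothings of Wahl singularities, not merely on the abstract deformation theory (Theorem~\ref{abstract} and Lemmas~\ref{asfvasfafsb}, \ref{asvsfgsfg}) used to construct the family in Theorem~\ref{corollar}.
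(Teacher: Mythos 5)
Your proof is correct and follows essentially the same route as the paper: the key step in both is that the $(-2)$-curve $N$ meets $\Gamma$ transversally in one point, so $N$ minus that point is a simply-connected surface in $D\setminus\Gamma$ bounding the meridian, which kills the normal closure of the meridian and gives $\pi_1(D\setminus\Gamma)=\pi_1(D)=1$. The only difference is that you spell out the Lee--Park gluing of the Milnor fiber of the $\tfrac14(1,1)$ smoothing (the amalgam over $\Z/4\twoheadrightarrow\Z/2$) explicitly, whereas the paper delegates this to \cite{LP07}.
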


\begin{proof}
Here we use the method of \cite{LP07}, which applies Van Kampen's Theorem and the Milnor fiber of the $\Q$-Gorenstein smoothing of $\frac{1}{4}(1,1)$. We only need $\pi_1(D \setminus \Gamma)=1$. By Van Kampen's theorem, we have $\pi_1(D) \simeq \pi_1(D \setminus \Gamma) / \overline{\langle \alpha \rangle}$ where $\alpha$ is a loop around $\Gamma$, and $\overline{\langle \alpha \rangle}$ is the smallest normal subgroup of $\pi_1(D \setminus \Gamma)$ containing $\langle \alpha \rangle$. We can and do consider $\alpha$ as given by a loop around $N$, since $N$ and $\Gamma$ intersect transversally. As $N \cdot \Gamma=1$, the set $N':=N \cap (D \setminus \Gamma)$ is simply-connected, and so $\alpha \subset N' \subset D \setminus \Gamma$ is homotopically trivial. Therefore $\overline{\langle \alpha \rangle}=1$, and so $\pi_1(D \setminus \Gamma)=1$ since by Proposition \ref{trivialfun} we have $\pi_1(D)=1$. After this, one directly applies \cite{LP07} (pages 493 and 499).
\end{proof}

\section{Genus $2$ Lefschetz fibration on a Dolgachev surface}\label{LefSec}

In Section~\ref{DefSec} we constructed a lifting of the Boyd surface $Y$ (a Dolgachev surface in characteristic~$7$)
to some Dolgachev surface $D$ in characteristic~$0$.
Using results of Section~\ref{boydgeo1} we can be much more explicit:

\begin{theorem}\label{Camped}
The Boyd surface $Y$ can be lifted to a complex Dolgachev
surface $D$ of type $2,3$, which possesses an $I_4$ fiber of multiplicity $2$,
two $(-4)$-curves, and four elliptic $(-1)$-curves $E_1,\ldots,E_4$.
This surface has a Campedelli-type description as the minimal resolution of singularities of the double cover
of $\P^1\times\P^1$ with four elliptic singularities and two $A_1$ singularities.
\end{theorem}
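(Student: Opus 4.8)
The plan is to refine the abstract lift of Lemma~\ref{sdgsgs} by also lifting the double-cover presentation of the Boyd surface to the ring of Witt vectors $\cR$ of $k$, and then to base-change to~$\C$. I would work on the surface $\P$ of Section~\ref{boydgeo1} (the triple blow-up of $Q=\P^1_k\times\P^1_k$), on which $B_1+B_2$ is simple normal crossings and the branch curve $B=B_1+B_2+\bar G_1+\ldots+\bar G_4$ is smooth. The obstruction to lifting the pair $(\P,B)$ over $\cR$ lies in $H^2(\P,T_\P(-\log B))$; from the exact sequence
$$0\to T_\P(-\log B)\to T_\P(-\log(B_1+B_2))\to\textstyle\bigoplus_{i=1}^{4}\N_{\bar G_i/\P}\to 0$$
together with the vanishing $H^2(\P,T_\P(-\log(B_1+B_2)))=0$ established in the course of proving \eqref{onp1} (and $H^2$ of a sheaf supported on curves being zero), one gets $H^2(\P,T_\P(-\log B))=0$. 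Since $\P$ is rational we also have $H^2(\P,\O_\P)=0$, so the formal lift algebraizes by Grothendieck existence to a smooth projective family $\P_\cR\to\Spec\cR$ with a smooth proper lift $B_\cR$ of $B$; the divisibility $B\sim 2L$ of \eqref{jhgjhgjhghjgj} lifts because $\Pic\P$ is torsion-free and the Néron--Severi lattice is locally constant. Blowing down the lifts of the $\bar N_i,\bar G_i,\bar E_i$ yields the canonical lift $Q_\cR=\P^1_\cR\times\P^1_\cR$ with lifted curves $B_{1,\cR},B_{2,\cR}\in|\O(3,3)|$ and lifted points $P_i,Q_j$; since the lift on $\P_\cR$ keeps $B_1+B_2$ normal crossing, the pair $(Q_\cR,B_{1,\cR}\cup B_{2,\cR})$ is equisingular at $P_1,\ldots,P_4$ and meets $\Delta$ at $Q_1,Q_2$ in the same pattern as in characteristic $7$.

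Next I would form the double cover $W_\cR\to\P_\cR$ branched along $B_\cR$ (smooth over $\cR$, since $2$ is invertible) and contract the $(-1)$-curves $G_i$ (the lifts exist, being preimages of the lifted $\bar G_i$) to obtain a smooth projective family $\Y_\cR\to\Spec\cR$ lifting the Boyd surface. Equivalently, the double cover $Z_\cR\to Q_\cR$ branched along $B_{1,\cR}\cup B_{2,\cR}$ has, by the equisingularity above and the same local computation as in characteristic~$7$, exactly four $\tilde E_8$ singularities over $P_1,\ldots,P_4$ and two $A_1$ singularities over $Q_1,Q_2$, and $\Y_\cR$ is its simultaneous minimal resolution. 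The curves $\Delta_1,\Delta_2$ (the components of the preimage of $\Delta=L\cap Q_\cR$), $N_1,N_2$ (over the $A_1$ points) and the elliptic curves $E_1,\ldots,E_4$ (over the $\tilde E_8$ points) spread out over $\cR$ with locally constant self-intersection and arithmetic genus. Setting $D:=\Y_\cR\otimes_\cR\C$ (via an embedding $\bar K\hookrightarrow\C$) then gives a complex surface carrying the $(-4)$-curves $\Delta_1,\Delta_2$, the $(-2)$-curves $N_1,N_2$, and the elliptic $(-1)$-curves $E_1,\ldots,E_4$, which is exactly the minimal resolution of the double cover $Z_\C$ of $\P^1_\C\times\P^1_\C$ with four $\tilde E_8$ and two $A_1$ singularities. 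By flatness and Lemma~\ref{DDDD7}, $K_D^2=0$, $\chi(\O_D)=1$, $p_g(D)=q(D)=0$, and $D$ inherits the elliptic fibration of $Y\to\P^1_k$; because the construction is explicit, its singular fibres are the flat limits of those in Lemma~\ref{DDDD7}, so $D$ has in particular an $I_4$ fibre of multiplicity $2$ (and one of multiplicity $3$).

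It remains to identify $D$ as a complex Dolgachev surface of type $2,3$. Contracting $\Delta_1$ in $\Y_\cR$, using vanishing of $R^1$ of the structure sheaf under this contraction together with $H^2(S_0,\O_{S_0})=0$ exactly as in Lemma~\ref{asfvasfafsb}, produces a $\Q$-Gorenstein lift $\bar\Y_\cR\to\Spec\cR$ of $S_0$ with a $\frac{1}{4}(1,1)$ singularity along a section and smooth elsewhere. Running the argument of Theorem~\ref{corollar} with $\cX_2:=\bar\Y_\cR$ (using Lemma~\ref{asvsfgsfg}) then gives a $\Q$-Gorenstein family over a smooth complex curve whose fibres include the Craighero--Gattazzo surface and $D_0$, the contraction of $\Delta_1$ in $D$; Proposition~\ref{DolgTh}, applied verbatim to this family, shows that $D$ is a complex Dolgachev surface of type $2,3$.

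The main obstacle is the middle step: arranging the double cover, the divisibility $B\sim 2L$, the $(-1)$-curve blow-downs and the $\tilde E_8$-contractions to work \emph{compatibly over} $\cR$, so that $\Y_\cR$ is genuinely a flat lift of $Y$ realizing the Campedelli presentation and the lifted branch curves remain equisingular at $P_1,\ldots,P_4$. Once this is in place, identifying $D$ as a Dolgachev surface and reading off its curve and fibre data is bookkeeping with intersection numbers and the results of the previous sections.
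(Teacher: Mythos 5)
Your overall strategy (lift the branch configuration via the $H^2$-vanishings of Section~\ref{boydgeo1}, rebuild the double cover over $\cR$, base-change to $\C$) is the paper's strategy, but there is a genuine gap in \emph{which} configuration you lift. You lift only the pair $(\P,B)$ with $B=B_1+B_2+\bar G_1+\ldots+\bar G_4$, and then assert that the lifted curves meet $\Delta$ at $Q_1,Q_2$ ``in the same pattern as in characteristic $7$.'' That is exactly the point that cannot be taken for granted: $\Delta$ is a $(1,1)$-curve through $Q_1,Q_2$ which is in addition tangent to $B_1$ at two points and to $B_2$ at two points ($Q_3,\ldots,Q_6$). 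Through the two lifted points $Q_1,Q_2$ there is only a pencil of $(1,1)$-curves, while the four tangencies impose four further conditions, so for an arbitrary lift of $(\P,B)$ no such $\Delta$ need exist. (Equivalently: the class of $\Delta_1$ lifts in $\Pic$, but $\chi(\O_Y(\Delta_1))=-2$, so its effectivity on the lift is not forced by numerics.) Without $\Delta$ there are no $(-4)$-curves $\Delta_1,\Delta_2$ on the lifted double cover, the statement's ``two $(-4)$-curves'' fails, and your final step --- contracting $\Delta_1$ and re-running Theorem~\ref{corollar} and Proposition~\ref{DolgTh} --- cannot start. The paper avoids this precisely by lifting the full configuration $\Delta+B_1+B_2+\sum\bar N_i+\sum\bar G_i+\sum\bar E_i+\sum\bar C_i+\sum\bar F_i$ on $\P_1$ (the $\bar C_i,\bar F_i$ encode the tangencies of $\Delta$ with $B_1\cup B_2$), which rests on the stronger vanishing $H^2(\P_1,T_{\P_1}(-\log(\Delta+B_1+B_2)))=0$ --- the main result of Section~\ref{boydgeo1}, requiring the surjectivity of \eqref{issurjective?} and the Macaulay2 computation --- and not merely on \eqref{onp1}, which is all you invoke.

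Two further points. First, your exact-sequence argument for $H^2(\P,T_\P(-\log B))=0$ does not close as written: on $\P$ the curves $\bar G_i$ have self-intersection $-2$, so $H^1(\N_{\bar G_i/\P})=H^1(\P^1,\O(-2))\ne 0$, and the long exact sequence requires surjectivity of $H^1(T_\P(-\log(B_1+B_2)))\to\bigoplus H^1(\N_{\bar G_i/\P})$, not vanishing of $H^2$ of the quotient; this is what the $(-2)$-curve principle of \cite{PSU} supplies. Second, even granting the lift, the multiplicity-$2$ $I_4$ fiber is not obtained by ``flat limits'' (specialization degenerates fibers, not the reverse); the paper instead exhibits the elliptic fibration on the lift explicitly, via a curve $\Gamma\in|\O(2,2)|$ through the four lifted points $P_i$ with prescribed tangent directions ($9$ sections, $8$ conditions) and the relations \eqref{mmm}, \eqref{mmmm}, which give the multiple fibers of multiplicities $3$ and $2$ directly --- a far lighter argument than rerunning the degeneration to the Craighero--Gattazzo surface and the fundamental-group analysis of Proposition~\ref{DolgTh}.
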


\begin{proof}
In Section~\ref{boydgeo1}, the main point was to prove that $$H^2(\P_1,T_{\P_1}(-\log(\Delta+B_1+B_2)))=0.$$ By applying the $(-1)$ and $(-2)$ principles as before, we have $$H^2\big(\P_1,T_{\P_1}(-\log(\Delta+B_1+B_2 +\sum\bar N_i+ \sum\bar G_i + \sum\bar E_i + \sum\bar C_i + \sum \bar F_i))\big)=0.$$ By Lemma \ref{ajhsdbafhjsfb}, preservation of intersection numbers, and $H^2(\P_1,\O_{\P_1})=0$, we have that the configuration of curves $\Delta+B_1+B_2 +\sum\bar N_i+ \sum\bar G_i + \sum\bar E_i + \sum\bar C_i + \sum \bar F_i$ exists in $\P_1$ over $\C$. We will use the same notation as in char $7$. Then, by contracting $\sum\bar N_i+ \sum\bar G_i + \sum\bar E_i + \sum\bar C_i + \sum \bar F_i$, we obtain curves $\Delta+B_1+B_2$ in $\P_{\C}^1 \times \P_{\C}^1$ with the corresponding singularities. In this way, we can check that $\Delta \sim (1,1)$ and $B_i \sim (3,3)$ in Pic$(\P^1 \times \P^1)$. Notice that the two singularities of $B_1$ and the two singularities of $B_2$ may not be located at the special position we had in char $7$. Let us call these points $P_1,\ldots,P_4$ as before.

The linear system $|\O(2,2)|$ contains a member, which we call $\Gamma$,
that passes through $P_1,\ldots,P_4$ with the direction of the tangent cone to $B_1 \cup B_2$.
Indeed, $\Gamma$ exists because $h^0(\P^1\times\P^1, \O(2,2))=9$ and passing through
 $4$ points with $4$ given directions imposes $8$ conditions.

Then one easily checks in $\P_1$ that
\begin{equation}\label{mmm}
B_1 + B_2 + 2\bar N_1 + 2\bar N_2
\sim  3 \Gamma
\end{equation}
as well as
\begin{equation}
 B_1 + B_2 + \sum_{i=1}^4\bar G_i \sim 2 \Big( 3 \sigma^*(\Delta)- \bar N_1-\bar N_2-3\sum_{i=1}^4\bar E_i - \sum_{i=1}^4\bar G_i\Big).
 \label{mmmm}\end{equation}

In this way, \eqref{mmm} gives an elliptic fibration $\P_1 \to \P_{\C}^1$ with one multiple fiber $\Gamma$ of multiplicity $3$, and \eqref{mmmm} gives a double cover $W \to \P_1$ of $\P_1$ branched along $B_1 + B_2 + \sum_{i=1}^4\bar G_i$, just as before. Again the pre-images of $\bar G_1, \ldots, \bar G_4$ give $(-1)$-curves in $W$, which we contract to obtain a surface $D$. Using the standard formulas for double covers, as before, we get $K_D^2=0$ and $\chi(\O_D)=1$. Also, we can directly compute $p_g(D)=0$ using the defining line bundle of the double cover, and so $q(D)=0$.
The pull-back of the elliptic fibration $\P_1 \to \P_{\C}^1$ gives an elliptic fibration $D \to \P_{\C}^1$ with two multiple fibers: the pre-images of $\Gamma$ and $B_1+B_2+\bar N_1 + \bar N_2$, with multiplicities $3$ and $2$ respectively. The two $(-4)$-curves are pre-images of $\Delta$.
\end{proof}


Moreover, we notice that the pull-backs of the two rulings of $\P_{\C}^1 \times \P_{\C}^1$ give two distinct genus two fibrations $D \to \P_{\C}^1$. 

\begin{theorem}\label{Lefschetz}
There exist Dolgachev surfaces (with multiple fibers of multiplicity~$2,3$) which carry genus~$2$ Lefschetz fibrations, specifically genus~$2$ fibrations without multiple components in fibers and such that the only singularities of fibers are nodes.
\end{theorem}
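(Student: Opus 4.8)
The plan is to take one of the two genus~$2$ fibrations $D\to\P^1_\C$ produced just above --- say the pull-back of a ruling of $\P^1_\C\times\P^1_\C$ under the double cover $D\to W\to\P_1\to\P^1_\C\times\P^1_\C$ --- and to perturb the branch curves $B_1\cup B_2$ (and the ambient configuration) so that the resulting genus~$2$ fibration becomes Lefschetz, i.e.~has only nodal fibers and no multiple fibre components. Concretely, a fibre of this fibration over a point $q\in\P^1_\C$ is the double cover of the corresponding ruling $\ell_q\simeq\P^1$ branched along $(B_1\cup B_2)\cap\ell_q$ (plus the $\bar G_i$-branching, which I will arrange to lie on finitely many fibres and handle separately). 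Such a double cover acquires a node exactly when two branch points collide simply (a vertical tangency of $B_1\cup B_2$), and degenerates worse when three or more branch points collide or when $\ell_q$ is tangent to a branch curve to higher order, or when a fibre contains one of the distinguished curves $\Delta$, $\bar N_i$, $\bar G_i$, $E_i$. So the statement reduces to: after a small deformation inside the family of $(3,3)+(3,3)$ branch configurations with the prescribed tacnodal singularities at $P_1,\dots,P_4$, the branch locus has only simple vertical tangencies, and the finitely many ``bad'' vertical fibres (those through $P_i$ and the tacnode directions, and through the $\bar G_i$) also contribute only nodes.

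First I would set up the deformation space: by Theorem~\ref{Camped} we already have the complex configuration $\Delta+B_1+B_2+\sum\bar N_i+\sum\bar G_i+\sum\bar E_i+\sum\bar C_i+\sum\bar F_i$ in $\P_1$, obtained by lifting from characteristic~$7$ via $H^2(\P_1,T_{\P_1}(-\log(\cdots)))=0$. I would instead lift (or just directly consider in $\P^1_\C\times\P^1_\C$) the smaller configuration consisting of $B_1\cup B_2$ with four tacnodes at movable points $P_1,\dots,P_4$, together with its tangent-cone data, and argue that this configuration deforms in a positive-dimensional family --- this is exactly the kind of equisingular-deformation computation already carried out in Section~\ref{boydgeo1} (the $28+12$ linear equations and the Macaulay2 check), so I can quote that solutions form a $3$- or $4$-dimensional space and that generic such configurations have no further singularities beyond the four tacnodes. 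Second, I would run a Bertini/generic-smoothness argument on the ``vertical tangency'' and ``triple point'' loci: in the total space of this deformation family, the locus of configurations for which some vertical line is tangent to $B_1\cup B_2$ with multiplicity $\ge 3$ (or simultaneously tangent to both branches, or passing through a tacnode with a bad direction) is of positive codimension, so a general member has only simple vertical tangencies away from the $P_i$. Third, I would analyze the finitely many special fibres --- those through $P_i$, through the chosen tangent-cone direction, and through the images of $\bar G_i$ --- directly: a tacnode of the branch curve on a vertical line, after the relevant blow-ups and the double cover, contributes a fibre which can be made nodal (this is essentially the same local analysis as in Lemma~\ref{tacnode}, and one chooses the direction of $P_i$ generic so that the vertical fibre through $P_i$ is not tangent to the branch curve), and the $\bar G_i$-branching can be pushed, by a further small deformation of $\Gamma$ and of the $P_i$, onto fibres already containing a tacnode or onto otherwise harmless fibres. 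Finally, I would check that none of these deformations introduces a multiple fibre component: multiplicities $2$ and $3$ of the two elliptic multiple fibres are preserved (they are topological), and a genus~$2$ fibre with a multiple component would force a non-reduced vertical branch locus, which the genericity arguments exclude; hence the genus~$2$ fibration on the deformed $D$ is a genuine Lefschetz fibration, while $D$ remains a Dolgachev surface of type $(2,3)$ by the same bookkeeping as in Theorem~\ref{Camped}.

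The main obstacle I expect is the third step: controlling the special fibres through $P_1,\dots,P_4$ and the $\bar G_i$-points. A tacnode of the branch divisor on a vertical line does not automatically give a nodal fibre of the double cover --- one must choose the deformation so that each $P_i$ lies on a vertical line that is \emph{transverse} to both local branches of $B_1\cup B_2$ there (so that, after resolving, the induced singular fibre is an irreducible nodal curve rather than something with a multiple component or a worse singularity), and simultaneously keep the four tacnodes and the global $(3,3)+(3,3)$ constraints. Showing there is enough room in the $3$- or $4$-dimensional equisingular family to do this for all four points at once, and to move the four $\bar G_i$-branch-fibres into good position, is the delicate point; I would do it by exhibiting one explicit configuration with the desired properties (a ``general position'' representative, checkable by a short Macaulay2 computation analogous to the one in Section~\ref{boydgeo1}) and invoking openness of the Lefschetz condition. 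The remaining verifications --- invariance of $K_D^2=0$, $\chi(\O_D)=1$, $p_g=q=0$, and the multiplicities $2,3$ --- are routine double-cover formulae, identical to those in the proof of Theorem~\ref{Camped}.
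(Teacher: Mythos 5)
Your overall strategy---perturb the branch configuration $B_1\cup B_2$ so that the ruling-induced genus-$2$ fibration has only reduced nodal fibers, using the equisingular deformation machinery and the unobstructedness results of Section~\ref{boydgeo1}---is the same as the paper's, but the execution has a gap, and the difficulty is located in the wrong place. The fibers through $P_1,\ldots,P_4$ (and hence the $\bar G_i$-branching), which you flag as the ``main obstacle,'' are already harmless in the actual configuration: they are the reduced fibers $E_1\cup A_1\cup E_4$ and $E_2\cup A_2\cup E_3$, whose only singularities are the nodes where the elliptic $(-1)$-curves meet the rational $(-2)$-curves $A_i$, so no deformation is needed there. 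The genuine obstruction, which your proposal never isolates, sits over $Q_1$ and $Q_2$: for the ruling $\beta=x/t$ the degree-$3$ projection $B_1\to\P^1$ has branch divisor $(\beta^2+1)^2=0$, i.e.\ $B_1$ has \emph{flex} tangency with the rulings through $Q_1,Q_2$, and the resulting two fibers each consist of a nodal rational curve together with $N_i$ passing through the node---a simple triple point, not a node. A correct proof must identify these two fibers and remove exactly this degeneracy.

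The second problem is that your Bertini step (``the locus of configurations with a vertical tangency of multiplicity $\ge 3$ has positive codimension, so a general member has only simple tangencies'') is asserted rather than proved, and here it cannot be taken for granted: the relevant family is cut out by the linear conditions of Section~\ref{boydgeo1} preserving the four tacnodes and the incidences along $\Delta$, and it is a priori possible that every member of this small constrained family retains the flexes at $Q_1,Q_2$ (the characteristic-$7$ member certainly does). The paper resolves this by an explicit computation: the ideal \texttt{Lefschetz} in the Appendix exhibits a first-order equisingular deformation keeping the four tacnodes and keeping $Q_1,Q_2$ on $B_1$, while making the derivative of the local equation of $B_1$ along the vertical fibers at $Q_1,Q_2$ nonzero; unobstructedness (Theorem~\ref{julie} together with Lemma~\ref{ajhsdbafhjsfb}) then integrates this to a lifting over $\Spec\cR$ whose generic fiber has no flex, and for that ruling the fibration becomes Lefschetz. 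Your plan does gesture at ``a short Macaulay2 computation,'' but aims it at the $P_i$-fibers; redirected at the flex tangencies over $Q_1,Q_2$, your argument becomes the paper's proof.
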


\begin{proof}

In characteristic $7$, we have two genus two fibrations on the Boyd surface induced by the two rulings in $\P^1 \times \P^1$. We first want to find out the singular fibers of these fibrations. For that, we need to look at the induced morphisms $B_i \subset \P \to \P^1 \times \P^1 \to \P^1$ for each $i$ and for each ruling.

Using the equations \eqref{b1eqqqq} and \eqref{b2eqqqq} of $B_1$ and $B_2$ respectively, we obtain that, for the ruling $\beta=x/t$, the morphism $B_1 \to \P^1$ has branch points at $\beta$ satisfying $(\beta^2+1)^2=0$, and the morphism $B_2 \to \P^1$ is branched at $\beta$ satisfying $\beta^4+4 \beta^2+1=0$. One verifies that in the first case, the points of ramification are $Q_1=(-i,i)$ and $Q_2=(i,-i)$, and $B_1$ is tangent to the ruling with flex points at $Q_1$ and $Q_2$. For the second ruling, the roles of $B_1$ and $B_2$ are interchanged in relation to ramification, and $B_2$ is tangent to the ruling with flex points at $Q_1$ and $Q_2$ for $B_2$.

Using the previous observations on the ramification points of $B_1 \to \P^1$ and $B_2 \to \P^1$, we obtain the following singular fibers for the genus~$2$ fibrations $Y \to \P^1$ (we take it from one ruling, the other is analogous):

\begin{itemize}
\item[(1)] two reduced singular fibers consisting of $E_1 \cup A_1 \cup E_4$ and $E_2 \cup A_2 \cup E_3$ where $E_i$ are disjoint elliptic $(-1)$-curves, and $A_i$ are $(-2)$ rational curves, each intersecting two $E_j$ at one nodal point.

\item[(2)] two reduced singular fibers over $\beta=i,-i$ consisting of one nodal rational curve together with $N_1$, and another rational nodal curve with $N_2$. Each of the $N_i$ passes through the corresponding node, forming a simple triple point for the fiber.

\item[(3)] four reduced singular curves, each consisting of a nodal curve whose resolution is an elliptic curve.
\end{itemize}

We claim that there exists a lifting of this Dolgachev surface to characteristic $0$ as in Theorem~\ref{Camped} such that case (2) is eliminated.
In other words, we have to construct a lifting
of $\P_1$ together with the curves $\Delta+B_1+B_2 +\sum\bar N_i+ \sum\bar G_i + \sum\bar E_i + \sum\bar C_i + \sum \bar F_i$ such that the flex ramification points for $B_1 \to \P^1$ disappear, becoming simple ramification for a degree $3$ morphism $B_1 \to \P_{\C}^1$. Using the Macaulay2 code in the Appendix, we show the existence of a first other deformation of that type. This together with unobstructed deformations, as in the remark above, gives a lifting to $\Spec\cR$
such that, over the generic point, the curve $B_1$ is not flex with respect to any ruling. In this way, at least for one ruling, the corresponding genus~$2$ fibration on the complex $2,3$ Dolgachev surface has only singular fibers which are reduced and with nodes as singularities, i.e. it is a Lefschetz fibration.
\end{proof}

\section*{APPENDIX}

This appendix contains the Macaulay2 source code used to compute the rank of matrices in the proof of Theorem~\ref{julie}
and Theorem~\ref{Lefschetz}.
\footnotesize{
\begin{verbatim}

--For simplicity, this includes the extra variable y.
R=ZZ/7[t,c1,c2,c3,c4,d1,d2,d3,d4,a20,a21,a31,a02,a12,a03,b10,b11,b22,b32,b23
,x,y];

--Adjoin a square root of -1:
R1=R/(t^2+1)

-- Twenty-one of the restrictions on coefficients arising from forcing desired
singularities at the points to which P1, P2, P3, P4 deform. These allow us to
reduce the number of variables from 40 to 19.

a33=0; b33=d1-3*c1; a32=-3*c1-6*d1; a23=2*c1-3*d1;
a22=2*a31+4*b23-2*b32; a13=2*a31-2*b32+4*b23;
a30=2*c1-d1+3*a12+2*a21+a03; b30=0; b31=3*c2+2*d2; b20=3*d2+c2;
b21=6*b10+6*a31+3*a20; b00=-c2-2*d2-2*b11-4*b22-b33;
b03=0; b02=2*d3+3*c3; b13=3*d3+c3; b01=5*a13+2*b23+3*a02;
b12=4*a13+6*b23+a02; a00=0; a01=3*c4+6*d4; a10=3*d4-2*c4;
a11=a02+4*b01+6*b10;

--The intersection of $\bar{B}_1$ and $\bar{B}_2$ with $\Delta$
(Here, x=$\beta$):

g1bar= (1+x)^3*(a00+a01*x+a02*x^2+a03*x^3)
+(1+x)^2*(1-x)*(a10+a11*x+a12*x^2+a13*x^3)
+(1+x)*(1-x)^2*(a20+a21*x+a22*x^2+a23*x^3)
+(1-x)^3*(a30+a31*x+a32*x^2+a33*x^3);

g2bar= (1+x)^3*(b00+b01*x+b02*x^2+b03*x^3)
+(1+x)^2*(1-x)*(b10+b11*x+b12*x^2+b13*x^3)
+(1+x)*(1-x)^2*(b20+b21*x+b22*x^2+b23*x^3)
+(1-x)^3*(b30+b31*x+b32*x^2+b33*x^3);

--The derivatives of g1bar and g2bar:
dg1bar= diff(x, g1bar);

dg2bar=diff(x, g2bar);

-- B1 and B2 pass through Q1 and Q2:
B1Q1=sub(g1bar, x=>t); B1Q2=sub(g1bar, x=>-t);

B2Q1=sub(g2bar, x=>t); B2Q2=sub(g2bar, x=>-t);

-- B1 passes through Q3 (x=-2+4i), Q4 (x=-2-4i):
B1Q3=sub(g1bar, x=>-2+4*t); B1Q4=sub(g1bar, x=>-2-4*t);

-- B2 passes through Q5, Q6;
B2Q5=sub(g2bar, x=>-3+5*t); B2Q6=sub(g2bar, x=>-3-5*t);

-- B1 is tangent at Q3, Q4:
dB1Q3=sub(dg1bar, x=>-2+4*t); dB1Q4=sub(dg1bar, x=>-2-4*t);

-- B2 tangent at Q5, Q6
dB2Q5=sub(dg2bar, x=>-3+5*t); dB2Q6=sub(dg2bar, x=>-3-5*t);

-- Each of the following ideals gives the kernel of one of the seven the
systems of linear equations. Notice that in each, we include the
remaining seven restrictions arising from forcing desired singularities
at the points to which P1, P2, P3, P4 deform.

--move Q4 off Delta, moving Q5, Q6 along, keeping tangent direction
at Q3, Q5, Q6

I1=ideal(a30+c2+3*d2, b32-2*b10-4*a31-2*a20,  a03-c3-3*d3,
c3+2*d3-3*b11+b33+2*b22+b00, b00-d4+3*c4,
a20-2*a02-2*b01-3*b10,4*c4+5*d4+2*a03+3*a12+a21+5*a30,
B1Q1, B1Q2, B2Q1, B2Q2, B1Q3, dB1Q3, B1Q4-1,B2Q5, B2Q6);

--move Q5 off Delta, moving Q4, Q6 along, keeping tangent direction
at Q3, Q4, Q6:

I2=ideal(a30+c2+3*d2, b32-2*b10-4*a31-2*a20,  a03-c3-3*d3,
c3+2*d3-3*b11+b33+2*b22+b00, b00-d4+3*c4,
a20-2*a02-2*b01-3*b10,4*c4+5*d4+2*a03+3*a12+a21+5*a30,
B1Q1, B1Q2, B2Q1, B2Q2, B1Q3, dB1Q3, B1Q4, B2Q6,B2Q5-1) ;

--move Q6 off Delta, moving Q4, Q5 along, keeping tangent direction
at Q3, Q4, Q5:

I3=ideal(a30+c2+3*d2, b32-2*b10-4*a31-2*a20,  a03-c3-3*d3,
c3+2*d3-3*b11+b33+2*b22+b00, b00-d4+3*c4,
a20-2*a02-2*b01-3*b10,4*c4+5*d4+2*a03+3*a12+a21+5*a30,
B1Q1, B1Q2, B2Q1, B2Q2, B1Q3, dB1Q3, B1Q4,B2Q5, B2Q6-1)


--leave all points on Delta, moving Q4, Q5, Q6 along, change tangent
direction at Q3:

I4=ideal(a30+c2+3*d2, b32-2*b10-4*a31-2*a20,  a03-c3-3*d3,
c3+2*d3-3*b11+b33+2*b22+b00, b00-d4+3*c4,
a20-2*a02-2*b01-3*b10,4*c4+5*d4+2*a03+3*a12+a21+5*a30,
B1Q1, B1Q2, B2Q1, B2Q2, B1Q3, B1Q4, dB1Q3-1,B2Q5, B2Q6)

--leave all points on Delta, moving Q5, Q6 along, change tangent
direction at Q4, keep tangent direction at Q3:

I5=ideal(a30+c2+3*d2, b32-2*b10-4*a31-2*a20,  a03-c3-3*d3,
c3+2*d3-3*b11+b33+2*b22+b00, b00-d4+3*c4,
a20-2*a02-2*b01-3*b10,4*c4+5*d4+2*a03+3*a12+a21+5*a30,
B1Q1, B1Q2, B2Q1, B2Q2, B1Q3, B1Q4, dB1Q4-1,B2Q5, B2Q6,
dB1Q3)

--leave all points on Delta, moving Q4, Q6 along, change tangent
direction at Q5, keep tangent direction at Q3:

I6=ideal(a30+c2+3*d2, b32-2*b10-4*a31-2*a20,  a03-c3-3*d3,
c3+2*d3-3*b11+b33+2*b22+b00, b00-d4+3*c4,
a20-2*a02-2*b01-3*b10,4*c4+5*d4+2*a03+3*a12+a21+5*a30,
B1Q1, B1Q2, B2Q1, B2Q2, B1Q3, B1Q4, dB2Q5-1,B2Q5, B2Q6,
dB1Q3)

--leave all points on Delta, moving Q4, Q5 along, change tangent
direction at Q6, keep tangent direction at Q3:

I7=ideal(a30+c2+3*d2, b32-2*b10-4*a31-2*a20,  a03-c3-3*d3,
c3+2*d3-3*b11+b33+2*b22+b00, b00-d4+3*c4,
a20-2*a02-2*b01-3*b10,4*c4+5*d4+2*a03+3*a12+a21+5*a30,
B1Q1, B1Q2, B2Q1, B2Q2, B1Q3, B1Q4, dB2Q6-1,B2Q5, B2Q6,
dB1Q3)

-- Check the dimension of each ideal (note: each has one less
dimension that Macaulay2 gives, because of the extra variable y)

-- four-dimensional
dim1=dim(I1); dim2=dim(I2); dim3=dim(I3); dim4=dim(I4);

--three-dimensional:
dim5=dim(I5); dim6=dim(I6); dim7=dim(I7);

-- The remaining code is used to prove the existence of the Lefschetz
fibration. In particular, we prove existence of a deformation of B1+B2
so that B1+B2 maintains its singularities at P1,..., P4 and so that B1
is no longer tangent to the fiber x=i or x=-i at Q1, Q2:

-- $\bar{B}_1$ and $\bar{B}_2$ (alpha=y, beta=x):
g1bar= (a00+a01*x+a02*x^2+a03*x^3)
+y*(a10+a11*x+a12*x^2+a13*x^3)
+y^2*(a20+a21*x+a22*x^2+a23*x^3)
+y^3*(a30+a31*x+a32*x^2+a33*x^3);

g2bar=(b00+b01*x+b02*x^2+b03*x^3)
+y*(b10+b11*x+b12*x^2+b13*x^3)
+y^2*(b20+b21*x+b22*x^2+b23*x^3)
+y^3*(b30+b31*x+b32*x^2+b33*x^3);

-- Writing the local equation of B1 along the fiber at Q1 and Q2:
B1Q1bar=sub(sub(g1bar, x=>t), y=>y-t);
B1Q2bar=sub(sub(g1bar, x=>-t), y=>y+t);

-- These ensure B1 vanishes at Q1 and Q2:

van1=sub(B1Q1bar, y=>0); van2=sub(B1Q2bar, y=>0);

-- These (when nonzero) force B1 to be no longer tangent to the fiber
x=i, x=-i at Q1, Q2:

dB1Q1=diff(y, B1Q1bar); dB1Q2=diff(y, B1Q2bar);


Lefschetz=ideal(a30+c2+3*d2, b32-2*b10-4*a31-2*a20,
a03-c3-3*d3, c3+2*d3-3*b11+b33+2*b22+b00, b00-d4+3*c4,
a20-2*a02-2*b01-3*b10,4*c4+5*d4+2*a03+3*a12+a21+5*a30,
van1, van2, dB1Q1-1, dB1Q2-1)

dimL= dim(Lefschetz); -- 10-dimensional
\end{verbatim}
}


\end{document}